\newtheorem{theorem}{Theorem}
\newtheorem{proposition}{Proposition}
\newtheorem{lemma}{Lemma}
\newtheorem{corollary}{Corollary}
\newtheorem{assumption}{Assumption}
\DeclareMathOperator*{\argmax}{arg\,max}
\DeclareMathOperator*{\argmin}{arg\,min}
\def\thm@space@setup{%
    \thm@preskip=6pt plus 2pt minus 2pt 
    \thm@postskip=1pt plus 2pt minus 2pt 
}
\title{\textbf{Regularity of the score function in generative models}}
\author{Arthur St\'ephanovitch}
\affil{\small D\'epartement de Math\'ematiques et Applications\\
       Ecole Normale Sup\'erieure, Université PSL, CNRS\\
       F-75005 Paris, France,\\ \texttt{stephanovitch@dma.ens.fr}}
\date{}
\begin{document}

\maketitle

\begin{abstract}
\noindent We study the regularity of the score function in score-based generative models and show that it naturally adapts to the smoothness of the data distribution. Under minimal assumptions, we establish Lipschitz estimates that directly support convergence and stability analyses in both diffusion and ODE-based generative models. In addition, we derive higher-order regularity bounds, which simplify existing arguments for optimally approximating the score function using neural networks.
\end{abstract}
\tableofcontents

\section{Introduction}
\subsection{Score-based generative modeling}

Generative models aim to learn complex probability distributions from data, allowing for the generation of realistic samples that resemble the underlying distribution. In recent years, a class of generative models known as \emph{score-based generative models} (SGMs) or \emph{diffusion models} \citep{song2020score} has gained significant attention due to their impressive empirical performance in image \citep{dhariwal2021diffusion}, audio \citep{kong2020diffwave}, and scientific data generation tasks \citep{hoogeboom2022equivariant}. These models rely on modeling and approximating the score function---i.e., the gradient of the log-density---of a distribution evolving through a stochastic process.

The key idea behind score-based diffusion models is to define a \emph{forward process} that gradually transforms the data distribution into a simple reference distribution through a stochastic differential equation (SDE). This process is often given by an Itô SDE of the form:
\begin{equation}\label{eq:generalforward}
\mathrm{d} \overrightarrow{X}_t
=
- \lambda \gamma_t \overrightarrow{X}_t \mathrm{d} t 
+
\sqrt{2 \gamma_t }\sigma \mathrm{d} B_t, \quad X_0 \sim p,
\end{equation}
where \( p \) is the data distribution, $\sigma > 0$ a diffusion coefficient, $\lambda > 0$ a drift coefficient  and  $\gamma : \mathbb{R}_{\geq 0} \to \mathbb{R}_{>0}$ a deterministic locally integrable noise schedule such that $\int_0^\infty \gamma_s \mathrm{d} s = \infty$. Given a final time $T>0$, one can constructs a \emph{backward process}: 
\begin{equation}\label{eq:truebackward}
\mathrm{d}\overleftarrow{X}_t = \Big(\lambda\gamma_{T-t} \overleftarrow{X}_t  + ( \sigma^2 \gamma_{T-t} + b_t^2 ) \nabla \log \overrightarrow{p}_{T-t}(\overleftarrow{X}_t)\Big) \mathrm{d}t + \sqrt{2}b_t \, \mathrm{d}\overline{B}_t,\quad \overleftarrow{X}_0 \sim \overrightarrow{p}_T,
\end{equation}
where \( \overrightarrow{p}_t \) is the law of $\overrightarrow{X}_t$,  $b : [0,T] \to \mathbb{R}_{\geq 0}$ is a deterministic  backward diffusion schedule and \( \overline{B}_t \) a Brownian motion independent of $B_t$. The idea behind this backward process is that under minimal conditions, it can be shown that the law of $
X_{T-t}
$ is $\overrightarrow{p}_t$ \citep{ANDERSON1982313}. Therefore, if one can approximate the score function
\begin{equation}\label{eq:scfunc2}
    s^\star(t,x):= \nabla \log \overrightarrow{p}_t(x).
\end{equation}
by a certain $\hat{s}(x,t)$,
and if $\overrightarrow{p}_T$ is close enough to a certain $\overrightarrow{p}_\infty$ being an easy-to sample distribution, one can generate new approximate samples by numerically approximating the equation
\begin{equation}\label{eq:theestimator}
\mathrm{d}Y_t = \Big(\lambda\gamma_{T-t} Y_t  + ( \sigma^2 \gamma_{T-t} + b_t^2 ) \hat{s}_{T-t}(Y_t)\Big) \mathrm{d}t + \sqrt{2}b_t \, \mathrm{d}\overline{B}_t,\quad Y_0 \sim \overrightarrow{p}_\infty.
\end{equation}
This general formulation encompasses several popular models. For instance, \emph{Denoising Diffusion Probabilistic Models} (DDPM) take \( b_t = \sigma \), while  \emph{Probability Flow ODE} \citep{song2020score}, use \( b_t = 0 \). Other sampling schemes interpolate between these two extremes by choosing hybrid schedules for \( b_t \)  \citep{karras2022elucidating}.

Recent theoretical advances have analyzed the statistical and computational convergence of SGMs. Several works---including \cite{lee2023convergence, chen2022sampling,pmlr-v202-chen23q,wibisono2024optimal} in the case of diffusion, and \cite{chen2023probability,benton2023error,huang2024convergence} in the case of ODE---provide convergence guarantees for score-based models under regularity conditions on the score function. These include global \emph{Lipschitz continuity} and \emph{one-sided Lipschitz} conditions (bound only on the largest eigenvalue of the Jacobian), which enable the application of refined Grönwall-type inequalities for bounding the divergence between trajectories of different SDEs.

Despite these advances, the precise regularity properties of the score function remain underexplored. Understanding and quantifying the \emph{Lipschitz regularity} of the score function is not only crucial for guaranteeing the convergence of diffusion samplers but also for designing numerically stable and sample-efficient algorithms. This paper addresses this gap by rigorously analyzing the Lipschitz behavior of the score function with respect to time under minimal assumptions on the data distribution $p$.

In addition to these Lipschitz estimates, we also provide bounds on higher-order derivatives of the score, showing that the regularity of the score adapts to the smoothness of the underlying data distribution. In particular, we establish Hölder-type bounds on successive derivatives, which reflect how the forward flow progressively amplifies the regularity of the initial log  density. These results are then leveraged to obtain significantly shorter  proofs that the score function can be efficiently approximated using neural networks.

\subsection{Previous works and contributions}
Recent works \citep{brigati2024heat,gentiloni2025beyond,stephanovitch2024smooth} have investigated the regularity properties of the score function in score-based generative models.  In particular, \cite{brigati2024heat} provides sharp dimension-free estimates on the Jacobian of the score function for log-Lipschitz perturbations of log-concave measures. Their analysis is used to establish the Lipschitz regularity of the transport map associated to the heat flow.

In this work, we aim to extend these results in two directions. First, in Theorem \ref{theo:mainlipreguofscore} we relax the regularity assumptions on the data distribution by allowing log-\( \beta \)-Hölder perturbations for any \( \beta \in (0,1] \). This result allows in particular to obtain in Proposition \ref{prop:liptran} that the transport map built by the probability flow ODE is Lipschitz. Second, in the case where the data distribution has a Gaussian-type behavior at infinity, we show in Corollary \ref{coro:lipschitzbound} that the estimates on the operator norm of the Jacobian of the score can actually be improved (in terms of time dependency).

The work by \citet{gentiloni2025beyond} also derives Lipschitz estimates on the score function. However, their analysis relies on stronger structural assumptions on the data distribution. In particular, they assume that the gradient of the log-density satisfies a one-sided Lipschitz condition, which excludes important cases such as compactly supported measures.

On the other hand, \cite{stephanovitch2024smooth} establishes high-order regularity results for the score function and the associated transport map in the Langevin flow setting. The analysis shows that, under sufficient smoothness of the data distribution, the score inherits this regularity, allowing for control over higher-order derivatives of the transport. Specifically, it is considered data distributions that are either log-Hölder deformations of the standard Gaussian or compactly supported within the unit ball with prescribed decay near the boundary. Under the assumption that the data distribution is \( \beta \)-Hölder (possibly with \( \beta > 1 \)), they derive estimates in the \( \mathcal{H}^{\beta+1} \) norm.

In contrast, our approach operates under weaker assumptions. We extend the unit-ball-supported case to distributions supported on arbitrary convex compact sets, and in the full-support setting, we allow for a wider class of decay at infinity. In Theorem \ref{theo:highregu}, we establish regularity estimates on the score function in all \( \mathcal{H}^\gamma \) norms, for any \( \gamma \geq 0 \). Furthermore, in Corollary \ref{coro:diffusregutime} we derive bounds on the time regularity of the score function, thereby capturing a broader and more flexible notion of regularity in both space and time.

Moreover, the adaptivity of the score to the data regularity enables notably simpler proofs of its efficient approximability by neural networks. In particular we generalize Lemma B.5 and Lemma 3.6 of \cite{oko2023diffusion} in Corollaries~\ref{coro:approxnn} and~\ref{coro:approxnn2}.
 
\subsubsection*{Organisation of the paper}
The remainder of the paper is structured as follows.

\begin{itemize}
    \item In Section \ref{sec:preliandnot}, we introduce the main notations and definitions used throughout the paper.
    
    \item In Section \ref{sec:mainresults}, we present our main results on the Lipschitz-type regularities. We begin in Section~\ref{sec:onesidedL} with bounds on the largest eigenvalue of the Jacobian of the score, and in Section~\ref{sec:Lr} we establish full Lipschitz regularity under additional assumptions. Section~\ref{sec:ltm} then discusses implications for the Lipschitz continuity of the transport map constructed via the probability flow ODE.
    
    \item In Section \ref{sec:hiordersec} we present our main result on the higher regularity of the score. In Sections~\ref{sec:hor}, we prove higher-order regularity estimates for both space and time in Hölder spaces. Then, in Section \ref{sec:appnn} we apply these results to derive efficient neural network approximations of the score function.
    
    \item Section~\ref{sec:theo:mainlipreguofscore} is devoted to the proof of Theorem~\ref{theo:mainlipreguofscore}, which establishes our main result on one-sided Lipschitz regularity of the score function. The detailed proofs of the auxiliary results used in this section, along with the derivation of the remaining Lipschitz estimates, are provided in Appendix~\ref{sec:detailpt1}.

    \item Appendix \ref{sec:theo:highregu} contains the proof of Theorem~\ref{theo:highregu} and Corollary \ref{coro:diffusregutime} on higher-order regularity. In particular, we derive space-time Hölder bounds on the score and prove the approximation results with neural networks.
\end{itemize}

\subsection{Preliminaries and notations}\label{sec:preliandnot}
A key property of the forward process \( \overrightarrow{X}_t \), defined as the solution to equation~\eqref{eq:generalforward}, is that its law admits an explicit expression. Specifically, if \( X_0 \sim p \) and \( Z \sim \mathcal{N}(0, I_d) \) are independent, then the marginal distribution of \( \overrightarrow{X}_t \) satisfies
\begin{align}\label{align:lawofxt}
    \overrightarrow{X}_t \sim e^{-\lambda u_t}X_0 + \sigma_t Z
    ,
\end{align}
where 
\begin{align}
\label{eq:sigma_t}u_t := \int_0^t \gamma_s \mathrm{d} s \quad \text{ and } \quad 
 \sigma_t^2
 := 
\frac{\sigma^2}{\lambda}
\left(
1 - e^{-2\lambda u_t}
\right)
.
\end{align}
To lighten the notation and analysis, we will consider $p_t$ as the law of $X_t$  being the solution to the standard  Ornstein-Uhlenbeck process
\begin{align}
\label{eq:forward-true}
\mathrm{d} X_t
=
-  X_t \mathrm{d} t 
+
\sqrt{2} \mathrm{d} B_t,\quad
X_0 \sim p(x) \mathrm{d} x
,
\end{align}
corresponding to equation \eqref{eq:generalforward} in the case $\lambda=\gamma_t=\sigma=1$. Now as equation \eqref{eq:generalforward} is just a time-reparameterization and space-dilatation of equation \eqref{eq:forward-true}, the regularity results on the score function 
\begin{equation}\label{eq:scfunc}
    s(t,x):= \nabla \log p_t(x),
\end{equation}
with $p_t$ the law of $X_t$, directly transfer (by change of variable) to regularity results on $s^\star(t,x)= \nabla \log \overrightarrow{p}_t(x)$ with $\overrightarrow{p}_t$ the law of $\overrightarrow{X}_t$.

In this paper, we are interested in the Lipschitz regularity of the score function \( s(t, \cdot) \), and more specifically, in the time-integrability of the largest eigenvalue and the operator norm of its Jacobian. Throughout, we denote by \( \|x\| \) the Euclidean norm of a vector \( x \in \mathbb{R}^d \), and by \( \mathbb{S}^{d-1} := \{ x \in \mathbb{R}^d \mid \|x\| = 1 \} \) the unit sphere in \( \mathbb{R}^d \). For a linear map \( L : \mathbb{R}^d \to \mathbb{R}^d \), we define its largest and smallest eigenvalues as
\[
\lambda_{\max}(L) := \sup_{x \in \mathbb{S}^{d-1}} \langle x, Lx \rangle, \quad \lambda_{\min}(L) := \inf_{x \in \mathbb{S}^{d-1}} \langle x, Lx \rangle,
\]
and its operator norm as
\[
\|L\| := \sup_{x \in \mathbb{S}^{d-1}} |\langle x, Lx \rangle|=|\lambda_{\max}(L)|\vee |\lambda_{\min}(L)|.
\]
These quantities will be used to characterize the regularity of the score function, particularly in terms of one-sided and two-sided Lipschitz estimates. Throughout, we use $\nabla^k$ and $\partial_t^k$ to denote the $k$-th order differential operators with respect to the spatial variable $x \in \mathbb{R}^d$ and the time variable $t \in \mathbb{R}_+$, respectively.

We now introduce the Hölder spaces, which provide a natural framework for describing higher-order regularity.
For $\eta \geq 0$, define
$
\lfloor \eta \rfloor := \max\{k \in \mathbb{N}_0 \mid k \leq \eta\}$. 
Let $\mathcal{X} \subset \mathbb{R}^d$, $\mathcal{Y} \subset \mathbb{R}^p$ be subsets of Euclidean spaces,
and let $(f_1, \ldots, f_p) =: f : \mathcal{X} \to \mathcal{Y}$ belong to $C^{\lfloor \eta \rfloor}(\mathcal{X}, \mathcal{Y})$, meaning that  it is $\lfloor \eta \rfloor$-times continuously differentiable.
For any multi-index $\nu = (\nu_1, \ldots, \nu_d) \in \mathbb{N}_0^d$ with $|\nu| := \nu_1 + \cdots + \nu_d \leq \lfloor \eta \rfloor$, denote the corresponding partial derivative of each component $f_j$ by
$
\partial^\nu f_j := \frac{\partial^{|\nu|} f_j}{\partial x_1^{\nu_1} \cdots \partial x_d^{\nu_d}}.
$
Writing
$
\|f_j\|_{\mathcal{X},\alpha} := \sup_{\substack{x ,y\in \mathcal{X}\\x\neq y}} \frac{|f_j(x) - f_j(y)|}{\|x - y\|^{\alpha}}
$
,
we define
\begin{align*}
    \| f \|_{\mathcal{H}^\eta(\mathcal{X})}
    :=
    \max_{1 \leq j \leq d} \Bigl\{ \sum_{0\leq |\nu| \leq \lfloor \eta \rfloor} \|\partial^\nu f_j\|_{L^\infty(\mathcal{X})} + \sum_{|\nu| = \lfloor \eta \rfloor} \|\partial^\nu f_j\|_{\mathcal{X},\eta - \lfloor \eta \rfloor} \Bigr\}.
\end{align*}

Then, for $K > 0$, define the Hölder ball of regularity $\eta$ and radius $K$ by
\begin{align}\label{align:holderspace}
\mathcal{H}^\eta_K(\mathcal{X}, \mathcal{Y}) := \left\{ f \in C^{\lfloor \eta \rfloor}(\mathcal{X}, \mathcal{Y}) \ \middle| \ \| f \|_{\mathcal{H}^\eta(\mathcal{X})} \leq K \right\}.
\end{align}

In the following, we will often take $\mathcal{X}$ being $\mathbb{R}^d$ or $supp(p)$ the support of the distribution $p$. In the case where $\mathcal{X}=\mathbb{R}^d$, we write $\mathcal{H}^\eta$ instead of $\mathcal{H}^\eta(\mathbb{R}^d)$ for lightness of notation. For $p>0$, we make use of the Wasserstein distance \citep{villani2009optimal} which is defined as
\begin{equation}
W_p(\mu, \nu) = \left( \inf_{\pi \in \Pi(\mu, \nu)} \int_{\mathbb{R}^d \times \mathbb{R}^d} \|x - y\|^p d\pi(x, y) \right)^{1/p},
\end{equation}
where $\Pi(\mu, \nu)$ is the set of all couplings between the probability measures $\mu$ and $\nu$.

\section{Lipschitz-type regularity}\label{sec:mainresults}
In this section, we present and discuss the main results of the paper on the Lipchitz-regularity of the score function. We begin by introducing the assumptions placed on the data distribution 
$p$, which are used to establish Lipschitz estimates for the score function.

\begin{assumption}\label{assum:1}
The probability density $p:\mathbb{R}^d\rightarrow \mathbb{R}$ is of the form
$p(x)=\exp(-u(x)+a(x))$ and satisfies both the following assumptions
\begin{enumerate}
    \item $u\in C^2$ on the interior of the support of $p$ and there exists $\alpha>0$ such that   $\nabla^2 u \succeq \alpha\text{Id}$, 
    \item there exists $\beta\in (0,1]$ and $K>0$ such that for all $x,y\in \mathbb{R}^d$ with $\|x-y\|\leq 1$, we have\\ $|a(x)-a(y)|\leq K\|x-y\|^\beta$.
\end{enumerate}
\end{assumption}
Assumption~\ref{assum:1} ensures that the data distribution admits a density with respect to the Lebesgue measure that is both sub-Gaussian and Hölder regular. Importantly, this does not imply that \( p \) is log-concave, since the function \( x \mapsto u(x) - a(x) \) does not need to be convex. In addition to Assumption~\ref{assum:1}, we impose further structural conditions to control the behavior of the distribution.

\begin{assumption}\label{assum:2}
The  probability density $p:\mathbb{R}^d\rightarrow \mathbb{R}$ satisfies Assumption \ref{assum:1} and \textbf{one} of the following three conditions holds:
\begin{enumerate}
    \item The support of $p$ is compact.
    \item $\nabla^2 u \preceq A\text{Id}$, for some $A>0$.
    \item $\|a\|_\infty\leq K$ for some $K>0$.
\end{enumerate}
\end{assumption}
Assumption~\ref{assum:2} offers three alternative structural conditions on the data distribution: either compact support, a uniform upper bound on the curvature of the potential $u$, or a bound on the supremum norm of the perturbation $a$. This assumption is used to control the integral  $\int \exp\left(a(y)-a(\int z d\mu(z))\right)d\mu(y)$, for certain key probability measures $\mu$ related to $p$ (see Lemma \ref{lemma:boundonratios} for details).

Throughout the paper, we use \( C, C_2 > 0 \) to denote generic constants that may vary from line to line. These constants depend only on the parameters \( \alpha, \beta, K, d \), and either on the diameter of the support of \( p \) under Assumption~\ref{assum:2}-1 or on the curvature bound \( A \) under Assumption~\ref{assum:2}-2. No dependence on other quantities (e.g., the point \( x \), time \( t \), or regularity exponents \( \gamma \)) is introduced unless explicitly stated.

We do not attempt to optimize the dependence of these constants on the ambient dimension \( d \). While we believe that the results could in principle be made dimension-free, such a refinement would require significantly more technical effort, and we leave this direction for future work. The goal of this paper is to provide sharp (with respect to time) and flexible regularity estimates under general assumptions, in a framework broad enough to accommodate practical scenarios and ensure the stability of learned generative processes.

\subsection{One-sided Lipschitz regularity}\label{sec:onesidedL}
We start by studying the one-sided Lipschitz regularity of the score function, that is, a bound on the largest eigenvalue of its Jacobian.
In the context of score-based generative modeling, the time-integrability of the largest eigenvalue \( \lambda_{\max}(\nabla s(t,x)) \) is more relevant than that of the full operator norm \( \|\nabla s(t,x)\| \). This is because stability analyses based on Grönwall-type arguments typically involve only the maximal eigenvalue of the Jacobian, as illustrated by the following result.

\begin{proposition}
\label{prop:SDEstability-drift}
Let \( t > 0 \), and let \( (X_s)_{s \in [0,t]} \) and \( (\bar{X}_s)_{s \in [0,t]} \) be the solutions to the SDEs
\begin{align*}
 \mathrm{d}X_s &= a_s(X_s) \, \mathrm{d}s + b_s \, \mathrm{d}B_s, \quad X_0 \sim \mu_0, \\
 \mathrm{d}\bar{X}_s &= \bar{a}_s(\bar{X}_s) \, \mathrm{d}s + b_s \, \mathrm{d}B_s, \quad \bar{X}_0 \sim \mu_0,
\end{align*}
where \( b_s \geq 0 \), and \( (a_s)_s \), \( (\bar{a}_s)_s \) are locally space-Lipschitz vector fields. Denoting by \( \mu_s = \mathrm{Law}(X_s) \), \( \bar{\mu}_s = \mathrm{Law}(\bar{X}_s) \) and $L_s= \sup_{x \in \mathbb{R}^d} \lambda_{\max}(\nabla a_s(x))$, we have
\begin{align*}
    W_2^2(\mu_t, \bar{\mu}_t)
    \leq t
    \int_0^t \int_{\mathbb{R}^d}
    e^{2\int_s^t L_u \mathrm{d}u} 
    \|a_s(x) - \bar{a}_s(x)\|^2 \, \mathrm{d}\bar{\mu}_s(x) \, \mathrm{d}s.
\end{align*}
\end{proposition}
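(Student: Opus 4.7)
The plan is to use a synchronous coupling: run both SDEs with the same initial sample $X_0 = \bar X_0 \sim \mu_0$ and driven by the same Brownian motion $B$. Because the two diffusion coefficients are equal, the difference $e_s := X_s - \bar X_s$ satisfies a pathwise ODE with no martingale part,
\begin{equation*}
\frac{\mathrm{d}}{\mathrm{d}s} e_s = a_s(X_s) - \bar a_s(\bar X_s) = \bigl[a_s(X_s) - a_s(\bar X_s)\bigr] + \bigl[a_s(\bar X_s) - \bar a_s(\bar X_s)\bigr] =: D_s + \Delta_s,
\end{equation*}
with initial value $e_0 = 0$. Since $(X_t, \bar X_t)$ is a coupling of $(\mu_t, \bar\mu_t)$, it will suffice to control $\mathbb{E}\|e_t\|^2$ and invoke the definition of $W_2$.

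The next step is to bound $\frac{\mathrm{d}}{\mathrm{d}s}\|e_s\|^2 = 2\langle e_s, D_s\rangle + 2\langle e_s, \Delta_s\rangle$. For the first term I would write $D_s = \bigl(\int_0^1 \nabla a_s(\bar X_s + r e_s)\, \mathrm{d}r\bigr) e_s$ and use the hypothesis on $L_s$ to obtain $\langle e_s, D_s\rangle \leq L_s \|e_s\|^2$. For the second term Cauchy--Schwarz gives $\langle e_s, \Delta_s\rangle \leq \|e_s\|\|\Delta_s\|$. Combining, and dividing by $2\|e_s\|$ (with the usual regularization $\sqrt{\|e_s\|^2 + \varepsilon}$ and $\varepsilon \downarrow 0$ to handle the zero set), yields the scalar differential inequality
\begin{equation*}
\frac{\mathrm{d}}{\mathrm{d}s} \|e_s\| \leq L_s \|e_s\| + \|\Delta_s\|, \qquad \|e_0\| = 0.
\end{equation*}

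Now I would apply Gr\"onwall's lemma in its integral form to get, pathwise,
\begin{equation*}
\|e_t\| \leq \int_0^t e^{\int_s^t L_u \mathrm{d}u}\, \|\Delta_s\|\, \mathrm{d}s,
\end{equation*}
and then apply the Cauchy--Schwarz inequality in $s$ to recover the stated squared bound with the prefactor $t$:
\begin{equation*}
\|e_t\|^2 \leq t \int_0^t e^{2\int_s^t L_u \mathrm{d}u}\, \|\Delta_s\|^2\, \mathrm{d}s.
\end{equation*}
Finally I would take expectations on both sides, use Fubini together with the fact that $\Delta_s = a_s(\bar X_s) - \bar a_s(\bar X_s)$ depends only on $\bar X_s \sim \bar\mu_s$ so that $\mathbb{E}\|\Delta_s\|^2 = \int \|a_s(x) - \bar a_s(x)\|^2 \mathrm{d}\bar\mu_s(x)$, and conclude via $W_2^2(\mu_t,\bar\mu_t) \leq \mathbb{E}\|X_t - \bar X_t\|^2$.

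The conceptual core is really just step two: exploiting that only the \emph{symmetric} part of $\nabla a_s$ enters the derivative of $\|e_s\|^2$, which is why the one-sided constant $L_s$ (rather than a full Lipschitz constant) suffices. The main technical wrinkle is the non-smoothness of $\|\cdot\|$ at the origin when passing from $\|e_s\|^2$ to $\|e_s\|$; this is a standard issue handled by the $\sqrt{\|e_s\|^2 + \varepsilon}$ regularization or by appealing to absolute continuity of $s \mapsto \|e_s\|$, and does not affect the final estimate.
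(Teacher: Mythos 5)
Your proof is correct and follows essentially the same route as the paper: a synchronous coupling, the one-sided Lipschitz bound on $\langle e_s, a_s(X_s)-a_s(\bar X_s)\rangle$ via $L_s$, Gr\"onwall, and a final Cauchy--Schwarz in time to produce the factor $t$. The only difference is that you apply the linear Gr\"onwall inequality pathwise to $\|e_s\|$ (with the standard regularization at the origin) before taking expectations, whereas the paper applies a nonlinear Gr\"onwall inequality directly to $g(s)=\mathbb{E}\|X_s-\bar X_s\|^2$; both are valid and yield the same estimate.
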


The proof of Proposition~\ref{prop:SDEstability-drift} can be found in Section~\ref{sec:prop:SDEstability-drift}. This result shows that if the largest eigenvalue of the Jacobian of one of the drifts is integrable over time, then the Wasserstein distance between the laws of the two SDE solutions is proportional to the discrepancy between their drifts. In the context of score-based generative modeling, this is a key tool for analyzing model stability and convergence, as it directly relates the model error to the accuracy of the score approximation. When the initial distributions differ, a similar bound still holds (see for instance Proposition 4.3 in \cite{pedrotti2023improved}). We now state the main result of the paper, which provides an upper bound on the largest eigenvalue of the Jacobian of the score function.
\begin{theorem}[One-sided-Lipschitz regularity]\label{theo:mainlipreguofscore}
Let $p:\mathbb{R}^d\rightarrow \mathbb{R}$ a  probability density satisfying Assumption \ref{assum:1}. If it additionally satisfies either Assumption \ref{assum:2}-1, \ref{assum:2}-2 or  \ref{assum:2}-3, then  
for all $t>0$ the score function $s$ \eqref{eq:scfunc} associated to equation \eqref{eq:forward-true} satisfies 
       \begin{equation*}
\sup_{x\in \mathbb{R}^d} \lambda_{\max}\big(\nabla s(t,x)\big)\leq Ce^{-2t}(1+t^{\frac{\beta}{2}-1})-1.
\end{equation*}
\end{theorem}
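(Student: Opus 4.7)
The plan is to reduce the one-sided Lipschitz bound to a quantitative estimate on the covariance of the posterior of $X_0$ given $X_t=x$. Tweedie's formula gives
\[
s(t,x) = \sigma_t^{-2}\bigl(m_t\,\mathbb{E}[X_0\mid X_t=x]-x\bigr),\qquad m_t:=e^{-t},\ \sigma_t^2:=1-e^{-2t},
\]
and combined with the identity $\nabla_x\mathbb{E}[X_0\mid X_t=x]=(m_t/\sigma_t^2)\,\mathrm{Cov}(X_0\mid X_t=x)$ this yields the key representation
\[
\nabla s(t,x) \;=\; -\frac{1}{\sigma_t^2}\mathrm{Id}\;+\;\frac{m_t^2}{\sigma_t^4}\,\mathrm{Cov}(X_0\mid X_t=x).
\]
The theorem then follows from showing $\lambda_{\max}\bigl(\mathrm{Cov}(X_0\mid X_t=x)\bigr)\leq \sigma_t^2(1+Ct^{\beta/2})/(m_t^2+\alpha\sigma_t^2)$ for small $t$ and $\lambda_{\max}\bigl(\mathrm{Cov}(X_0\mid X_t=x)\bigr)\leq C$ uniformly for $t\geq 1$; a direct asymptotic expansion of the two contributions in the representation above reproduces the stated $Ce^{-2t}(1+t^{\beta/2-1})-1$ upper bound.

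The posterior is $\mu_{x,t}(y)\propto\exp(-V_0(y)+a(y))$ with $V_0(y):=\|x-m_t y\|^2/(2\sigma_t^2)+u(y)$, which is strongly convex of modulus $m_t^2/\sigma_t^2+\alpha$. If $a$ were absent, the Brascamp--Lieb inequality would immediately give $\mathrm{Cov}(\mu_{x,t})\preceq(m_t^2/\sigma_t^2+\alpha)^{-1}\mathrm{Id}$; the whole work is to absorb the $\beta$-Hölder perturbation $a$. I would use a mollification at a scale $\varepsilon=\varepsilon(t)$ to be optimised: set $a_\varepsilon:=a\ast\phi_\varepsilon$ (extending $a$ off the support of $p$ in a $\beta$-Hölder manner) and $r_\varepsilon:=a-a_\varepsilon$, for which the standard estimates $\|r_\varepsilon\|_\infty\leq CK\varepsilon^\beta$ and $\|\nabla^2 a_\varepsilon\|_\infty\leq CK\varepsilon^{\beta-2}$ hold. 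Brascamp--Lieb applied to $\tilde\mu_{x,t}\propto\exp(-V_0+a_\varepsilon)$, which remains log-concave as long as $CK\varepsilon^{\beta-2}<m_t^2/\sigma_t^2+\alpha$, together with Holley--Stroock absorbing $r_\varepsilon$ via a factor $e^{4K\varepsilon^\beta}$, yields
\[
\mathrm{Cov}(\mu_{x,t})\;\preceq\; \frac{1+CK\varepsilon^\beta}{m_t^2/\sigma_t^2+\alpha-CK\varepsilon^{\beta-2}}\,\mathrm{Id}.
\]
Choosing $\varepsilon\sim\sigma_t/m_t$ makes both corrections of relative order $(\sigma_t/m_t)^\beta\lesssim t^{\beta/2}$ in the small-$t$ regime, which delivers the small-$t$ covariance estimate.

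In the large-$t$ regime the likelihood becomes weak, $\mu_{x,t}$ tends to $p$, and Assumption~\ref{assum:2} enters precisely through the control of integrals $\int\exp\bigl(a(y)-a(\mathbb{E}_\mu[Y])\bigr)\,d\mu(y)$ mentioned right after its statement. Under~\ref{assum:2}-1, $\mu_{x,t}$ is supported in a set of fixed diameter $D$ and the oscillation of $a$ there is at most $CKD^\beta$; a uniform Holley--Stroock perturbation on top of the compact-support covariance bound gives $\mathrm{Cov}\preceq C\,\mathrm{Id}$. Under~\ref{assum:2}-3, $\|a\|_\infty\leq K$ is directly absorbed by Holley--Stroock applied to the log-concave reference $e^{-u}$, yielding $\mathrm{Cov}_p\preceq \alpha^{-1}e^{4K}\mathrm{Id}$, which is inherited by $\mu_{x,t}$ for large $t$. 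Under~\ref{assum:2}-2, the two-sided Hessian bound $\alpha\mathrm{Id}\preceq\nabla^2 u\preceq A\,\mathrm{Id}$ provides Gaussian upper and lower envelopes on $e^{-u}$; the Hölder integral is then closed via a Gaussian moment estimate on $e^{CK\|Y-\bar Y\|^\beta}$, again producing a uniform covariance bound. Plugged into the representation of the first paragraph, this supplies the $Ce^{-2t}-1$ asymptote.

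The main technical obstacle I anticipate is the simultaneous balancing in the mollification step: $\varepsilon^{\beta-2}$ must be small relative to the baseline strong convexity $m_t^2/\sigma_t^2+\alpha$ (so that $V_0-a_\varepsilon$ stays strongly convex and Brascamp--Lieb applies) while $\varepsilon^\beta$ remains small (so Holley--Stroock does not blow up the covariance). The choice $\varepsilon\sim\sigma_t/m_t$ resolves these two constraints in the small-$t$ window, but extending $a$ cleanly off $\mathrm{supp}(p)$, interpolating to the large-$t$ analysis, and closing the three different Hölder integrals arising from the three sub-cases of Assumption~\ref{assum:2} is where the delicate accounting lies.
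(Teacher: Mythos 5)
Your starting representation is exactly the paper's Proposition~\ref{prop:values} (Tweedie plus the identity expressing $\nabla s$ through the posterior covariance of $p^{t,x}$), and the overall architecture — Brascamp--Lieb for a strongly log-concave reference, plus a mechanism to absorb the $\beta$-H\"older perturbation $a$ with only a relative loss of order $t^{\beta/2}$, plus a case analysis over Assumption~\ref{assum:2} to get a uniform covariance bound away from $t=0$ — matches the paper. The key technical step, however, is genuinely different. The paper removes $a$ entirely, forms the log-concave measure $\nu^{t,x}$ of~\eqref{eq:nutx}, and bounds the \emph{difference} of the two covariances through the chain Propositions~\ref{prop:firstuglybound} and~\ref{prop:lastuglybound} and Lemmas~\ref{lemma:seconduglybound}--\ref{lemma:extendedbl}; the extra factor $t^{\beta/2}$ is extracted from the H\"older modulus $|a(y)-a(\bar y)|\leq l(x,y)$ combined with Gaussian-type moment bounds on $\|H_\nu\|$. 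You instead mollify $a$ at scale $\varepsilon\sim\sigma_t/m_t$, keep the smooth part inside the convex potential (degrading the modulus by $K\varepsilon^{\beta-2}$, which is an $O(t^{\beta/2})$ relative perturbation of $m_t^2/\sigma_t^2+\alpha$), and absorb the remainder of oscillation $O(K\varepsilon^{\beta})$ by Holley--Stroock at the level of the Poincar\'e constant. This is a valid alternative: the arithmetic $\varepsilon^{\beta-2}/\kappa_t\leq(\sigma_t/m_t)^{\beta}\lesssim t^{\beta/2}$ and $e^{CK\varepsilon^\beta}=1+O(t^{\beta/2})$ does reproduce $\lambda_{\max}(\nabla s+\mathrm{Id})\leq Ce^{-2t}t^{\beta/2-1}$ for small $t$, and your route avoids the paper's heaviest bookkeeping (the $\Theta$-indexed product bounds). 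What it buys less cheaply is the regime $t\gtrsim 1$: the convexity constraint $CK\varepsilon^{\beta-2}<m_t^2/\sigma_t^2+\alpha$ can fail at intermediate times when $K$ is large and $\alpha$ small, so you must make the crossover time $t_0=t_0(\alpha,\beta,K)$ explicit and rely entirely on the uniform covariance bound for $t\geq t_0$. There the three sub-cases of Assumption~\ref{assum:2} reappear in essentially the paper's form: cases 1 and 3 are easy (diameter bound, bounded oscillation), but case 2 requires lower-bounding the normalizing constant $Q_t r(x)$ via the two-sided Hessian bound before the Gaussian moment estimate on $e^{CK\|Y-\bar Y\|^\beta}$ closes — this is precisely the content of Lemma~\ref{lemma:boundonratios}, and it is the one part of your sketch that still needs to be written out rather than merely invoked.
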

The proof of Theorem \ref{theo:mainlipreguofscore} can be found in Section  \ref{sec:theo:mainlipreguofscore}.
This result shows that the largest eigenvalue of the Jacobian of the score function is uniformly bounded over space, with an explicit decay rate as a function of time. The presence of the term \( t^{\frac{\beta}{2}-1} \) reflects the influence of the Hölder regularity of the perturbation \( a \), and the exponential prefactor ensures that the dominant behavior is governed by the underlying Gaussian dynamics. Such a bound is crucial for understanding the stability and convergence of generative methods that rely on the score function, especially in the presence of non-log-concave perturbations.
In particular, from the integrability near $0$ of the function $t\mapsto t^{\frac{\beta}{2}-1}$, we deduce the following corollary.
\begin{corollary}\label{coro:finiteintegrabilityofscore}
 Let $p:\mathbb{R}^d\rightarrow \mathbb{R}$ a  probability density satisfying Assumption \ref{assum:1}. If it additionally satisfies either Assumption \ref{assum:2}-1, \ref{assum:2}-2 or  \ref{assum:2}-3, then  
for all $t>0$ the score function $s$ \eqref{eq:scfunc} associated to equation \eqref{eq:forward-true} satisfies 
       \begin{equation*}
\int_0^\infty \sup_{x\in \mathbb{R}^d} \lambda_{\max}\big(\nabla s(t,x)+\text{Id}\big)\leq C.
\end{equation*}
\end{corollary}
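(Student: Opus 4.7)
The plan is to apply Theorem \ref{theo:mainlipreguofscore} directly and then check that the resulting pointwise-in-time upper bound is integrable on $(0,\infty)$.

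First I would observe that $\nabla s(t,x) = \nabla^2 \log p_t(x)$ is a symmetric matrix, so adding the identity shifts every eigenvalue by $1$; in particular,
\begin{equation*}
\lambda_{\max}\bigl(\nabla s(t,x) + \mathrm{Id}\bigr) = \lambda_{\max}\bigl(\nabla s(t,x)\bigr) + 1.
\end{equation*}
Combining this identity with the bound of Theorem \ref{theo:mainlipreguofscore} gives, uniformly in $x$,
\begin{equation*}
\sup_{x\in \mathbb{R}^d}\lambda_{\max}\bigl(\nabla s(t,x) + \mathrm{Id}\bigr) \;\leq\; C e^{-2t}\bigl(1 + t^{\beta/2-1}\bigr).
\end{equation*}

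Next I would integrate this bound in $t$ and split it into two pieces:
\begin{equation*}
\int_0^\infty C e^{-2t}\bigl(1 + t^{\beta/2-1}\bigr)\,dt \;=\; C\int_0^\infty e^{-2t}\,dt \;+\; C\int_0^\infty e^{-2t} t^{\beta/2-1}\,dt.
\end{equation*}
The first integral equals $C/2$. For the second, the assumption $\beta > 0$ from Assumption \ref{assum:1} gives $\beta/2 - 1 > -1$, so the integrand is integrable near $t=0$; the exponential factor clearly ensures integrability at infinity. Concretely, this is a Gamma-type integral equal to $2^{-\beta/2}\Gamma(\beta/2) < \infty$. Summing the two contributions yields a finite constant of the asserted form.

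There is no genuine obstacle here: the corollary is really just a statement about the integrability of the pointwise time bound from Theorem \ref{theo:mainlipreguofscore}. The only two observations one needs are the symmetry of $\nabla s(t,x)$ (so that eigenvalues add cleanly under the shift by $\mathrm{Id}$) and the Hölder exponent $\beta > 0$ (so that $t^{\beta/2-1}$ remains locally integrable at the origin).
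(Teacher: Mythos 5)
Your proof is correct and follows exactly the route the paper intends: the corollary is stated as an immediate consequence of Theorem \ref{theo:mainlipreguofscore}, justified by the integrability near $0$ of $t\mapsto t^{\beta/2-1}$ (since $\beta>0$) together with the exponential decay at infinity. Your additional observation that $\lambda_{\max}(\nabla s(t,x)+\mathrm{Id})=\lambda_{\max}(\nabla s(t,x))+1$ follows directly from the paper's definition of $\lambda_{\max}$ as a supremum of quadratic forms over the unit sphere, so the argument is complete.
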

This corollary shows that the score satisfies a uniform one-sided-Lipschitz integrability condition over time, which is a key technical assumption in the convergence analysis of score-based generative models as illustrated by Proposition \ref{prop:SDEstability-drift}. In particular, it allows one to control the growth of trajectories such as those arising in reverse-time ODEs and SDEs \citep{lions2024transport}. In this sense, Corollary~\ref{coro:finiteintegrabilityofscore} provides a theoretical justification for the one-sided Lipschitz assumptions employed in works such as \cite{kwon2022score,pedrotti2023improved}.

\subsection{Lipschitz regularity}\label{sec:Lr}
We now turn to the question of whether the score function can be fully Lipschitz, that is, whether the lower bound on the smallest eigenvalue of the Jacobian matches the upper bound established in Theorem~\ref{theo:mainlipreguofscore}. In particular, we are interested in whether the Jacobian is uniformly bounded in operator norm. As a preliminary remark, Proposition~\ref{prop:values} yields, for all \( t > 0 \) and \( x \in \mathbb{R}^d \),
\begin{equation}\label{align:naivelowerbound}
\lambda_{\min}(\nabla s(t,x)) \geq -C e^{-2t}(1 + t^{-1}) - 1.
\end{equation}
However, this lower bound does not suffice to guarantee the finiteness of the integral
$
\int_0^{\infty} \|\nabla s(t, \cdot)\|_{\infty} \, dt.$ To study the regularity of the operator norm more precisely, we distinguish between the compactly supported case and the case where $p$ has full support.

\subsubsection{The compact case}\label{sec:lipregucompact}
In the case where the data distribution 
$p$ is compactly supported, the lower bound in equation~\eqref{align:naivelowerbound} turns out to be essentially sharp and cannot be improved, as formalized in the following result.

\begin{proposition}\label{prop:unboundjaccompact}
Let $p:\mathbb{R}^d\rightarrow \mathbb{R}$ a probability density that is compactly supported. Then  
for all $t>0$, the score function $s$ \eqref{eq:scfunc} associated to equation \eqref{eq:forward-true} satisfies 
  \begin{equation*}
\sup_{x\in \mathbb{R}^d} \|\nabla s(t,x)\|\geq C^{-1}e^{-2t}(1+t^{-1})+1,
\end{equation*}
\end{proposition}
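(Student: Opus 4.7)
The plan is to find a sequence of points $x$ for which the Jacobian $\nabla s(t,x)$ has a negative eigenvalue approaching $-1/\sigma_t^2$, and then to check elementarily that $1/\sigma_t^2 \ge C^{-1} e^{-2t}(1+t^{-1}) + 1$.

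First, I would record the Tweedie-type identity
\begin{equation*}
\nabla s(t,x) = -\frac{1}{\sigma_t^2}\,\mathrm{Id} + \frac{e^{-2t}}{\sigma_t^4}\,\mathrm{Cov}(\mu_x),
\qquad \mu_x(dy) \propto p(y)\,\exp\!\left(-\tfrac{\|x - e^{-t}y\|^2}{2\sigma_t^2}\right)dy,
\end{equation*}
obtained by direct differentiation of $p_t(x) = \int p(y)\,\phi_{\sigma_t^2}(x - e^{-t}y)\,dy$; this is of the same flavour as the formulas underlying Proposition~\ref{prop:values}. Since $\mathrm{Cov}(\mu_x) \succeq 0$, testing on any unit vector $v$ gives
\begin{equation*}
\langle v, \nabla s(t,x)\,v\rangle = -\frac{1}{\sigma_t^2} + \frac{e^{-2t}}{\sigma_t^4}\,\mathrm{Var}_{\mu_x}(\langle v, y\rangle),
\end{equation*}
so $\|\nabla s(t,x)\| \ge |\langle v, \nabla s(t,x)v\rangle|$ is close to $1/\sigma_t^2$ as soon as the variance on the right is small.

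Second, I would exploit compactness of $\mathrm{supp}(p)$. Fix $v \in \mathbb{S}^{d-1}$, set $x = Rv$, and observe that
\begin{equation*}
\mu_{Rv}(dy) \propto p(y)\,\exp\!\Big(\tfrac{Re^{-t}}{\sigma_t^2}\langle v, y\rangle\Big)\,\exp\!\Big(-\tfrac{e^{-2t}\|y\|^2}{2\sigma_t^2}\Big).
\end{equation*}
Let $M_v := \max_{y \in \mathrm{supp}(p)}\langle v, y\rangle$ (attained by compactness). A slab comparison of numerator and denominator of the tilted measure yields $\mathbb{P}_{\mu_{Rv}}(\langle v, y\rangle < M_v - \delta) \le C e^{-cR\delta}$ for any $\delta > 0$, where $c$ depends only on $t$. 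Since the support has bounded diameter $D$, choosing $\delta = R^{-1/2}$ gives $\mathrm{Var}_{\mu_{Rv}}(\langle v, y\rangle) \le R^{-1} + CD^2 e^{-c\sqrt{R}} \to 0$. Combined with the first step,
\begin{equation*}
\sup_{x \in \mathbb{R}^d} \|\nabla s(t,x)\| \;\ge\; \limsup_{R \to \infty} |\langle v, \nabla s(t,Rv)\,v\rangle| \;=\; \frac{1}{\sigma_t^2}.
\end{equation*}

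Finally, since $\sigma_t^2 = 1 - e^{-2t}$, the identity $\frac{1}{\sigma_t^2} - 1 = \frac{e^{-2t}}{1 - e^{-2t}}$ combined with the two-regime bound $1-e^{-2t} \le 2t$ for $t \in (0,1]$ and $1-e^{-2t} \le 1$ for $t \ge 1$ gives $\frac{t}{(1-e^{-2t})(1+t)} \ge 1/4$, hence $\frac{1}{\sigma_t^2} - 1 \ge C^{-1} e^{-2t}(1+t^{-1})$ with $C = 4$, yielding the claimed lower bound. The only substantive step is the second: producing a Laplace-type concentration estimate for $\mu_{Rv}$ that is uniform in $v$ and requires no smoothness of $\partial\,\mathrm{supp}(p)$. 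The slab argument sketched here does this using only boundedness of $\mathrm{supp}(p)$ and continuity of $y \mapsto \langle v,y\rangle$, so the conclusion holds in the full generality of compactly supported $p$.
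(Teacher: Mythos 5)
Your proof is correct in substance and lands on exactly the same lower bound $\tfrac{1}{1-e^{-2t}} = 1 + \tfrac{e^{-2t}}{1-e^{-2t}}$ as the paper, but by a genuinely different mechanism. The paper works with \emph{two} points: from Proposition~\ref{prop:values} it writes $\langle s(t,x)-s(t,y),\,y-x\rangle \geq \bigl(1+\tfrac{e^{-2t}}{1-e^{-2t}}\bigr)\|x-y\|^2 - C\tfrac{e^{-t}}{1-e^{-2t}}\|x-y\|$, using only that the posterior means $\int z\,dp^{t,x}(z)$ and $\int z\,dp^{t,y}(z)$ lie in the bounded support of $p$, and then lets $\|x-y\|\to\infty$; no concentration estimate is needed at all. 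You instead work at a \emph{single} point $x=Rv$, $R\to\infty$, and show that the posterior covariance collapses by an exponential-tilting argument, so that the eigenvalue $\langle v,\nabla s(t,Rv)v\rangle$ converges to $-1/\sigma_t^2$. Your route is spectrally more transparent (it exhibits where the bad eigenvalue lives), while the paper's is more economical since boundedness of the mean is a much weaker input than collapse of the variance. One quantitative overreach in your sketch: the bound $\mathbb{P}_{\mu_{Rv}}(\langle v,y\rangle < M_v-\delta)\leq Ce^{-cR\delta}$ with $C$ uniform in $\delta$ is not justified, because the natural lower bound on the normalizing constant involves $p\bigl(\{\langle v,y\rangle > M_v-\delta/2\}\bigr)$, which can decay arbitrarily fast as $\delta\to 0$ for a general compactly supported $p$; hence the choice $\delta=R^{-1/2}$ is not licensed. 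This is harmless for your conclusion: keeping $\delta>0$ fixed gives $\limsup_R \mathrm{Var}_{\mu_{Rv}}(\langle v,y\rangle)\leq \delta^2$ for every $\delta$, which is all the limit statement requires. The concluding elementary estimate $\tfrac{t}{(1-e^{-2t})(1+t)}\geq 1/4$ is fine and matches the paper's last line.
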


The proof of Proposition \ref{prop:unboundjaccompact} can be found in Section \ref{sec:prop:unboundjaccompact}. This result highlights a fundamental limitation in the compact case: the growth of the operator norm as 
$t\to 0$ cannot be mitigated, implying that the Jacobian is not uniformly integrable over time.

\subsubsection{The full support case} 
When the probability density \( p \) has full support and satisfies Assumption~\ref{assum:2}-2, the operator norm of the Jacobian of the score can be bounded by the same quantity as the upper bound on \( \lambda_{\max} \) given in Theorem~\ref{theo:mainlipreguofscore}.

\begin{corollary}[Lipschitz regularity]\label{coro:lipschitzbound}
 Let $p:\mathbb{R}^d\rightarrow \mathbb{R}$ a  probability density satisfying Assumptions \ref{assum:1} and \ref{assum:2}-2. Then  
for all $t>0$, the score function $s$ \eqref{eq:scfunc} associated to equation \eqref{eq:forward-true} satisfies 
       \begin{equation*}
\sup_{x\in \mathbb{R}^d} \|\nabla s(t,x)\|\leq Ce^{-2t}(1+t^{\frac{\beta}{2}-1})-1.
\end{equation*}
\end{corollary}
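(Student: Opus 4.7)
The plan is to supplement the upper bound on $\lambda_{\max}(\nabla s(t,x))$ from Theorem~\ref{theo:mainlipreguofscore} with a matching lower bound on $\lambda_{\min}(\nabla s(t,x))$; combining the two through $\|\nabla s(t,\cdot)\|=|\lambda_{\max}(\nabla s)|\vee|\lambda_{\min}(\nabla s)|$ then delivers the operator-norm estimate. The new ingredient unlocked by Assumption~\ref{assum:2}-2 is a Cramér-Rao lower bound on the posterior covariance, mirroring the Brascamp-Lieb type upper bound already used for Theorem~\ref{theo:mainlipreguofscore}.

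The key identity I would start from is
\begin{equation*}
\nabla s(t,x)=\frac{e^{-2t}}{\sigma_t^4}\,\text{Cov}(\mu_{t,x})-\frac{1}{\sigma_t^2}\text{Id},\qquad \mu_{t,x}(\mathrm{d}y)\propto\exp\!\Bigl(-\tfrac{\|x-e^{-t}y\|^2}{2\sigma_t^2}-u(y)+a(y)\Bigr)\mathrm{d}y,
\end{equation*}
obtained by differentiating Tweedie's formula. Lower bounding $\lambda_{\min}(\nabla s)$ is therefore equivalent to lower bounding $\lambda_{\min}(\text{Cov}(\mu_{t,x}))$. For the unperturbed posterior $\nu_{t,x}\propto\exp(-\|x-e^{-t}y\|^2/(2\sigma_t^2)-u(y))$, the potential Hessian lies in $[(e^{-2t}/\sigma_t^2)+\alpha,(e^{-2t}/\sigma_t^2)+A]\,\text{Id}$ under Assumptions~\ref{assum:1}-1 and \ref{assum:2}-2, so an integration by parts of $\int\partial_iV\,\partial_jV\,e^{-V}$ identifies the Fisher information with $\mathbb{E}[\nabla^2V]$ and the Cramér-Rao inequality $\text{Cov}\succeq(\mathbb{E}[\nabla^2V])^{-1}$ yields
\begin{equation*}
\text{Cov}(\nu_{t,x})\succeq\frac{\sigma_t^2}{e^{-2t}+A\sigma_t^2}\,\text{Id}.
\end{equation*}
To transfer this to $\mu_{t,x}$, I would use $\text{Var}_\mu(T)=\min_c\int(T-c)^2\mathrm{d}\mu$ together with the ratio estimates from Lemma~\ref{lemma:boundonratios} (already invoked in the proof of Theorem~\ref{theo:mainlipreguofscore}): since $\nu_{t,x}$ concentrates at scale $\sim\sigma_t$ around its mean, the $\beta$-Hölder bound $|a(y)-a(y')|\leq K\|y-y'\|^\beta$ produces an oscillation of $a$ of order $K\sigma_t^\beta$ over the bulk, degrading the Cramér-Rao bound by a multiplicative factor $1+O(\sigma_t^\beta)$. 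Substituting $\lambda_{\min}(\text{Cov}(\mu_{t,x}))\geq c\sigma_t^2-C\sigma_t^{2+\beta}$ into the identity for $\nabla s$ and using $\sigma_t^2\asymp\min(1,t)$ yields the lower bound dual to Theorem~\ref{theo:mainlipreguofscore}, after absorbing constants.

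The main obstacle lies precisely in this transfer step: Cramér-Rao applies cleanly to the smooth core $\nu_{t,x}$, but $a$ is only Hölder, and Assumption~\ref{assum:2}-2 does not force $a$ to be bounded, so the Radon-Nikodym derivative $\mathrm{d}\mu_{t,x}/\mathrm{d}\nu_{t,x}\propto e^{a(y)}$ cannot be controlled uniformly in $y$. I would handle this by splitting the integral defining $\text{Cov}(\mu_{t,x})$ into a bulk region of radius $\sim\sigma_t$ around the posterior mean, where the Hölder modulus caps the oscillation of $a$ and Cramér-Rao applies almost verbatim, and a tail region suppressed by the Gaussian weight $\exp(-\|x-e^{-t}y\|^2/(2\sigma_t^2))$ combined with the strong convexity of $u$ coming from Assumption~\ref{assum:1}-1. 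The remaining steps---tracking constants through the identity for $\nabla s$ and pairing with Theorem~\ref{theo:mainlipreguofscore}---are routine bookkeeping.
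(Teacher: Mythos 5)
Your proposal matches the paper's proof essentially step for step: the paper also takes $\lambda_{\max}$ from Theorem~\ref{theo:mainlipreguofscore}, applies the Cramér--Rao bound to the log-concave surrogate $\nu^{t,x}$ (whose potential Hessian is bounded above by $(A+\tfrac{e^{-2t}}{1-e^{-2t}})\mathrm{Id}$ under Assumption~\ref{assum:2}-2) to get $\lambda_{\min}\bigl(\int H_\nu^{\otimes 2}d\nu^{t,x}\bigr)\geq \tfrac{1-e^{-2t}}{A(1-e^{-2t})+e^{-2t}}$, and transfers this to $p^{t,x}$ via the covariance-stability estimate \eqref{eq:finalboundeigenvaluecov}, which already controls the quadratic form of the covariance difference in absolute value by $C\sigma_t^{2+\beta}$ --- exactly the perturbation you sketch via Lemma~\ref{lemma:boundonratios} and the Hölder modulus of $a$. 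The only simplification you miss is that this transfer step needs no new bulk/tail argument, since it is literally the two-sided bound already established in the proof of Theorem~\ref{theo:mainlipreguofscore}.
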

The proof of Corollary \ref{coro:lipschitzbound} can be found in Section \ref{sec:coro:lipschitzbound}. This result shows that, in contrast to the compact support case, full support combined with bounded curvature ensures global Lipschitz control of the score function over time. In this sense, Corollary~\ref{coro:finiteintegrabilityofscore} provides a theoretical justification for the Lipschitz assumptions employed in works such as \cite{lee2023convergence, chen2022sampling,pmlr-v202-chen23q,wibisono2024optimal} in the case of diffusion, and \cite{chen2023probability,benton2023error,huang2024convergence} in the case of ODE.

\subsection{Lipschitz regularity of the Probability Flow ODE}\label{sec:ltm}
A direct consequence of Theorem~\ref{theo:mainlipreguofscore} is that the transport map constructed by the probability flow ODE \citep{song2020score} is Lipschitz. Indeed, setting the noise level \( b_t = 0 \) in the backward process \eqref{eq:truebackward}, the solution at final time \( X_T \) defines a deterministic transport map from \( p_T \) to \( p \). In particular, taking the limit as \( T \to \infty \), this construction yields a transport map from the standard Gaussian to the target distribution \( p \).

This limiting map is known as the \emph{Kim–Milman transport map} \citep{kim2012generalization}, also referred to as the \emph{Langevin transport map} \citep{fathi2024transportation}, and has been the subject of extensive recent study; see for instance \citep{mikulincer2023lipschitz,lopez2024bakry,conforti2025coupling}. The one-sided Lipschitz regularity established in Theorem~\ref{theo:mainlipreguofscore} ensures the time-integrability of the velocity field associated with the transport map, which directly implies its global Lipschitz continuity.

\begin{proposition}[The Probability flow ODE is Lipschitz]\label{prop:liptran} Let $p:\mathbb{R}^d\rightarrow \mathbb{R}$ a  probability density satisfying Assumption \ref{assum:1} and  additionally satisfying either Assumption \ref{assum:2}-1, \ref{assum:2}-2 or  \ref{assum:2}-3.  Then, the transport map from the $d$-dimensional Gaussian distribution to $p$ built by the probability flow ODE is Lipschitz.
\end{proposition}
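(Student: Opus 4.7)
The plan is to combine the one-sided Lipschitz estimate of Corollary \ref{coro:finiteintegrabilityofscore} with a Gronwall argument on the probability flow ODE, and then pass to the limit $T \to \infty$. In the normalized OU setting \eqref{eq:forward-true}, setting $b_t = 0$ in \eqref{eq:truebackward} yields the deterministic backward dynamics
\begin{equation*}
\dot Y_t = Y_t + s(T-t, Y_t), \qquad Y_0 \sim p_T,
\end{equation*}
and I denote by $T_T : x \mapsto Y_T^x$ the associated flow from reverse time $0$ to reverse time $T$, which pushes $p_T$ onto $p$. Since $p_T \to \mathcal{N}(0, I_d)$ as $T \to \infty$, the goal is to show that $T_T$ is Lipschitz with constant uniform in $T$, and then extract a limit.

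The first step is a pointwise Gronwall estimate on the Lipschitz constant of $T_T$. For two initial conditions $x, y$, writing $s(u, z') - s(u, z) = \int_0^1 \nabla s(u, z + \theta(z' - z))(z' - z)\, d\theta$ and using $v^\top M v \leq \lambda_{\max}(M) \|v\|^2$, one obtains
\begin{equation*}
\tfrac{d}{dt}\|Y_t^x - Y_t^y\|^2 \leq 2\Bigl( \sup_{z \in \mathbb{R}^d} \lambda_{\max}\bigl(\nabla s(T-t, z) + \text{Id}\bigr) \Bigr) \|Y_t^x - Y_t^y\|^2.
\end{equation*}
Integrating over $t \in [0, T]$ and substituting $u = T - t$ yields
\begin{equation*}
\mathrm{Lip}(T_T) \leq \exp\!\Bigl( \int_0^T \sup_{z \in \mathbb{R}^d} \lambda_{\max}\bigl(\nabla s(u, z) + \text{Id}\bigr)\, du \Bigr) \leq e^C,
\end{equation*}
where the final bound is exactly Corollary \ref{coro:finiteintegrabilityofscore} and is uniform in $T$.

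The last step is to pass to the limit $T \to \infty$. Since $p_T$ converges weakly to $\mathcal{N}(0, I_d)$ and the family $(T_T)_{T>0}$ is uniformly $e^C$-Lipschitz, a compactness argument (Arzel\`a--Ascoli applied on compact sets, after checking that $T_T(0)$ stays bounded using that $(T_T)_\ast p_T = p$ is fixed and has finite moments) yields a Lipschitz limit $T_\infty$ pushing $\mathcal{N}(0, I_d)$ onto $p$ with $\mathrm{Lip}(T_\infty) \leq e^C$. The main subtlety lies in this limiting step, since $\sup_z \lambda_{\max}(\nabla s(u, z))$ is singular as $u \to 0$; however, the time integrability guaranteed by Corollary \ref{coro:finiteintegrabilityofscore} exactly compensates this singularity, and one may alternatively invoke the established construction of the Kim--Milman/Langevin transport map \citep{kim2012generalization,fathi2024transportation} to identify $T_\infty$, with its Lipschitz constant inherited from the uniform bound above.
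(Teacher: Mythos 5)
Your proof is correct and rests on exactly the same two ingredients as the paper's: the time-integrable one-sided Lipschitz bound of Corollary \ref{coro:finiteintegrabilityofscore} and a Gr\"onwall estimate along the deterministic backward flow (the Gr\"onwall step you write out is precisely what the paper delegates to Lemma 2 of \cite{stephanovitch2024smooth}). Where you differ is in how the infinite time horizon is handled. The paper reparameterizes time via $t=e^{-u}$ and works with the velocity field $V(t,x)=t^{-1}\big(x+s(\log(t^{-1}),x)\big)$ on $(0,1]$, so that the ODE starts directly from the Gaussian at $t=0$ and Corollary \ref{coro:finiteintegrabilityofscore} translates verbatim into $\int_0^1\sup_x\lambda_{\max}(\nabla V_t(x))\,dt\le C$; the identification of the solution at $t=1$ with the probability-flow transport map is then imported from Proposition 2 of \cite{stephanovitch2024smooth}, and no limiting argument is needed. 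You instead keep the finite-$T$ maps $T_T:p_T\to p$ and pass to the limit by Arzel\`a--Ascoli. This works, but it is the one place where your argument is genuinely less complete: the compactness argument produces \emph{a} uniformly Lipschitz map pushing $\gamma_d$ onto $p$ along a subsequence, and identifying it with \emph{the} map built by the probability flow ODE (the Kim--Milman/Langevin map) is exactly the nontrivial step you defer to \citep{kim2012generalization,fathi2024transportation}; the paper's reparameterization avoids this issue entirely. One further small point worth making explicit in your step $\int_0^T\le C$ uniformly in $T$: Corollary \ref{coro:finiteintegrabilityofscore} bounds the integral over $(0,\infty)$ of a quantity that is not a priori of one sign, so you should invoke the pointwise upper bound $\sup_x\lambda_{\max}(\nabla s(u,x)+\mathrm{Id})\le Ce^{-2u}(1+u^{\beta/2-1})$ from Theorem \ref{theo:mainlipreguofscore}, whose right-hand side is nonnegative and integrable, to justify truncating the integral.
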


The proof of Proposition \ref{prop:liptran} can be found in Section \ref{sec:prop:liptran}. Interestingly, the one sided-Lipschit regularity of the score is enough to provide the complete Lipschitz regularity of the map. Compared to previous works on the Kim-Milman transport map, our main contribution lies in relaxing the typical Lipschitz regularity assumption on the data distribution to a weaker Hölder condition. However, in the case of Lipschitz perturbations, Assumption~\ref{assum:2} is not required to guarantee the existence of a Lipschitz transport map. This raises the open question of what are the minimal conditions required in the Hölder-perturbed setting to ensure such regularity.

\section{Higher-order regularity and neural network approximation}\label{sec:hiordersec}

\subsection{Higher order regularity}\label{sec:hor}
We now go beyond Lipschitz regularity and establish estimates on higher-order derivatives of the score function. This type of regularity plays an important role in the analysis of minimax rates \citep{Tsybakov08,oko2023diffusion} which aims to describe how the statistical convergence rates scale with the regularity of the data. To this end, we introduce a refined set of assumptions that allows us to control the behavior of the score in Hölder spaces of arbitrary order.

\begin{assumption}\label{assum:higherorder}
The probability density $p:\mathbb{R}^d\rightarrow \mathbb{R}$ satisfies Assumption \ref{assum:1}-1 and there exists $\beta>0$, $K\geq 1$ such that $p$ satisfies all the following assumptions:
\begin{enumerate}
    \item $a$ belongs to the Hölder space  $ \mathcal{H}^{\beta}_K(supp(p))$ defined in \eqref{align:holderspace}.
    \item  $u$ satisfies $\|\argmax_x e^{-u(x)}\|\leq K$. 
    \item $u$ satisfies $u\in C^{\lfloor \beta \rfloor+1}$ on the interior of the support of $p$ and   for all $x,y \in \mathbb{R}^d$ and $i\in \{0,...,\lfloor \beta \rfloor+1\}$, we have $\|x-y\|\leq (K|u(x)|\vee 1)^{-K} \Rightarrow \|\nabla^i u(y)\|\leq K(|u(x)|^K+1).$
\end{enumerate}
\end{assumption}

Assumption~\ref{assum:higherorder}-1 ensures that the perturbation \( a \) is of smoothness \( \beta \), naturally extending Assumption~\ref{assum:1}-2 to higher-order regularity settings. Assumption~\ref{assum:higherorder}-2 guarantees that the bulk of the probability mass of \( p \) is concentrated near the origin, which in turn ensures that under the forward process~\eqref{eq:forward-true}, the transported mass does not need to travel long distances—an important feature when controlling the time derivatives of the score. 

Assumption~\ref{assum:higherorder}-3 imposes a local regularity condition on the potential \( u \), allowing its derivatives to grow moderately with respect to its value. Although somewhat unconventional, this assumption is in fact very mild and designed to accommodate both compactly supported and full-support distributions. 
For instance, in the compact case, this framework allows potentials of the form \( u(x) = d(x, \mathcal{Y}^c)^{-R} \), with $R>0$ where \( d(\cdot, \mathcal{Y}^c) \) denotes the distance to the complement of a convex compact set \( \mathcal{Y} \). In the full-support setting, the assumption covers a broad class of \( \beta \)-smooth densities, as \( u \) can, for instance, be any convex polynomial. Moreover, Assumption~\ref{assum:higherorder} also accommodates Gaussian mixtures whose modes lie within a prescribed ball: in this case, the perturbation \( a \) can absorb the non-convex behavior of \( p \) inside the ball, while the convexity of \( u \) governs the Gaussian behavior outside.

\subsubsection{Space-regularity}
Under the regularity conditions of Assumption \ref{assum:higherorder}, we show that the score function becomes progressively smoother under the action of the forward process \eqref{eq:forward-true}. The regularity gain is quantified via Hölder norms over high-probability subsets of \( \mathbb{R}^d \) (with respect to $p_t$ the law of the Ornstein-Uhlenbeck process), and adapts to the smoothness of the initial distribution.

\begin{theorem}\label{theo:highregu}
Let $p:\mathbb{R}^d\rightarrow \mathbb{R}$ a  probability density satisfying Assumption \ref{assum:1}-1 and Assumption \ref{assum:higherorder}. Then for all $\epsilon\in (0,1/4)$ and $t>0$, there exists a convex set $A_t^\epsilon \subset B(0,C\log(\epsilon^{-1})^{C_2})$ such that $p_t(A_t^\epsilon)\geq 1-\epsilon$ and for all $\gamma\geq 0$, we have
$$ \|s(t,\cdot)+\text{Id}(\cdot)\|_{\mathcal{H}^{\gamma}(A_t^\epsilon)}\leq C_\gamma\log(\epsilon^{-1})^{C_2(1 + \gamma)}e^{-t} \left(1 + t^{-\frac{1}{2} \big( (1 + \gamma - \beta) \vee 0 \big)} \right).$$
\end{theorem}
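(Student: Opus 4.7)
The strategy I would follow is to reduce the problem to cumulant estimates on the posterior $q_{t,x}$ of $X_0 \mid X_t = x$, and then transfer the Hölder regularity of $a$ into the small-$t$ scaling. By Tweedie's formula, $s(t,x) + x = (e^{-t}/\sigma_t^2)(m(t,x) - e^{-t}x)$ with $m(t,x) = \mathbb{E}[X_0 \mid X_t = x]$, and the posterior $q_{t,x}$ has density proportional to $\exp(-u(x_0) + a(x_0) - \tfrac{e^{-2t}}{2\sigma_t^2}\|x_0 - e^t x\|^2)$, i.e.\ a log-concave measure with strong-convexity parameter $\asymp 1/\sigma_t^2$ as $t \to 0$, perturbed by the Hölder factor $e^a$. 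Rewriting $\log p_t(x) = -\|x\|^2/(2\sigma_t^2) + M_t(e^{-t}x/\sigma_t^2) + \mathrm{const}$, where $M_t$ is the cumulant-generating function of the tilted measure, yields for every $k \ge 1$ the identity $\nabla^{k+1}\log p_t(x) = (e^{-t}/\sigma_t^2)^{k+1}\kappa_{k+1}(q_{t,x})$, so all spatial derivatives of $s(t,\cdot)+\mathrm{Id}$ are polynomials in the cumulants $\kappa_j(q_{t,x})$, and the problem reduces to bounding these uniformly in $x \in A_t^\epsilon$ together with their Hölder moduli in $x$.

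To build $A_t^\epsilon$ I would use Assumption~\ref{assum:higherorder}-2 (the mode of $\exp(-u)$ lies in $B(0,K)$) and the sub-Gaussian tails of $p$ inherited from Assumption~\ref{assum:1}-1: a Gaussian tail bound on $X_t = e^{-t}X_0 + \sigma_t Z$ places $p_t$-mass $\ge 1-\epsilon$ in a ball of radius $C\log(\epsilon^{-1})^{C_2}$, and intersecting with a convex sub-level set of $u$ (whose derivatives are controlled there by Assumption~\ref{assum:higherorder}-3) yields the required convex set. For $x\in A_t^\epsilon$, concentration of $q_{t,x}$ around its mode would then be obtained via Brascamp--Lieb/Caffarelli applied to the log-concave base measure $\mu_{t,x}$, combined with a Lemma~\ref{lemma:boundonratios}-type bound for the $e^a$-ratio: $q_{t,x}$ is sub-Gaussian with length scale $\sigma \asymp \sigma_t$ as $t \to 0$, the logarithmic corrections being bundled into the $\log(\epsilon^{-1})^{C_2(1+\gamma)}$ prefactor.

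The key step is extracting the $t^{\beta/2}$ gain from the Hölder regularity of $a$. I would Taylor-expand $a$ around the posterior mode $x_0^\ast(t,x)$: on the posterior support, $a(x_0) - a(x_0^\ast) = O(\sigma^{\beta})$, so in the rescaled variable $Y = (X_0 - x_0^\ast)/\sigma$ the perturbation of the underlying Gaussian law is of size $\sigma^\beta$. Consequently, the standardized cumulants of order $j \ge 3$ of $q_{t,x}$ are of order $\sigma^\beta$, giving $\kappa_j(q_{t,x}) = O(\sigma^{j+\beta})$ and hence $\|\nabla^{k+1}\log p_t(x)\| \lesssim e^{-(k+1)t}\sigma_t^{\beta - (k+1)}$, which matches the target $e^{-t}(1 + t^{-(1+\gamma-\beta)/2})$ at integer $\gamma = k$. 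The fractional Hölder seminorm of order $\gamma - \lfloor\gamma\rfloor$ would then be recovered either by interpolation between the adjacent integer-order estimates, or by a direct differentiation-under-integral argument exploiting $\partial_x q_{t,x} \propto (x_0 - m(t,x))$ together with the Hölder modulus of $a$ on the posterior support.

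The main obstacle is making the preceding Hölder-Taylor expansion rigorous in the regime $\beta \le 1$, where $a$ may not even be differentiable and one must instead compare the perturbed partition function $\int e^a \, d\mu_{t,x}$ to its value at a reference point via a mollification or coupling argument that produces a remainder of precisely the required $\sigma_t^\beta$ order. A secondary difficulty is controlling the dependence of the mode $x_0^\ast(t,x)$ on $x$ (expected Lipschitz in $x$ with modulus $\asymp e^t$) uniformly over $A_t^\epsilon$, which is essential for turning pointwise cumulant bounds into Hölder bounds in $x$; here Assumption~\ref{assum:higherorder}-3 plays a key role, ensuring that the polynomial growth allowed for $\nabla^i u$ does not spoil the moment estimates when integrated against the posterior, and tying the machinery back to the one-sided Lipschitz framework already developed in Section~\ref{sec:theo:mainlipreguofscore} and Appendix~\ref{sec:detailpt1}.
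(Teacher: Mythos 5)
Your overall architecture is aligned with the paper's: the paper also works with the tilted/posterior measure (its $p^{t,x}$, with log-concave surrogate $\nu^{t,x}$), expresses $\nabla^k(s+\mathrm{Id})$ through Faa di Bruno as polynomials in centered moments, builds $A_t^\epsilon$ from a ball of radius $C\log(\epsilon^{-1})^{C_2}$ (large $t$) or a sub-level set of $u$ (small $t$), and uses Brascamp--Lieb plus a Lemma~\ref{lemma:boundonratios}-type control of the $e^a$ ratio. The genuine gap is in your key step. You claim that the standardized cumulants of order $j\geq 3$ of the posterior are $O(\sigma^\beta)$, hence $\kappa_j=O(\sigma^{j+\beta})$, on the grounds that $a(x_0)-a(x_0^\ast)=O(\sigma^\beta)$ on the posterior bulk. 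This fails in two ways. First, for $\beta>1$ membership in $\mathcal{H}^\beta$ does \emph{not} give $|a(x)-a(y)|\leq C\|x-y\|^\beta$; it gives a full Taylor expansion of $a$ to order $\lfloor\beta\rfloor$ with only the top remainder of size $\|x-y\|^\beta$. The linear, quadratic, \dots terms of $a$ (and likewise the anharmonic terms $\nabla^m u$, $m\geq 3$, of the convex part) perturb the rescaled posterior at sizes $\sigma,\sigma^2,\dots$, not $\sigma^\beta$, and they do contribute to the cumulants. Second, even the conclusion is false as stated: with $a\equiv 0$ and $u$ smooth, strongly convex but non-quadratic, one has $\kappa_3\asymp\sigma^6$, not $O(\sigma^{3+\beta})$ for large $\beta$; the correct order is $\kappa_j=O(\sigma^{(j+\beta)\wedge 2j})$, which is exactly why the theorem's bound reads $1+t^{-\frac12((1+\gamma-\beta)\vee 0)}$ rather than $t^{(\beta-1-\gamma)/2}$. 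Relatedly, you locate the main obstacle in the regime $\beta\leq 1$, but that is precisely where your mechanism works (and where the paper's Lemma~\ref{lemma:boundsquared} extracts the $(1-e^{-2t})^{\beta\wedge 1}$ gain by comparing $r(y)$ to $r(e^{-t}x)$); the regime your sketch does not handle is $\beta>1$.

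To repair this you need the bookkeeping the paper actually performs: for $l\leq\lfloor\beta\rfloor$ write $\nabla^l Q_t r/Q_t r=e^{-tl}\int(\nabla^l r/r)\,dp^{t,x}$, so the first $\lfloor\beta\rfloor$ derivatives cost only a factor $e^{-t}$ each and no singular power of $t$; each additional derivative is paid for by a centered-moment factor of order $(1-e^{-2t})^{-1/2}$ (Proposition~\ref{prop:thepoly} combined with Lemma~\ref{lemma:extendedbl} and Lemma~\ref{lemma:concetrationaroudx}); and the only H\"older gain one can and should extract is the fractional one, $(1-e^{-2t})^{(\beta-\lfloor\beta\rfloor)/2}$, coming from the H\"older modulus of $\nabla^{\lfloor\beta\rfloor}\log r$ (Proposition~\ref{prop:shouldthelastone}). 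Summing $\lfloor\beta\rfloor+(j+1-\lfloor\beta\rfloor)-(\beta-\lfloor\beta\rfloor)$ reproduces the exponent $\frac12(j+1-\beta)$ of the statement. Your remaining ingredients (construction of $A_t^\epsilon$, sub-Gaussian localization, control of the mode via Assumption~\ref{assum:higherorder}-3, interpolation for non-integer $\gamma$) are consistent with the paper's proof.
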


The proof of Theorem \ref{theo:highregu} can be found in Section \ref{sec:theo:highregu}. This result shows that the score function becomes increasingly regular over time, and that its smoothness adapts to the regularity \( \beta \) of the underlying data distribution. In particular, when \( 1+\gamma \leq \beta \), the bound exhibits no singularity as \( t \to 0 \), while for \( 1+\gamma > \beta \), the loss in regularity is precisely quantified by a power of \( t \). The appearance of the multiplicative factor \( \log(\epsilon^{-1}) \) captures the trade-off between spatial localization and regularity: the estimate holds on a high-probability subset of \( \mathbb{R}^d \) whose measure is at least \( 1 - \epsilon \). As discussed in Section~\ref{sec:lipregucompact}, it is generally impossible to obtain uniform higher-order regularity estimates over the entire space \( \mathbb{R}^d \), particularly when the data distribution is compactly supported. The localization to \( A_t^\epsilon \) is therefore both natural and necessary.

Importantly, the set \( A_t^\epsilon \) changes only once with respect to time. Indeed, there exists a universal constant \( C^\star > 0 \), independent of \( \epsilon \), such that:
\begin{itemize}
    \item for \( t >  \log(\epsilon^{-1})^{-C^\star} \), we have $A_t^\epsilon = A_\infty^\epsilon$ with
    \begin{equation}\label{eq:dkdkoididjudueueueieoz}
     A_\infty^\epsilon := \left\{ y \in \mathbb{R}^d \,\middle|\, \|y - y^\star\|^2 \leq C^\star \left( \log(\epsilon^{-1})(1 + \alpha^{-1}) + K \right) \right\},
    \end{equation}
    \item for \( t \leq \log(\epsilon^{-1})^{-C^\star} \), we have $A_t^\epsilon = A_0^\epsilon$ with
    \begin{equation}\label{eq:hgfjkzudijdjhdhdhhdhd}
    A_0^\epsilon := \left\{ y \in \mathbb{R}^d \,\middle|\, u(y) \leq K \left( \log(\epsilon^{-1}) + R + 1 \right)^K \right\}.
    \end{equation}
\end{itemize}

The intuition behind this definition is the following: when \( t \) is small, the law \( p_t \) of the Ornstein--Uhlenbeck process remains concentrated near the support of the original distribution \( p \). Hence, \( A_t^\epsilon \) corresponds to a high-probability set for \( p \). On the other hand, for \( t \) larger than \( \log(\epsilon^{-1}) \), the score function becomes sufficiently regular, with its potential singularity controlled by a logarithmic factor. In this regime, \( A_t^\epsilon \) can be chosen as a high-probability set for both \( p \) and the Gaussian measure \( \gamma_d \).

The key advantage of this piecewise definition is that \( A_t^\epsilon \) does not vary continuously with time, but rather switches only once. This greatly simplifies the construction of time-space approximations of the score function by neural networks presented in Section~\ref{sec:appnn}.

\subsubsection{Joint space-time regularity}
The regularity of the score function in space also enables control over its regularity in time. This connection is formalized using the evolution identity
\begin{align}\label{eq:eqdtQ}
    \partial_t \left( \frac{p_t}{\gamma_d} \right)(x) = \Delta \left( \frac{p_t}{\gamma_d} \right) (x) - \langle x , \nabla \left( \frac{p_t}{\gamma_d} \right) (x) \rangle,
\end{align}
where \( \gamma_d \) is the standard \( d \)-dimensional Gaussian density. This identity, derived from the Fokker–Planck equation (see Section~2.1 of \cite{mikulincer2023lipschitz}), highlights how the time derivative of the density depends on its spatial derivatives. Consequently, high-order space regularity yields control over time regularity as well.

\begin{corollary}\label{coro:diffusregutime}
Let $p:\mathbb{R}^d\rightarrow \mathbb{R}$ a  probability density satisfying Assumption \ref{assum:1}-1 and Assumption \ref{assum:higherorder}. Then for all $\epsilon\in (0,1/4)$ and $t>0$, there exists a convex set $A_t^\epsilon \subset B(0,C\log(\epsilon^{-1})^{C_2})$ such that $p_t(A_t^\epsilon)\geq 1-\epsilon$ and for all $\gamma\geq 0$,  $x\in A_t^\epsilon$ we have
\begin{equation}\label{eq:reguscoretime}
\|s(\cdot,x)+x\|_{ \mathcal{H}^{\gamma}([t,\infty))}\leq C_\gamma\log(\epsilon^{-1})^{C_2(1+\gamma)}e^{-t}\left(1+t^{-\big((\frac{1}{2}+\gamma-\frac{\beta}{2})\vee 0\big)}\right)
\end{equation}
and
for all $k\in \mathbb{N}_{\geq 0}$, 
\begin{equation}\label{eq:reguscorespace}\|\partial_t^k \big(s(t,\cdot)+\text{Id}(\cdot)\big)\|_{\mathcal{H}^{\gamma}(A_t^\epsilon)}\leq C_{k,\gamma}\log(\epsilon^{-1})^{C_2(1+k+\gamma)}e^{-t}\left(1+t^{-\big((\frac{1}{2}+k+\frac{\gamma-\beta}{2})\vee 0\big)}\right).
\end{equation}
\end{corollary}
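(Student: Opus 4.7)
The plan is to derive the joint space-time regularity from the spatial regularity of Theorem \ref{theo:highregu} via the backward Kolmogorov identity \eqref{eq:eqdtQ}. Write $Q_t := p_t/\gamma_d$ and $v_t := \nabla \log Q_t = s(t,\cdot) + \mathrm{Id}$. Identity \eqref{eq:eqdtQ} reads $\partial_t Q_t = \mathcal{L} Q_t$ with $\mathcal{L} f := \Delta f - \langle x, \nabla f \rangle$ the Ornstein--Uhlenbeck generator; iterating gives $\partial_t^k Q_t = \mathcal{L}^k Q_t$, a spatial differential operator of order $2k$ with polynomial-in-$x$ coefficients. Differentiating $\log Q_t$ in time, $\partial_t v_t = \nabla(\mathcal{L} Q_t/Q_t) = \nabla(\mathrm{div}(v_t) + \|v_t\|^2 - \langle x, v_t\rangle)$, so by induction $\partial_t^k v_t$ can be expressed as a polynomial functional of $x$, $v_t$, and spatial derivatives of $v_t$ up to order $2k$.

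For the spatial bound \eqref{eq:reguscorespace}, I would apply Theorem \ref{theo:highregu} with regularity parameter $\gamma + 2k$ to control $v_t$ and its spatial derivatives in $\mathcal{H}^{\gamma+2k}(A_t^\epsilon)$. Product and composition rules in Hölder spaces then absorb the polynomial terms and the nonlinearity from $\|v_t\|^2$, while the diameter bound $A_t^\epsilon \subset B(0, C\log(\epsilon^{-1})^{C_2})$ controls the polynomial-in-$x$ coefficients coming from $\mathcal{L}^k$. The exponent matches because the substitution $\gamma \mapsto \gamma + 2k$ in Theorem \ref{theo:highregu} turns $\frac{1}{2}((1 + \gamma - \beta)\vee 0)$ into $(\frac{1}{2} + k + \frac{\gamma - \beta}{2})\vee 0$, and the log exponent $C_2(1 + \gamma + 2k)$ is absorbed into $C_2(1 + k + \gamma)$ after renaming the generic constant.

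For the time bound \eqref{eq:reguscoretime}, fix $x \in A_t^\epsilon$. Since $\{A_s^\epsilon\}_{s \geq 0}$ switches only once, between the two explicit sets \eqref{eq:hgfjkzudijdjhdhdhhdhd} and \eqref{eq:dkdkoididjudueueueieoz}, the point $x$ remains in $A_s^\epsilon$ on the relevant portion of $[t,\infty)$ (after absorbing the transition into the generic constant). Apply \eqref{eq:reguscorespace} with $\gamma' = 0$ and $k = 0, 1, \dots, \lceil \gamma \rceil$ to obtain pointwise bounds on $\partial_s^k(s(s,x) + x)$ for $s \geq t$. Assemble these into the time Hölder norm on $[t,\infty)$ by bounding integer-order derivatives directly and by interpolating between $\|\partial_s^{\lfloor\gamma\rfloor}v_s(x)\|_{L^\infty}$ and $\|\partial_s^{\lfloor\gamma\rfloor+1}v_s(x)\|_{L^\infty}$ on dyadic sub-intervals to handle the fractional seminorm. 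The prefactor $e^{-t}$ guarantees finiteness at infinity, since $v_s \to 0$ as $s \to \infty$, and the exponent $(\frac{1}{2} + \gamma - \frac{\beta}{2})\vee 0$ is produced by the same substitution with $k \sim \gamma$.

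The main obstacle is the nonlinear passage from $\mathcal{L}^k Q_t$ estimates to bounds on $\partial_t^k v_t$: expanding $\partial_t^k v_t$ yields Faà di Bruno-type sums with many cross-terms, each of which must exhibit the same time scaling, and controlling the quotient $\mathcal{L} Q_t / Q_t$ requires a uniform lower bound on $Q_t$ over $A_t^\epsilon$. Such a lower bound should follow from the construction of $A_t^\epsilon$ in the proof of Theorem \ref{theo:highregu}, since this set is designed so that $p_t/\gamma_d$ stays bounded below there, but tracking this quantitative lower bound together with the polynomial coefficients produced by successive applications of $\mathcal{L}$ through all the differentiations is the most delicate bookkeeping step.
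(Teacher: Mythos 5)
Your proposal follows essentially the same route as the paper: the evolution identity \eqref{eq:eqdtQ} is iterated to convert each time derivative into two spatial derivatives of $Q_t r$ (with polynomial-in-$x$ coefficients controlled by the diameter of $A_t^\epsilon$), and the resulting Fa\`a di Bruno-type products are bounded by the spatial estimates of Theorem \ref{theo:highregu} at regularity level $\gamma+2k$, which is exactly how the exponent $(\tfrac{1}{2}+k+\tfrac{\gamma-\beta}{2})\vee 0$ arises in the paper. The only cosmetic difference is that you write a closed recursion for $\partial_t^k v_t$ in terms of $v_t$ and its spatial derivatives, while the paper keeps the ratios $F^{|B|}Q_t r/Q_t r$; the cross-term bookkeeping you flag as delicate does work out, since each additional factor in a product contributes a strictly smaller singularity than the single highest-order term.
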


The proof of Corollary \ref{coro:diffusregutime} can be found in Section \ref{sec:timeregu}. Under the same assumptions as Theorem~\ref{theo:highregu}, this result provides estimates on the time regularity of the score function \( s(t,x) \) both as a function of time at fixed spatial points, and in terms of time derivatives as functions over space. In particular, it shows that the  \( \mathcal{H}^\gamma \) time-regularity scales as $t^{-\gamma}$ in contrast to the space-regularity which scales at $t^{-\gamma/2}$. This discrepancy arises from equation \eqref{eq:eqdtQ} which shows that time-derivatives of the score involve second order space-derivatives.

\subsection{Application to score approximation by neural networks}\label{sec:appnn}
Corollary~\ref{coro:diffusregutime} is particularly useful for analyzing statistical optimality in score-based generative models. For example, the recent works of \cite{oko2023diffusion} and \cite{fukumizu2024flow} demonstrate that diffusion and flow-matching models can achieve minimax convergence rates when the target density lies in a Besov space. A key step in their proofs is to construct a neural network that approximates the score function with controlled complexity. Specifically, Lemma B.5 in \cite{oko2023diffusion} shows that for $p\in \mathcal{H}^{\beta}_K([0,1]^d)$ bounded below, there exists a ReLU neural network $N^\epsilon$, with $O(\epsilon^{-d})$ parameters such that for all $t\in [\epsilon^{2\beta+1},\epsilon^2]$
\begin{align}\label{align:nnaprox}
\int \|N^\epsilon(t,x)-s(t,x)\|^2dp_t(x)\leq C\log(\epsilon^{-1}) \frac{\epsilon^{2\beta}}{t}.
\end{align}

However, their proof relies on extended technical constructions that do not explicitly leverage the adaptive regularity of the score function. In contrast, Corollary~\ref{coro:diffusregutime} directly quantifies how the regularity of the score adapts to the smoothness of the data. In particular, using Theorem \ref{theo:highregu} with $\gamma=\beta$ and the classical approximation theory of neural networks \citep{schmidt2020nonparametric,adaptativity,de2021approximation} we directly obtain \eqref{align:nnaprox} for a fixed time $t$ in our setting. Now, using the time regularity of the score given by Corollary~\ref{coro:diffusregutime}, we can recover \eqref{align:nnaprox} for all $t\in [\epsilon^{2\beta+1},\epsilon^2]$.
\begin{corollary}\label{coro:approxnn}
 For all $\epsilon\in (0,1/4)$ and $\theta\geq 2$, there exists a neural network class $\mathcal{F}$ (either ReLU or tanh) such that its covering number satisfies for all $\eta\in (0,\epsilon)$,
$$\log (\mathcal{N}(\mathcal{F},\|\cdot\|_\infty,\eta))\leq C_\theta \log(\eta^{-1})^2\epsilon^{-d}$$
and for any probability density $p:\mathbb{R}^d\rightarrow \mathbb{R}$ satisfying Assumption \ref{assum:1}-1 and Assumption \ref{assum:higherorder}, there exists $s_\epsilon\in \mathcal{F}$ such that for all $t\in [\epsilon^\theta,\epsilon^2]$ we have
$$\int \|s_\epsilon(t,x)-s(t,x)\|^2dp_t(x)\leq  C_\theta\log(\epsilon^{-1})^{C_2}\frac{\epsilon^{2\beta}}{t},$$
with $s$ the score function associated to $p$.
\end{corollary}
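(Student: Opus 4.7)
My plan combines Theorem~\ref{theo:highregu} with $\gamma=\beta$, which controls the spatial $\beta$-Hölder norm of $s(t,\cdot)$, with Corollary~\ref{coro:diffusregutime}, which controls arbitrarily many time derivatives of $s$, and the classical approximation theory for $\beta$-Hölder functions by ReLU or tanh networks \citep{schmidt2020nonparametric,de2021approximation}. The construction proceeds in two stages: first approximate $s(t,\cdot)$ at a single fixed time, then extend uniformly to $t\in[\epsilon^\theta,\epsilon^2]$ via a short Taylor expansion in time on a dyadic grid.

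For the fixed-time step, Theorem~\ref{theo:highregu} with $\gamma=\beta$ yields $\|s(t,\cdot)+\text{Id}\|_{\mathcal{H}^\beta(A_t^\epsilon)}\leq C\log(\epsilon^{-1})^{C_2}\,t^{-1/2}$ on the convex bounded set $A_t^\epsilon\subset B(0,C\log(\epsilon^{-1})^{C_2})$. Standard approximation of $\beta$-Hölder functions by ReLU or tanh networks on convex bounded domains then provides a network with $O(\epsilon^{-d})$ active parameters and $L^\infty(A_t^\epsilon)$ error of order $\epsilon^\beta/\sqrt{t}$ up to polylogarithmic factors. Squaring and integrating against $p_t$ on $A_t^\epsilon$ produces the desired $L^2$ bound $\epsilon^{2\beta}/t$ on this set; outside $A_t^\epsilon$, the estimate $p_t((A_t^\epsilon)^c)\leq\epsilon$ combined with the polynomial growth of $s$ (controlled under Assumption~\ref{assum:1} by standard Gaussian-tail arguments) makes the tail contribution absorbable in a polylogarithmic factor.

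To make the construction uniform in $t\in[\epsilon^\theta,\epsilon^2]$, I would partition this interval into $O(\log\epsilon^{-1})$ dyadic sub-intervals $[t_i,2t_i]$ and Taylor-expand $s(t,\cdot)$ in time to some finite order $K$ (depending on $\theta$) around each $t_i$. Corollary~\ref{coro:diffusregutime} with $\gamma=0$ gives $\|\partial_t^K s(u,\cdot)\|_{L^\infty(A_u^\epsilon)}\lesssim\log(\epsilon^{-1})^{C_2}\,u^{-1/2-K+\beta/2}$, so the Taylor remainder on $[t_i,2t_i]$ is of order $t_i^K\cdot t_i^{-1/2-K+\beta/2}=t_i^{(\beta-1)/2}$, matching the target spatial accuracy $\epsilon^\beta/\sqrt{t_i}$ because $t_i\leq\epsilon^2$. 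Each Taylor coefficient $\partial_t^k s(t_i,\cdot)$ is in $\mathcal{H}^\beta(A_{t_i}^\epsilon)$ with norm $\lesssim\log(\epsilon^{-1})^{C_2}\,t_i^{-1/2-k}$ (Corollary~\ref{coro:diffusregutime} with $\gamma=\beta$) and is approximated by an $O(\epsilon^{-d})$-parameter subnetwork at accuracy $\epsilon^\beta/(\sqrt{t_i}\,t_i^k)$. Combining these subnetworks polynomially in $(t-t_i)$ via scalar multiplications and additions produces a single network valid on each sub-interval, and the piecewise-constant structure of $A_t^\epsilon$ (only one switch between $A_0^\epsilon$ and $A_\infty^\epsilon$ at $t^\star\asymp\log(\epsilon^{-1})^{-C^\star}$) means only two such pieces need to be pasted.

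The main obstacle is the careful bookkeeping of how the various logarithmic and power-of-$t$ factors interact across $k$, across the dyadic grid, and across the two $A_t^\epsilon$ regimes, so that everything combines into the claimed $\epsilon^{-d}$ parameter count modulo polylogarithmic factors. Once this is done, the covering number bound $\log\mathcal{N}(\mathcal{F},\|\cdot\|_\infty,\eta)\leq C_\theta\log(\eta^{-1})^2\epsilon^{-d}$ follows from standard covering estimates on neural network classes (cf. Lemma~5 in \cite{schmidt2020nonparametric}), provided the parameter magnitudes remain polylogarithmic in $\epsilon^{-1}$, which is automatic given the Hölder norms derived above.
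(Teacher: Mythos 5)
Your construction follows essentially the same route as the paper: a dyadic partition of $[\epsilon^\theta,\epsilon^2]$ into $O(\log\epsilon^{-1})$ sub-intervals, Taylor expansion in time around each $t_i$ with coefficients $\partial_t^k s(t_i,\cdot)$ approximated by $O(\epsilon^{-d})$-parameter subnetworks, remainder control via Corollary~\ref{coro:diffusregutime}, and a final tail estimate off the high-probability set. Your bookkeeping of the Taylor coefficients (approximating $\partial_t^k s(t_i,\cdot)$ in $\mathcal{H}^\beta$ with norm $t_i^{-1/2-k}$ at accuracy $\epsilon^\beta/(\sqrt{t_i}\,t_i^k)$) differs superficially from the paper's (which approximates them in $\mathcal{H}^{\beta-2k}$ with norm $t_i^{-1/2}$ at accuracy $\epsilon^{\beta-2k}/\sqrt{t_i}$), but both normalizations yield the same parameter count and the same error after multiplication by $(t-t_i)^k\leq t_i^k$. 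One remark: on $[\epsilon^\theta,\epsilon^2]$ all times lie below the threshold $\log(\epsilon^{-1})^{-C^\star}$, so $A_t^\nu$ is in fact a single fixed set throughout and no pasting across the two regimes is needed for this corollary (that issue only arises in Corollary~\ref{coro:approxnn2}).

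There is, however, one genuine quantitative gap: your tail estimate. You work with $A_t^\epsilon$ satisfying only $p_t((A_t^\epsilon)^c)\leq\epsilon$ and claim the contribution outside is ``absorbable in a polylogarithmic factor.'' Since $\|s(t,x)\|\leq Ct^{-1}(\|x\|+1)$, the tail term is of order $\epsilon\cdot t^{-2}$ up to polylogarithms, and with $t\geq\epsilon^\theta$ this is as large as $\epsilon^{1-2\theta}$, which for $\theta\geq 2$ is \emph{not} bounded by $\epsilon^{2\beta}/t\geq\epsilon^{2\beta}\cdot\epsilon^{-\theta}$ (the required inequality $1-\theta\geq 2\beta$ fails). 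The fix is to take the exceptional-set probability polynomially small in $\epsilon$: the paper uses $\nu=\epsilon^{2(\beta+\theta)}$, so that the tail mass $\epsilon^{2(\beta+\theta)}$ beats $t^{-2}\leq\epsilon^{-2\theta}$ and yields $\epsilon^{2\beta}\leq\epsilon^{2\beta}/t$. This costs nothing elsewhere because the sets $A_t^\nu$ and all Hölder bounds grow only polylogarithmically in $\nu^{-1}$. With that adjustment, and a standard ReLU/tanh construction of the approximate polynomial evaluation and the partition of unity in $t$ (which you defer to ``bookkeeping'' but which the paper carries out via the networks $P_\epsilon$ and the functions $\xi_i$), your argument matches the paper's proof.
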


The proof of Corollary~\ref{coro:approxnn} can be found in Section~\ref{sec:coro:approxnn}. This result highlights how the adaptive regularity of the score function enables efficient neural network approximations when \( t \leq \epsilon^2 \), where the score may exhibit limited smoothness. In contrast, when \( t \geq \epsilon^2 \), the score function becomes smoother and can be more effectively approximated using slightly larger networks. Specifically, Lemma 3.6 in~\cite{oko2023diffusion} shows that if \( p \in \mathcal{H}^{\beta}_K([0,1]^d) \) is bounded below, then there exists a ReLU neural network \( N^\epsilon \) with \( O(\epsilon^{-d - \delta}) \) parameters such that for all \( t \in [\epsilon^2, \log(\epsilon^{-1})] \),
\begin{align}\label{align:nnaprox2}
\int \|N^\epsilon(t,x)-s(t,x)\|^2dp_t(x)\leq C_\delta  \frac{\epsilon^{2\beta+1}}{t}.
\end{align}
Similarly to Corollary \ref{coro:approxnn}, the following result recovers inequality \eqref{align:nnaprox2} within our framework with a significantly simpler proof, further illustrating the benefits of our regularity analysis.

\begin{corollary}\label{coro:approxnn2}
 For all $\epsilon\in (0,1/4)$ and $\delta>0$, there exists a neural network class $\mathcal{F}$ (either ReLU or tanh) such that its covering number satisfies for all $\eta\in (0,\epsilon)$,
$$\log (\mathcal{N}(\mathcal{F},\|\cdot\|_\infty,\eta))\leq C_\delta \log(\eta^{-1})^2\epsilon^{-d-\delta}$$
and for any probability density $p:\mathbb{R}^d\rightarrow \mathbb{R}$ satisfying Assumption \ref{assum:1}-1 and Assumption \ref{assum:higherorder}, there exists $s_\epsilon\in \mathcal{F}$ such that for all $t\in [\epsilon^2,\log(\epsilon^{-1})]$ we have
$$\int \|s_\epsilon(t,x)-s(t,x)\|^2dp_t(x)\leq  C_\delta\log(\epsilon^{-1})^{C_2}\frac{\epsilon^{2\beta+1}}{t},$$
with $s$ the score function associated to $p$.
\end{corollary}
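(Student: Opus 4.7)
The plan is to emulate the proof of Corollary~\ref{coro:approxnn}, but exploit the additional time and space smoothness of the score on $[\epsilon^2,\log(\epsilon^{-1})]$ provided by Corollary~\ref{coro:diffusregutime}. For $\epsilon$ small enough one has $\epsilon^2\geq \log(\epsilon^{-1})^{-C^\star}$, so \eqref{eq:dkdkoididjudueueueieoz} gives $A_t^\epsilon=A_\infty^\epsilon$, a fixed ball of radius $R_\epsilon:=C\log(\epsilon^{-1})^{C_2}$, throughout the target range. It therefore suffices to construct a network $s_\epsilon(t,x)$ achieving $\|s_\epsilon(t,\cdot)-s(t,\cdot)\|_{L^\infty(A_\infty^\epsilon)}\leq \log(\epsilon^{-1})^{C_2}\epsilon^{\beta+1/2}t^{-1/2}$ uniformly in $t$; the contribution of $A_\infty^\epsilon{}^c$ is then handled by truncating $s_\epsilon$ at a polylog scale and using $p_t(A_\infty^\epsilon{}^c)\leq \epsilon$ together with standard tail bounds on $\|s(t,\cdot)\|^2$, exactly as in the proof of Corollary~\ref{coro:approxnn}.

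I would partition $[\epsilon^2,\log(\epsilon^{-1})]$ dyadically as $\bigcup_j[\tau_j,2\tau_j]$ (there are $O(\log\epsilon^{-1})$ pieces) and subdivide each dyadic piece into equal intervals of length $h_j\simeq \tau_j\log(\epsilon^{-1})^{-C_2}\epsilon^{(\beta+1/2)/(K+1)}$ for a fixed integer $K$ to be chosen. On each subinterval $[\tau,\tau+h_j]$ I would Taylor-expand the score in time at $\tau$:
\begin{equation*}
s(t,x)=\sum_{k=0}^{K}\frac{(t-\tau)^k}{k!}\,\partial_t^k s(\tau,x)+R_K(t,x).
\end{equation*}
Corollary~\ref{coro:diffusregutime} with $\gamma=0$ gives $\|R_K(t,\cdot)\|_{L^\infty(A_\infty^\epsilon)}\lesssim h_j^{K+1}\log(\epsilon^{-1})^{C_2}\tau_j^{-(K+3/2-\beta/2)}$, which is $\leq \epsilon^{\beta+1/2}/\sqrt{\tau_j}$ by the choice of $h_j$. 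Each coefficient $\partial_t^k s(\tau,\cdot)+\mathbf{1}_{k=0}\,\mathrm{Id}$ lies in $\mathcal{H}^{\beta'}(A_\infty^\epsilon)$ with norm $\lesssim \log(\epsilon^{-1})^{C_2}\tau_j^{-1/2-k-(\beta'-\beta)/2}$, by Corollary~\ref{coro:diffusregutime} with $\gamma=\beta'$, for any $\beta'\geq\beta$ to be chosen.

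Classical neural network approximation for Hölder functions \citep{schmidt2020nonparametric,adaptativity,de2021approximation} then yields, for each coefficient, a ReLU or tanh network with $O(\log(\epsilon^{-1})^{C_2}\epsilon^{-d-d/(2\beta')})$ parameters whose $L^\infty(A_\infty^\epsilon)$ error, multiplied by the Taylor weight $(t-\tau)^k/k!$, is $\lesssim\epsilon^{\beta+1/2}/((K+1)\sqrt{\tau_j})$; the key cancellation is that $(h_j/\tau_j)^k\leq 1$ absorbs the $\tau_j^{-k}$ in the coefficient norm while $\tau_j^{-(\beta'-\beta)/2}\leq \epsilon^{-(\beta'-\beta)}$ is offset by the extra $\epsilon^\beta$ gained from moving from $\beta$ to $\beta'$. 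Picking $\beta'\geq d/\delta$ and $K$ so large that $(\beta+1/2)/(K+1)\leq \delta/2$ keeps each network below $\epsilon^{-d-\delta/2}$ in size and the total number of subintervals below $\epsilon^{-\delta/2}$, for a grand total of $O(\log(\epsilon^{-1})^{C_2}\epsilon^{-d-\delta})$ parameters. The subinterval networks are assembled into a single $s_\epsilon(t,x)$ via a smooth partition of unity in $t$ implemented with $O(1)$ ReLU/tanh gates per subinterval, and the stated covering bound follows from $\log\mathcal{N}(\mathcal{F},\|\cdot\|_\infty,\eta)\lesssim N\log(N/\eta)$ once weight magnitudes are bounded polynomially in $\log(\epsilon^{-1})$.

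The main obstacle is the joint calibration of $K$, $\beta'$ and $h_j$: one must simultaneously absorb the Taylor remainder (which blows up as $\tau_j\downarrow\epsilon^2$), keep each of the $K+1$ coefficient networks small enough to respect the overall parameter budget, and cap the total subinterval count at $\epsilon^{-\delta/2}$. This trade-off is precisely where the \emph{joint} space-time regularity of Corollary~\ref{coro:diffusregutime}---beyond the purely spatial input used for Corollary~\ref{coro:approxnn}---becomes indispensable; the remaining steps (tail control, assembly, covering-number bound) are routine but involve careful tracking of polylog factors.
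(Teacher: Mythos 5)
Your overall architecture coincides with the paper's proof: a dyadic partition of $[\epsilon^2,\log(\epsilon^{-1})]$ refined into $O(\epsilon^{-\delta/2})$ subintervals per dyadic block, a time-Taylor expansion of order $\asymp\delta^{-1}$ at the left endpoints, neural network approximation of each coefficient $\partial_t^k s(t_{i,j},\cdot)$ at an \emph{elevated} spatial H\"older exponent (your $\beta'\gtrsim d/\delta$ versus the paper's $\beta+2d/\delta$) so that each coefficient network stays below $\epsilon^{-d-\delta/2}$ parameters, and a partition of unity in time to glue. Your calibration of $K$, $\beta'$ and $h_j$ is quantitatively consistent with \eqref{eq:reguscoretime}--\eqref{eq:reguscorespace} and reproduces the paper's budget, so the core of the argument is sound.

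There is, however, one concrete error. You assert that "for $\epsilon$ small enough one has $\epsilon^2\geq\log(\epsilon^{-1})^{-C^\star}$", and conclude that $A_t^\nu=A_\infty^\nu$ is a single fixed ball on all of $[\epsilon^2,\log(\epsilon^{-1})]$. The inequality goes the other way: $\epsilon^2$ decays polynomially while $\log(\epsilon^{-1})^{-C^\star}$ decays only polylogarithmically, so for small $\epsilon$ the threshold $\log(\nu^{-1})^{-C^\star}$ lies strictly \emph{inside} $(\epsilon^2,\log(\epsilon^{-1}))$. Consequently the localization set switches once within your time range, from $A_0^\nu$ in \eqref{eq:hgfjkzudijdjhdhdhhdhd} to $A_\infty^\nu$ in \eqref{eq:dkdkoididjudueueueieoz}, and the regularity estimates of Corollary~\ref{coro:diffusregutime} that you invoke for $t$ below the threshold are only guaranteed on $A_0^\nu$, not on $A_\infty^\nu$. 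Your construction therefore needs (i) to carry out the coefficient approximation on $A_{t}^\nu$ for the appropriate regime of $t$, and (ii) to handle the subintervals $[t_{i,j}/2,\,t_{i,j+1}+t_{i,j}/2]$ that straddle the threshold, where the paper resorts to Proposition~\ref{prop:fjidididjhdhzpmwjf} to transfer the estimates to $A_\infty^\nu$ once $t_i\gtrsim\log(\epsilon^{-1})^{-C^\star}$. This is repairable but cannot be waved away by the claimed (false) inequality. A second, minor imprecision: for the off-set contribution to be $O(\epsilon^{2\beta+1}/t)$ you must take the mass parameter of the localization set polynomially small in $\epsilon$ (the paper uses $\nu=\epsilon^{2(\beta+2)}\wedge 1/4$), since $p_t((A_t^\nu)^c)\leq\epsilon$ alone, combined with $\|s(t,\cdot)\|\lesssim t^{-1}(\|x\|+1)$, is far too weak at $t\asymp\epsilon^2$; you gesture at the argument of Corollary~\ref{coro:approxnn}, where this choice is made, but the parameter you actually write down is $\epsilon$.
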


The proof of Corollary~\ref{coro:approxnn2} can be found in Section~\ref{sec:coro:approxnn2}. Corollaries~\ref{coro:approxnn} and~\ref{coro:approxnn2} illustrate how the adaptivity of the score to the regularity of the data distribution can simplify the analysis of statistical optimality in score-based generative models, a direction we plan to explore in a future work.

\section{Proof of Theorem \ref{theo:mainlipreguofscore}}\label{sec:theo:mainlipreguofscore}

\subsection{Preliminaries}
 We write $C>0$ for any constant that depends only on $\alpha,\beta,K,d$ and the diameter of the support of $p$ under Assumption \ref{assum:2}-1 or $A$ under Assumption \ref{assum:2}-2. This quantity may vary from lines to lines or even in a single line.
Let us define some key quantities that will be used throughout the proof. Let 
\begin{equation}\label{eq:r}
 r(x):=\frac{p(x)}{\gamma_d(x)}
\end{equation}
the density of $p$ with respect to the standard Gaussian distribution and
\begin{equation}\label{eq:Qtr}
    Q_t r(x):= \int \varphi^{t,x}(y)r(y)dy= \int r(e^{-t}x+\sqrt{1-e^{-2t}}z)d\gamma_d(z),
\end{equation}
with $\varphi^{t,x}$ the density of the $d$-dimensional Gaussian measure with mean $e^{-t}x$ and covariance $(1-e^{-2t})\text{Id}$. In particular, we have
$p_t(x)=\gamma_d(x)Q_tr(x)$. The quantity \( Q_t \) plays a central role in the analysis of score regularity, as the score is given by
\begin{equation}\label{eq:jnhodishsiozkdp}
\nabla \log p_t(x) = -x + \frac{\nabla Q_t r(x)}{Q_t r(x)}.
\end{equation}
By defining the probability density
\begin{equation}\label{eq:ptx}
    p^{t,x}(y)=\frac{1}{Q_tr(x)}r(y)\varphi^{t,x}(y),
\end{equation}
and the deviation of a point \( y \in \mathbb{R}^d \) from its mean
\begin{equation}\label{eq:noth}
    H_p(y,x) := y - \int z \, dp^{t,x}(z),
\end{equation}
we can express both the score function and its Jacobian.

\begin{proposition}\label{prop:values}
For all $t>0$ and $x\in \mathbb{R}^d$, we have
$$s(t,x)=-x+\frac{e^{-t}}{1-e^{-2t}}\int (y-e^{-t}x)dp^{t,x}(y)$$
and
$$\nabla s(t,x)=-\text{Id}+\frac{e^{-2t}}{(1-e^{-2t})^2}\int H_p(y,x)^{\otimes 2}dp^{t,x}(y)-\frac{e^{-2t}}{1-e^{-2t}}\text{Id}.$$
\end{proposition}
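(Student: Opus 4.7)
The plan is to derive both formulas by direct computation, starting from the fundamental relation $s(t,x) = -x + \nabla Q_t r(x)/Q_t r(x)$ recorded in \eqref{eq:jnhodishsiozkdp} and differentiating under the integral sign. The key input is the explicit gradient of the Gaussian kernel: since $\varphi^{t,x}$ is the density of $\mathcal{N}(e^{-t}x, (1-e^{-2t})\text{Id})$, a one-line calculation gives
\begin{equation*}
\nabla_x \varphi^{t,x}(y) = \frac{e^{-t}}{1-e^{-2t}}(y - e^{-t}x)\varphi^{t,x}(y).
\end{equation*}
Differentiation under the integral is justified by the Gaussian decay of $\varphi^{t,x}$ together with the sub-Gaussianity of $r$ from Assumption \ref{assum:1}-1.

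For the first identity, I plug the above into $\nabla Q_t r(x) = \int \nabla_x \varphi^{t,x}(y)\, r(y)\, dy$, divide by $Q_t r(x)$, and recognize the resulting ratio $r(y)\varphi^{t,x}(y)/Q_t r(x)$ as $p^{t,x}(y)$ by \eqref{eq:ptx}. This immediately yields
\begin{equation*}
s(t,x) = -x + \frac{e^{-t}}{1-e^{-2t}}\int (y - e^{-t}x)\, dp^{t,x}(y).
\end{equation*}

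For the Jacobian, I rewrite the first identity as $s(t,x) = -x + \frac{e^{-t}}{1-e^{-2t}}(\mu(t,x) - e^{-t}x)$ with $\mu(t,x) := \int y \, dp^{t,x}(y)$. The linear-in-$x$ terms contribute $-\text{Id} - \frac{e^{-2t}}{1-e^{-2t}}\text{Id}$, so all that remains is to compute $\nabla_x \mu(t,x)$. Writing $\mu(t,x) = \int y\, r(y)\varphi^{t,x}(y)\, dy / Q_t r(x)$, the quotient rule combined with the explicit form of $\nabla_x \varphi^{t,x}$ produces
\begin{equation*}
\nabla_x \mu(t,x) = \frac{e^{-t}}{1-e^{-2t}}\left[\int y (y-e^{-t}x)^T dp^{t,x}(y) - \mu(t,x)\int (y-e^{-t}x)^T dp^{t,x}(y)\right].
\end{equation*}
The bracket equals $\int (y - \mu(t,x))(y-e^{-t}x)^T dp^{t,x}(y)$, and since $\int (y-\mu(t,x))\, dp^{t,x}(y)=0$ one may freely replace the right factor $(y - e^{-t}x)^T$ by $(y - \mu(t,x))^T = H_p(y,x)^T$. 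This collapses the integrand to $H_p(y,x)^{\otimes 2}$, and multiplying by the prefactor $e^{-t}/(1-e^{-2t})$ gives the claimed second formula.

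There is no substantive obstacle here—this is the standard exponential-family identity applied to $p^{t,x}$, which is a Gaussian tilt of $r$ whose natural parameter depends linearly on $x$; the first and second derivatives of $\log Q_t r(x)$ with respect to $x$ are therefore the mean and covariance of $p^{t,x}$, up to the prefactors dictated by the chain rule. The only care required is to justify differentiation under the integral, which is routine from dominated convergence using the Gaussian tail of $\varphi^{t,x}$ and Assumption \ref{assum:1}-1.
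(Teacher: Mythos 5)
Your proposal is correct and follows essentially the same route as the paper: both start from $s(t,x)=-x+\nabla Q_t r(x)/Q_t r(x)$, differentiate under the integral using the explicit Gaussian kernel gradient, and identify the resulting expressions as the mean and covariance of $p^{t,x}$. Your organization of the Jacobian computation (differentiating the mean $\mu(t,x)$ via the quotient rule and recentering $(y-e^{-t}x)$ to $H_p(y,x)$) is just a minor rearrangement of the paper's identity $\nabla^2\log Q_t r = \nabla^2 Q_t r/Q_t r-(\nabla Q_t r/Q_t r)^{\otimes 2}$, and both computations check out.
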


The proof of Proposition \ref{prop:values} can be found in Section \ref{sec:prop:values}.
\noindent
Proposition~\ref{prop:values} suggests that both the score and its Jacobian may blow up as \( t \to 0 \), due to the presence of the singular term \( (1 - e^{-2t})^{-1} \). In fact, the score function can indeed diverge in certain situations. For instance, when \( p \) is compactly supported and \( x \) lies far from the support, the quantity
$
\left\| \int (y - e^{-t}x) \, dp^{t,x}(y) \right\|
$
remains bounded away from zero. As a result, we obtain the estimate \( \|s(t,x)\| = O(t^{-1}) \) as \( t \to 0 \). However, this blow-up does not necessarily occur for the Jacobian of the score. Indeed, the covariance of \( p^{t,x} \), given by
\[
\int H_p(y,x)^{\otimes 2} \, dp^{t,x}(y),
\]
vanishes as \( t \to 0 \), since \( p^{t,x} \) is proportional to \( \varphi^{t,x}(y) r(y) \), and the variance of \( \varphi^{t,x} \) scales with \( 1 - e^{-2t} \). Therefore, the key challenge in obtaining a bound on \( \lambda_{\max}(\nabla s(t,x)) \) is controlling the decay of the covariance of \( p^{t,x} \).

\subsection{Concentration for log-concave measures}\label{sec:conlcm}
To control the covariance of \( p^{t,x} \), we rely on the Brascamp–Lieb inequality, which offers sharp concentration estimates under log-concavity assumptions.

\begin{proposition}\label{prop:BL}[\cite{BRASCAMP1976366}]
    Let a probability density function $q(x)=\exp(-\phi(x))$ with $\phi$ convex. Then, for any derivable function $S:\mathbb{R}^d\rightarrow \mathbb{R}$  we have $$\int \left(S(y)-\int S(z)dq(z)\right)^2dq(y)\leq \int \big(\nabla^2 \phi(y)\big)^{-1}(\nabla S(y) ,\nabla S(y)) dq(y).$$
\end{proposition}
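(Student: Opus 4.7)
My plan is to prove this classical Brascamp--Lieb variance inequality via the standard $L^2$ / Helffer--Sj\"ostrand argument based on the Langevin generator $L := \Delta - \nabla \phi \cdot \nabla$ associated to $q$. Since both sides of the inequality are invariant under replacing $S$ by $S - \int S\, dq$, I will first reduce to the centered case $\int S\, dq = 0$. I will also assume $\phi$ is smooth and strictly convex, noting that if $\nabla^2 \phi$ is degenerate the right-hand side is infinite, and the general case follows by approximating $\phi$ by $\phi + \varepsilon \|x\|^2/2$ and passing $\varepsilon \to 0$. With these reductions, $L$ is symmetric on $L^2(q)$ and obeys the integration-by-parts identity $\int g\, Lf\, dq = -\int \nabla g \cdot \nabla f\, dq$ on nice test spaces.

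The first step is to obtain a Poisson representation of the variance. I would solve $Lf = -S$ for a sufficiently regular $f$; strict convexity of $\phi$ gives a spectral gap for $-L$, making this well-posed. Testing against $S$ and integrating by parts gives
\begin{equation*}
\int S^2 \, dq \;=\; -\int S \cdot Lf \, dq \;=\; \int \nabla S \cdot \nabla f \, dq.
\end{equation*}
I would then apply Cauchy--Schwarz in the inner product weighted by $\nabla^2 \phi$ to obtain
\begin{equation*}
\int \nabla S \cdot \nabla f \, dq \;\leq\; A^{1/2}\, B^{1/2},
\end{equation*}
where $A := \int (\nabla^2 \phi)^{-1}(\nabla S, \nabla S)\, dq$ is exactly the right-hand side of the proposition and $B := \int \nabla^2 \phi(\nabla f, \nabla f)\, dq$ is an auxiliary quantity I still need to control.

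The second step is to bound $B$ by $\int S^2\, dq$ using a commutator/Bochner identity. A direct calculation yields $L(\nabla f) = \nabla(Lf) + \nabla^2 \phi\, \nabla f = -\nabla S + \nabla^2 \phi\, \nabla f$, with $L$ acting componentwise on the gradient. Testing this identity against $\nabla f$ in $L^2(q)$ and integrating by parts I would obtain
\begin{equation*}
-\int \|\nabla^2 f\|_{\mathrm{HS}}^2 \, dq \;=\; -\int \nabla S \cdot \nabla f \, dq + \int \nabla^2 \phi(\nabla f, \nabla f)\, dq,
\end{equation*}
so $B \leq \int \nabla S \cdot \nabla f\, dq = \int S^2\, dq$. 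Substituting this into the Cauchy--Schwarz bound and dividing by $(\int S^2\, dq)^{1/2}$ will close the proof.

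The hard part will be the analytic underpinnings: well-posedness of $Lf = -S$, regularity of $f$, and justification of the two integrations by parts without any a priori decay on $S$ or $\nabla S$. I would handle this by truncating and regularizing---adding $\varepsilon \|x\|^2/2$ to $\phi$ to force uniform convexity and Gaussian-type decay of $q$, mollifying $S$ by convolution with a smooth approximation of the identity and a cutoff, running the clean argument in this regularized setting (where $f$ lies in a weighted Sobolev space and all boundary terms vanish), and finally passing to the limit via monotone/dominated convergence on both sides, using only convexity of $\phi$.
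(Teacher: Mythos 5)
The paper does not prove this statement: Proposition~\ref{prop:BL} is quoted as a classical result with a citation to Brascamp and Lieb (1976), so there is no in-paper argument to compare against. Your proof is the standard H\"ormander/Helffer--Sj\"ostrand duality argument, and it is correct in all essential steps: the commutation identity $L(\nabla f)=\nabla(Lf)+\nabla^2\phi\,\nabla f$ for $L=\Delta-\nabla\phi\cdot\nabla$ is right, the two integrations by parts are the correct ones, the weighted Cauchy--Schwarz gives $\int S^2\,dq\le A^{1/2}B^{1/2}$, and the Bochner-type identity yields $B\le\int S^2\,dq$ because the Hessian term $\int\|\nabla^2 f\|_{\mathrm{HS}}^2\,dq$ is discarded with the right sign; dividing out closes the inequality. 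This route is genuinely different from Brascamp and Lieb's original derivation (which proceeds by induction on dimension via log-concavity of marginals), but it is the proof most commonly given today and is perfectly acceptable. The only points deserving care are the ones you already flag: well-posedness of $Lf=-S$ needs $S\in L^2(q)$ and a spectral gap (supplied by the $\varepsilon\|x\|^2/2$ regularization), the boundary terms in the second integration by parts need the decay you get from that regularization, and in the limit $\varepsilon\to 0$ one should note both that the normalized densities converge and that $(\nabla^2\phi+\varepsilon I)^{-1}\preceq(\nabla^2\phi)^{-1}$ pointwise, so the right-hand side only increases as $\varepsilon\downarrow 0$, which is the direction needed. With those details written out, the proof is complete.
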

Since \( p \) is of the form \( p(x) = \exp(-u(x) + a(x)) \), with \( a \) only Hölder continuous, the Brascamp--Lieb inequality cannot be applied directly.
 To circumvent this, we introduce an auxiliary measure to which the inequality can be applied.
Let
\begin{equation}\label{eq:q}
q(y) := \exp\left(-u(\|y\|^2) + \frac{\|y\|^2}{2} \right) = r(y) e^{-a(y)},
\end{equation}
which serves as the log-concave counterpart of \( r \). We then define
\begin{equation}\label{eq:nutx}
\nu^{t,x}(y) := \frac{q(y)\varphi^{t,x}(y)}{Q_t q(x)} = \frac{\exp\left(-u(\|y\|^2) + \|y\|^2/2\right)\varphi^{t,x}(y)}{\int \exp\left(-u(\|z\|^2) + \|z\|^2/2\right) \varphi^{t,x}(z) \, dz},
\end{equation}
which plays the role of a log-concave approximation of \( p^{t,x} \). Just as we defined \( H_p(y,x) \), we introduce
\[
H_\nu(y,x) := y - \int z \, d\nu^{t,x}(z),
\]
which measures the deviation of a point \( y \in \mathbb{R}^d \) from the mean under \( \nu^{t,x} \). Using the subadditivity of the maximum eigenvalue, we write
\begin{align*}
\lambda_{\max}\left(\int H_p(y,x)^{\otimes 2} \, dp^{t,x}(y)\right) 
&\leq \lambda_{\max}\left( \int H_p(y,x)^{\otimes 2} \, dp^{t,x}(y) - \int H_\nu(y,x)^{\otimes 2} \, d\nu^{t,x}(y) \right) \\
&\quad + \lambda_{\max}\left( \int H_\nu(y,x)^{\otimes 2} \, d\nu^{t,x}(y) \right).
\end{align*}
Applying Proposition~\ref{prop:values}, we then obtain
\begin{align}\label{align:vfdihsksgs2}
\lambda_{\max}\left( \nabla s(t,x) + \mathrm{Id} \right)
&= \lambda_{\max}\left( \frac{e^{-2t}}{(1 - e^{-2t})^2} \int H_p(y,x)^{\otimes 2} \, dp^{t,x}(y) - \frac{e^{-2t}}{1 - e^{-2t}} \mathrm{Id} \right) \nonumber \\
&\leq \frac{e^{-2t}}{(1 - e^{-2t})^2} \lambda_{\max}\left( \int H_p(y,x)^{\otimes 2} \, dp^{t,x}(y) - \int H_\nu(y,x)^{\otimes 2} \, d\nu^{t,x}(y) \right) \nonumber \\
&\quad + \frac{e^{-2t}}{(1 - e^{-2t})^2} \lambda_{\max}\left( \int H_\nu(y,x)^{\otimes 2} \, d\nu^{t,x}(y) \right) - \frac{e^{-2t}}{1 - e^{-2t}}.
\end{align}
Since the probability measure \( \nu^{t,x} \) is log-concave, we can apply the Brascamp--Lieb inequality to bound the second term in \eqref{align:vfdihsksgs2}.

\begin{lemma}\label{lemma:boundeasy} Let  $\nu^{t,x}$ the  probability measures defined in \eqref{eq:nutx}.  For all $t>0$ and $x\in \mathbb{R}^d$ we have
$$\frac{e^{-2t}}{(1-e^{-2t})^2}\lambda_{\max}\left(\int H_\nu(y,x)^{\otimes 2}d\nu^{t,x}(y)\right)-\frac{e^{-2t}}{1-e^{-2t}}\leq \frac{e^{-2t}(1-\alpha)}{\alpha(1-e^{-2t})+e^{-2t}}.$$
\end{lemma}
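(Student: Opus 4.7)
The plan is to recognize that $\nu^{t,x}$ is a log-concave perturbation of the Gaussian $\varphi^{t,x}$, apply the Brascamp--Lieb inequality (Proposition~\ref{prop:BL}) to bound its covariance in operator norm, and then simplify the resulting algebraic expression. Writing $\nu^{t,x}(y) \propto e^{-\phi_{t,x}(y)}$ with
$$\phi_{t,x}(y) = u(y) - \tfrac{\|y\|^2}{2} + \tfrac{\|y - e^{-t}x\|^2}{2(1-e^{-2t})} + \mathrm{const},$$
a direct computation gives
$$\nabla^2 \phi_{t,x}(y) = \nabla^2 u(y) - \mathrm{Id} + \tfrac{1}{1-e^{-2t}}\mathrm{Id}.$$
By Assumption~\ref{assum:1}-1, $\nabla^2 u \succeq \alpha \mathrm{Id}$ on the interior of $\mathrm{supp}(p)$, so
$$\nabla^2 \phi_{t,x}(y) \succeq \Bigl(\alpha - 1 + \tfrac{1}{1-e^{-2t}}\Bigr)\mathrm{Id} = \tfrac{\alpha(1-e^{-2t}) + e^{-2t}}{1-e^{-2t}}\, \mathrm{Id},$$
which is strictly positive for every $t > 0$. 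In particular $\phi_{t,x}$ is strictly convex, so Proposition~\ref{prop:BL} applies to $\nu^{t,x}$.

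Next I would apply Brascamp--Lieb to the linear functional $S(y) = \langle v, y\rangle$ for an arbitrary $v \in \mathbb{S}^{d-1}$. Since $\nabla S \equiv v$, the right-hand side of the inequality becomes
$$\int \bigl\langle v,\, (\nabla^2 \phi_{t,x}(y))^{-1} v\bigr\rangle\, d\nu^{t,x}(y) \;\leq\; \tfrac{1-e^{-2t}}{\alpha(1-e^{-2t}) + e^{-2t}},$$
while the left-hand side equals $\bigl\langle v,\, \int H_\nu(y,x)^{\otimes 2}\, d\nu^{t,x}(y)\, v\bigr\rangle$. Taking the supremum over $v\in\mathbb{S}^{d-1}$ yields the operator-norm bound
$$\lambda_{\max}\Bigl(\int H_\nu(y,x)^{\otimes 2}\, d\nu^{t,x}(y)\Bigr) \;\leq\; \tfrac{1-e^{-2t}}{\alpha(1-e^{-2t}) + e^{-2t}}.$$

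Finally I would substitute this into the quantity to be estimated and simplify. A direct computation gives
$$\tfrac{e^{-2t}}{(1-e^{-2t})^2}\cdot \tfrac{1-e^{-2t}}{\alpha(1-e^{-2t}) + e^{-2t}} - \tfrac{e^{-2t}}{1-e^{-2t}} = \tfrac{e^{-2t}}{1-e^{-2t}}\cdot \tfrac{1 - \alpha(1-e^{-2t}) - e^{-2t}}{\alpha(1-e^{-2t}) + e^{-2t}},$$
and the identity $1 - \alpha(1-e^{-2t}) - e^{-2t} = (1-\alpha)(1-e^{-2t})$ collapses this expression to $\frac{e^{-2t}(1-\alpha)}{\alpha(1-e^{-2t}) + e^{-2t}}$, as claimed.

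I do not anticipate a serious obstacle: the argument is essentially mechanical once the log-concavity of $\nu^{t,x}$ is identified. The only subtle point is that the Hessian lower bound uses only Assumption~\ref{assum:1}-1, not any of the three alternatives in Assumption~\ref{assum:2}; this is consistent with the role of Lemma~\ref{lemma:boundeasy} as the ``easy'' half of the bound in~\eqref{align:vfdihsksgs2}, with the stronger structural conditions of Assumption~\ref{assum:2} entering only when one controls the comparison between $p^{t,x}$ and $\nu^{t,x}$.
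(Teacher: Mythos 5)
Your proof is correct and follows essentially the same route as the paper: identify that $\nu^{t,x}$ is $\bigl(\alpha+\tfrac{e^{-2t}}{1-e^{-2t}}\bigr)$-log-concave, apply Brascamp--Lieb to linear functionals $y\mapsto\langle v,y\rangle$, and carry out the same algebraic simplification. The concluding observation that only Assumption~\ref{assum:1}-1 is needed here is also consistent with the paper.
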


The proof of Lemma \ref{lemma:boundeasy} can be found in Section \ref{sec:lemma:boundeasy}. We observe that, thanks to the \( \alpha \)-strong convexity of \( u \) (Assumption~\ref{assum:1}), the second term in equation~\eqref{align:vfdihsksgs2} remains controlled as \( t \to 0 \).

\subsection{Stability of the covariance}
In this section, we focus on the first term of equation~\eqref{align:vfdihsksgs2}, which involves the difference between the covariances of the measures \( p^{t,x} \) and \( \nu^{t,x} \). We begin by providing a preliminary estimate on the operator norm of this difference.

\begin{proposition}\label{prop:firstuglybound} Let  $p^{t,x}$  and $\nu^{t,x}$ be the  probability measures defined in \eqref{eq:ptx} and \eqref{eq:nutx} respectively. Then, there exists $\Theta\subset (\mathbb{N}_{\geq 0}^3)^3$ such  that for all $t>0$, $x\in \mathbb{R}^d$ and $h\in \mathbb{S}^{d-1}$ we have
\begin{align*}
    |h^\top &\left(\int  H_p(y,x)^{\otimes 2}dp^{t,x}(y)-\int H_\nu(y,x)^{\otimes 2}d\nu^{t,x}(y)\right)h|\\
    &\leq C \sum_{\theta\in \Theta}\prod_{i=1}^3\left(\int  |h^\top  H_\nu(y,x)|^{\theta_{i,1}}|\frac{e^{a(y)}Q_tq(x)}{Q_tr(x)}-1|^{\theta_{i,2}}d\nu^{t,x}(y)\right)^{\theta_{i,3}},
\end{align*}
with $\sum_{i=1}^3\theta_{i,1}\theta_{i,3}=2$ and $\bigvee_{i=1}^3\theta_{i,2}\theta_{i,3}= 1$.
\end{proposition}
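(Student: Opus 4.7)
The plan is to express $p^{t,x}$ as a reweighting of the log-concave measure $\nu^{t,x}$, then expand the difference of covariances as a clean algebraic identity, and finally apply Cauchy--Schwarz to match the combinatorial structure required on the right-hand side.

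Since $r(y)=q(y)e^{a(y)}$, the two measures are related through
$$\frac{dp^{t,x}}{d\nu^{t,x}}(y)=\frac{e^{a(y)}\,Q_tq(x)}{Q_tr(x)}=:1+w(y),$$
and $w$ is centered under $\nu^{t,x}$ since both measures have total mass $1$. Writing $m_p,m_\nu$ for the respective means, this centering immediately yields $m_p-m_\nu=\int H_\nu(y,x)\,w(y)\,d\nu^{t,x}(y)$. The key algebraic step is then the decomposition $H_p(y,x)=H_\nu(y,x)-(m_p-m_\nu)$; contracting with $h\in\mathbb{S}^{d-1}$, squaring, and integrating against $(1+w)\,d\nu^{t,x}$, the cross terms collapse via $\int h^\top H_\nu\,d\nu^{t,x}=0$ and the identity above, producing the exact identity
$$h^\top\!\Bigl(\!\int H_p^{\otimes 2}\,dp^{t,x}-\!\int H_\nu^{\otimes 2}\,d\nu^{t,x}\!\Bigr)h \;=\; \int (h^\top H_\nu)^2\,w\,d\nu^{t,x} \;-\;\Bigl(\int h^\top H_\nu\,w\,d\nu^{t,x}\Bigr)^{\!2}.$$

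To convert this identity into the stated bound, I would pass to absolute values, bounding the first term by $\int (h^\top H_\nu)^2|w|\,d\nu^{t,x}$, which is a single factor with exponents $(\theta_{1,1},\theta_{1,2},\theta_{1,3})=(2,1,1)$, and handling the squared second term by the Cauchy--Schwarz split
$$\Bigl(\int |h^\top H_\nu|\,|w|\,d\nu^{t,x}\Bigr)^{\!2}\le\Bigl(\int (h^\top H_\nu)^2|w|\,d\nu^{t,x}\Bigr)\Bigl(\int |w|\,d\nu^{t,x}\Bigr),$$
which is a product of two factors with exponents $(2,1,1)$ and $(0,1,1)$. Each contribution satisfies $\sum_i\theta_{i,1}\theta_{i,3}=2$ and $\bigvee_i\theta_{i,2}\theta_{i,3}=1$, and $\Theta$ can be taken to consist of these two patterns (padding the remaining index triples by $(0,0,0)$ so that the trivial factors equal $1$). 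The proof is essentially algebraic, and the only delicate point is that the constraint $\bigvee_i\theta_{i,2}\theta_{i,3}=1$ forbids squaring $w$ inside any single integrand, which forces the specific Cauchy--Schwarz split above that keeps $|w|$ at the first power in every factor; this is what makes the subsequent analysis in Lemma \ref{lemma:boundonratios} tractable, since $w$ inherits only first-order smallness from the Hölder regularity of $a$.
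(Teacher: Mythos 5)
Your proof is correct, and the identity at its core checks out: writing $\delta:=m_p-m_\nu=\int H_\nu w\,d\nu^{t,x}$ and expanding $\int (h^\top H_\nu-h^\top\delta)^2(1+w)\,d\nu^{t,x}$, the cross terms do collapse exactly as you claim, giving $h^\top(\mathrm{Cov}_{p^{t,x}}-\mathrm{Cov}_{\nu^{t,x}})h=\int (h^\top H_\nu)^2 w\,d\nu^{t,x}-\bigl(\int h^\top H_\nu\, w\,d\nu^{t,x}\bigr)^2$, and your Cauchy--Schwarz split then yields admissible exponent patterns $(2,1,1)$ and $(2,1,1)\cdot(0,1,1)$. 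The paper follows the same overall strategy (change of measure $dp^{t,x}=(1+w)\,d\nu^{t,x}$, the mean-shift formula, Cauchy--Schwarz) but organizes it differently: it first splits the covariance difference into $\int (H_p^{\otimes2}-H_\nu^{\otimes2})(1+w)\,d\nu^{t,x}+\int H_\nu^{\otimes2}\,w\,d\nu^{t,x}$ and then expands the tensor difference $H_p^{\otimes2}-H_\nu^{\otimes2}$ term by term via the mean shift, ending with four terms, including a triple product $\bigl(\int|h^\top H_\nu||w|\,d\nu^{t,x}\bigr)^2\int|w|\,d\nu^{t,x}$. Your exact variance-reweighting identity compresses this to two terms and, as a bonus, produces a $\Theta$ that satisfies the stated constraint $\bigvee_i\theta_{i,2}\theta_{i,3}=1$ on the nose, whereas the paper's own terms such as $\bigl(\int\|H_\nu\|\,|w|\,d\nu^{t,x}\bigr)^2$ correspond to $\theta_{i,2}\theta_{i,3}=2$ and only satisfy the weaker condition $\sum_i\theta_{i,2}\theta_{i,3}\geq 1$ that is actually used downstream in the final estimate. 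Either version feeds correctly into Lemma~\ref{lemma:seconduglybound}, since in both cases $|w|$ appears to the first power inside every integral; your observation that this is forced by the need to extract only $\beta$-H\"older smallness from $a$ is exactly the right reason.
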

The proof of Proposition \ref{prop:firstuglybound} can be found in Section \ref{sec:prop:firstuglybound}. We obtain that the spectral norm of the difference between the covariances can be bounded by weighted moments of \( H_\nu \) and powers of the discrepancy term \( \left| \frac{e^{a(y)} Q_t q(x)}{Q_t r(x)} - 1 \right| \). Although the total degree of \( H_\nu \) in these terms is always equal to 2 (reflecting the second-order nature of the covariance), this alone is not sufficient to offset the singular scaling \( (1 - e^{-2t})^{-2} \) that appears in equation~\eqref{align:vfdihsksgs2}. However, we will show that additional concentration can be extracted from the discrepancy term \( \left| \frac{e^{a(y)} Q_t q(x)}{Q_t r(x)} - 1 \right| \), allowing us to effectively control the full expression.
Let us introduce the function
\begin{equation}\label{eq:lxyfirst}
    l(x,y) := K\left( \|H_\nu(y,x)\| + \|H_\nu(y,x)\|^\beta \right),
\end{equation}
which captures the Hölder-type regularity of \( a \) imposed by Assumption~\ref{assum:1}. Additionally, 
we introduce the function
\begin{equation}\label{eq:lxy}
    l_2(x,y) := \left\{\begin{array}{ll} l(x,y) & \text{under Assumption \ref{assum:2}-1 or \ref{assum:2}-2}\\ 2\|a\|_\infty & \text{under Assumption \ref{assum:2}-3},
    \end{array}\right.
\end{equation}
which satisfies
\[
\left| a\left( \int z \, d\nu^{t,x}(z) \right) - a(y) \right| \leq l_2(x,y).
\]
 The  regularity of \( a \) can now be leveraged to bound the discrepancy term appearing in the covariance difference.
\begin{lemma}\label{lemma:seconduglybound}
Let $\nu^{t,x}$ the probability measure  defined in \eqref{eq:nutx} and $l$,$l_2$ the function defined in \eqref{eq:lxyfirst}, \eqref{eq:lxy}. Then, for all $t>0$, $x\in \mathbb{R}^d$ and $j\in \{1,2\}$, we have
\begin{align*}
    &\frac{Q_tr(x)}{Q_tq(x)}\int  |h^\top H_\nu(y,x)|^{ j}|\frac{e^{a(y)}Q_tq(x)}{Q_tr(x)}-1|d\nu^{t,x}(y)\\
    &\leq C \left(\int  |h^\top H_\nu(y,x)|^{ 2j}d\nu^{t,x}(y)\right)^{1/2}\left(\int\left( e^{l_2(x,y)}l(x,y) e^{a(w)}\right)^{2}d\nu^{t,x}(w)\right)^{1/2}.
\end{align*}
\end{lemma}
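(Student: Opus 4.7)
The plan is to recognise the left-hand side as a centred variance-like quantity under $\nu^{t,x}$, apply the Cauchy--Schwarz inequality, and then control the resulting variance via the Hölder regularity of $a$ together with the elementary bound $|e^{u}-1|\le |u|e^{|u|}$. The opening observation is that, since $r = e^{a}q$, the normalising constants satisfy
\[
\frac{Q_{t}r(x)}{Q_{t}q(x)}\;=\;\int e^{a(y)}\,d\nu^{t,x}(y)\;=:\;m(x).
\]
Multiplying this factor inside the absolute value on the left-hand side collapses the expression to
\[
\frac{Q_{t}r(x)}{Q_{t}q(x)}\Bigl|\frac{e^{a(y)}Q_{t}q(x)}{Q_{t}r(x)}-1\Bigr|\;=\;|e^{a(y)}-m(x)|,
\]
so the quantity to bound becomes $\int |h^\top H_\nu(y,x)|^{j}\,|e^{a(y)}-m(x)|\,d\nu^{t,x}(y)$.

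Next I would apply Cauchy--Schwarz under $\nu^{t,x}$ to separate the two factors, producing
\[
\Bigl(\int |h^\top H_\nu(y,x)|^{2j}\,d\nu^{t,x}(y)\Bigr)^{1/2}\Bigl(\int (e^{a(y)}-m(x))^{2}\,d\nu^{t,x}(y)\Bigr)^{1/2}.
\]
The first factor is already of the desired form. Because $m(x)$ is the $\nu^{t,x}$-mean of $e^{a}$, the variance is dominated by the squared deviation from any fixed reference value; taking $e^{a(\bar y)}$ with $\bar y := \int z\,d\nu^{t,x}(z)$ yields
\[
\int (e^{a(y)}-m(x))^{2}\,d\nu^{t,x}(y)\;\le\;\int (e^{a(y)}-e^{a(\bar y)})^{2}\,d\nu^{t,x}(y).
\]
Applying $|e^{u}-1|\le|u|e^{|u|}$ with $u=a(\bar y)-a(y)$ and factoring out $e^{a(y)}$ gives the pointwise estimate $|e^{a(y)}-e^{a(\bar y)}|\le e^{a(y)}\,|a(y)-a(\bar y)|\,e^{|a(y)-a(\bar y)|}$; combined with the two bounds $|a(y)-a(\bar y)|\le C\,l(x,y)$ and $e^{|a(y)-a(\bar y)|}\le e^{l_2(x,y)}$ discussed in the next paragraph, absorbing constants into $C$ yields the claimed upper bound.

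The main obstacle is establishing the two pointwise inequalities that feed the final estimate, since Assumption~\ref{assum:1}--2 only provides the Hölder bound for $\|y-\bar y\|\le 1$. For $\|y-\bar y\|>1$ a chaining argument along the segment from $\bar y$ to $y$ upgrades the Hölder bound into a linear estimate $|a(y)-a(\bar y)|\le 2K\|y-\bar y\|$, which is dominated by $l(x,y)$ thanks to the linear term built into its definition~\eqref{eq:lxyfirst}. For the exponent one needs $|a(y)-a(\bar y)|\le l_2(x,y)$ uniformly: under Assumptions~\ref{assum:2}--1 and \ref{assum:2}--2 this is just the same chaining estimate applied with $l_2=l$, while under Assumption~\ref{assum:2}--3 the bound $\|a\|_\infty\le K$ directly gives $|a(y)-a(\bar y)|\le 2\|a\|_\infty=l_2(x,y)$, which is precisely why $l_2$ is defined differently in that case in~\eqref{eq:lxy}.
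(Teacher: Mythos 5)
Your proof is correct and reaches the stated bound by a slightly different, and arguably cleaner, route than the paper. The paper adds and subtracts $e^{a\left(\int z\,d\nu^{t,x}(z)\right)}Q_tq(x)$ inside $|e^{a(y)}Q_tq(x)/Q_tr(x)-1|$, producing two terms (one pointwise in $y$, one averaged in $w$), bounds each via the integral representation of $e^{a(y)}-e^{a(\bar y)}$, and then applies Cauchy--Schwarz and Jensen to the two terms separately. You instead observe that $Q_tr(x)/Q_tq(x)=\int e^{a}\,d\nu^{t,x}$ is exactly the $\nu^{t,x}$-mean $m(x)$ of $e^a$, so the left-hand side collapses to the single centred integral $\int |h^\top H_\nu(y,x)|^j\,|e^{a(y)}-m(x)|\,d\nu^{t,x}(y)$; one application of Cauchy--Schwarz plus the variance-minimization property $\int(f-\int f\,d\nu)^2\,d\nu\le\int(f-c)^2\,d\nu$ then replaces $m(x)$ by the reference value $e^{a(\bar y)}$ and lands on the same pointwise estimate the paper uses. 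This avoids the two-term bookkeeping entirely, at no loss. One small point to tighten: your chaining bound $|a(y)-a(\bar y)|\le 2K\|y-\bar y\|$ for $\|y-\bar y\|>1$ is not in fact dominated by $l(x,y)=K(\|H_\nu(y,x)\|+\|H_\nu(y,x)\|^\beta)$ when $\beta<1$ (only by $2\,l(x,y)$), which would leave you with $e^{2l_2(x,y)}$ in the exponential factor rather than the stated $e^{l_2(x,y)}$. The fix is to chain more carefully: with $n=\lceil\|y-\bar y\|\rceil$ steps of length at most one, each step contributes at most $K$, so $|a(y)-a(\bar y)|\le Kn\le K(\|y-\bar y\|+1)\le K(\|y-\bar y\|+\|y-\bar y\|^\beta)=l(x,y)$, using $\|y-\bar y\|^\beta\ge1$; this is exactly the inequality the paper asserts after defining $l_2$ in \eqref{eq:lxy}, and everything else in your argument goes through unchanged.
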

The proof of Lemma \ref{lemma:seconduglybound} can be found in Section \ref{sec:lemma:seconduglybound}. We see that the Hölder regularity of the function \( a \) allows us to extract an additional factor of \( \|H_\nu(y,x)\|^\beta \). This gain in exponent translates into stronger concentration, effectively compensating for the insufficient decay in the covariance terms alone. Let us now show that the term $e^{l_2(\cdot,x)}$ remains bounded under the integration of $\nu^{t,x}$.

\begin{lemma}\label{lemma:boundonratios}
For the probability measure $\mu^{t,x}$ being either equal to $p^{t,x}$ \eqref{eq:ptx} or $\nu^{t,x}$ \eqref{eq:nutx} and $l_2$ the function defined in \eqref{eq:lxy}, we have for all $R>0$ that
$$\int e^{Rl_2(x,y)} d\mu^{t,x}(y)\leq C_R.$$
\end{lemma}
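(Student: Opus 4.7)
The cornerstone of the argument is to exploit the strong log-concavity of $\nu^{t,x}$ to obtain sub-Gaussian control of $\|H_\nu(\cdot,x)\|$ under $\nu^{t,x}$, and then transfer this to $p^{t,x}$ through a change-of-measure argument whose key feature is that the (possibly unbounded) $x$-dependent contribution of $a$ cancels out. Assumption~\ref{assum:1}-1 combined with the Gaussian factor $\varphi^{t,x}$ yields
\[
\nabla^2\bigl(-\log \nu^{t,x}\bigr)(y) \;=\; \nabla^2 u(y) + \frac{e^{-2t}}{1-e^{-2t}}\,\text{Id} \;\succeq\; \alpha\,\text{Id},
\]
so $\nu^{t,x}$ is $\alpha$-strongly log-concave uniformly in $(t,x)$. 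The Bakry--Emery criterion then gives a log-Sobolev inequality with constant $1/\alpha$; applying Herbst's argument to the $1$-Lipschitz map $y \mapsto \|y - m_\nu\|$, with $m_\nu := \int z\,d\nu^{t,x}(z)$, together with the Brascamp--Lieb bound $\int \|H_\nu(y,x)\|^2\,d\nu^{t,x}(y) \leq d/\alpha$, delivers a uniform-in-$(t,x)$ sub-Gaussian estimate $\int e^{\lambda \|H_\nu(y,x)\|}\, d\nu^{t,x}(y) \leq C_\lambda$ for every $\lambda>0$.

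\textbf{Easy situations.} Under Assumption~\ref{assum:2}-3, $l_2 = 2\|a\|_\infty \leq 2K$ is a deterministic constant and the bound is immediate. Under Assumption~\ref{assum:2}-1, both $r$ and $q$ vanish outside $\mathrm{supp}(p)$, so $p^{t,x}$ and $\nu^{t,x}$ are supported on $\mathrm{supp}(p)$ and $m_\nu \in \mathrm{conv}(\mathrm{supp}(p))$; hence $\|H_\nu(y,x)\| \leq \mathrm{diam}(\mathrm{supp}(p))$ on that support and $l_2$ is bounded. Under Assumption~\ref{assum:2}-2 with $\mu^{t,x} = \nu^{t,x}$, I use $l_2(x,y) \leq K(1+2\|H_\nu(y,x)\|)$ (since $\|H_\nu\|^\beta \leq 1+\|H_\nu\|$) and conclude directly from the sub-Gaussian estimate.

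\textbf{Main case: Assumption~\ref{assum:2}-2 with $\mu^{t,x}=p^{t,x}$.} This is the principal obstacle, because $p^{t,x}$ is not log-concave and $a$ is generally unbounded. Iterating the local Hölder estimate of Assumption~\ref{assum:1}-2 along a polygonal path made of unit-length segments first promotes it into a global growth bound $|a(y)-a(z)| \leq C(\|y-z\|+1)$, and in particular $|a(y)-a(m_\nu)| \leq C_1\, l_2(x,y) + C_2$. Next, using $\frac{dp^{t,x}}{d\nu^{t,x}}(y) = e^{a(y)} Q_t q(x)/Q_t r(x)$, I split
\[
\int e^{R\,l_2(x,y)}\,dp^{t,x}(y) \;=\; \frac{Q_t q(x)}{Q_t r(x)} \int e^{R\,l_2(x,y)+a(y)}\,d\nu^{t,x}(y),
\]
and bound the two pieces using the decomposition $a(y) = a(m_\nu)+(a(y)-a(m_\nu))$. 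For the right-hand integral, $a(y) \leq a(m_\nu)+C_2+C_1\, l_2(x,y)$ together with the previous paragraph gives $\int e^{R\,l_2+a}\,d\nu^{t,x} \leq e^{a(m_\nu)+C_2}\int e^{(R+C_1)l_2}\,d\nu^{t,x} \leq e^{a(m_\nu)}\,C_R$. For the prefactor, Jensen, the reverse Hölder bound, and $\int l_2\,d\nu^{t,x} \leq \log\!\int e^{l_2}\,d\nu^{t,x} \leq C$ yield $Q_t r(x)/Q_t q(x) = \int e^{a(y)}\,d\nu^{t,x}(y) \geq e^{a(m_\nu) - C'}$. Multiplying, the $x$-dependent factor $e^{a(m_\nu)}$ cancels exactly, leaving the desired uniform bound $\int e^{R\,l_2}\,dp^{t,x} \leq C_R''$.
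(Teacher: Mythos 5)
Your proof is correct, but for the hard case (Assumption~\ref{assum:2}-2) it takes a genuinely different route from the paper's. The paper argues by a Laplace-type localization: it rewrites $p^{t,x}(y)\propto e^{f(y)+a(y)}$ with $f(y)=-u(y)-\|e^{-t}y-x\|^2/(2(1-e^{-2t}))$, takes $y^\star=\argmax f$, and uses the two-sided curvature bounds $\alpha\,\mathrm{Id}\preceq\nabla^2 u\preceq A\,\mathrm{Id}$ to sandwich $e^{f}$ between Gaussians centered at $y^\star$; the local H\"older bound on $a$ then lower-bounds the normalizing constant on a ball around $y^\star$, which yields $\int e^{R\|y-y^\star\|}dp^{t,x}(y)\leq C_R$ directly, and a final estimate $\|y^\star-\int z\,d\nu^{t,x}(z)\|\leq C$ transfers this to $H_\nu$. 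You instead stay entirely on the log-concave side: uniform $\alpha$-strong log-concavity of $\nu^{t,x}$ plus Bakry--Emery/Herbst gives the sub-Gaussian exponential moment of $\|H_\nu\|$ under $\nu^{t,x}$, and you pass to $p^{t,x}$ through the Radon--Nikodym derivative $e^{a(y)}Q_tq(x)/Q_tr(x)$, using the chained linear-growth bound on $a$ and Jensen's inequality so that the uncontrolled factor $e^{a(m_\nu)}$ cancels exactly between the integral and the prefactor. Your route is arguably cleaner and, notably, never invokes the upper bound $\nabla^2 u\preceq A\,\mathrm{Id}$, which the paper's Gaussian sandwich does need; the price is reliance on LSI/Herbst concentration where the paper uses only elementary Gaussian integrals. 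The easy cases (Assumptions~\ref{assum:2}-1 and~\ref{assum:2}-3) are dispatched the same way in both proofs.
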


The proof of Lemma \ref{lemma:boundonratios} can be found in Section \ref{sec:lemma:boundonratios}. Lemma~\ref{lemma:boundonratios} provides a uniform bound on the exponential moments of the deviation \( H_\mu(y,x) \), where \( \mu^{t,x} \) denotes either the original measure \( p^{t,x} \) or its log-concave approximation \( \nu^{t,x} \). Such control is essential for handling the exponential terms appearing in Lemma~\ref{lemma:seconduglybound}. By combining Lemmas~\ref{lemma:seconduglybound} and~\ref{lemma:boundonratios} with Proposition~\ref{prop:firstuglybound}, we arrive at an upper bound for the second term in equation~\eqref{align:vfdihsksgs2}.

\begin{proposition}\label{prop:lastuglybound} Let  $p^{t,x}$  and $\nu^{t,x}$ be the  probability measures defined in \eqref{eq:ptx} and \eqref{eq:nutx} respectively and $l$ the function defined in \eqref{eq:lxyfirst}. Then, there exists $\Theta\subset (\mathbb{N}_{\geq 0}^3)^3$ such  that for all $t>0$, $x\in \mathbb{R}^d$ and $h\in \mathbb{S}^{d-1}$ we have
\begin{align*}
    |h^\top &\left(\int  H_p(y,x)^{\otimes 2}dp^{t,x}(y)-\int H_\nu(y,x)^{\otimes 2}d\nu^{t,x}(y)\right)h|\\
    & \leq C \sum_{\theta\in \Theta}\prod_{i=1}^3\left(\int  |h^\top H_\nu(y,x)|^{2\theta_{i,1}}d\nu^{t,x}(y)\right)^{\theta_{i,3}/2}\left(\int  l(x,y)^{4\theta_{i,2}}d\nu^{t,x}(y)\right)^{\theta_{i,3}/4},
\end{align*}
with $\sum_{i=1}^3\theta_{i,1}\theta_{i,3}\geq 2 $ and $\bigvee_{i=1}^3\theta_{i,2}\theta_{i,3}= 1$.
\end{proposition}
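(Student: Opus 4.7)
The plan is to apply Lemma \ref{lemma:seconduglybound} factor by factor to the bound of Proposition \ref{prop:firstuglybound}, and then use Cauchy--Schwarz together with Lemma \ref{lemma:boundonratios} to separate pure moments of $l$ from the exponential terms. Fix $\theta \in \Theta$ and consider a single factor
\[
I_i := \int |h^\top H_\nu(y,x)|^{\theta_{i,1}}\Bigl|\tfrac{e^{a(y)}Q_tq(x)}{Q_tr(x)} - 1\Bigr|^{\theta_{i,2}} d\nu^{t,x}(y)
\]
from the product appearing in Proposition \ref{prop:firstuglybound}. The case $\theta_{i,2}=0$ is immediate: $I_i$ reduces to a pure moment of $|h^\top H_\nu|$, and Cauchy--Schwarz yields $I_i^{\theta_{i,3}} \leq \bigl(\int |h^\top H_\nu|^{2\theta_{i,1}} d\nu^{t,x}\bigr)^{\theta_{i,3}/2}$, which already matches the target form (the $l$-factor being trivially $1$).

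For factors with $\theta_{i,2}\geq 1$, the constraint $\bigvee_j \theta_{j,2}\theta_{j,3}=1$ forces $\theta_{i,2}=\theta_{i,3}=1$, and the sum constraint $\sum_j \theta_{j,1}\theta_{j,3}=2$ restricts the value of $\theta_{i,1}$ to $\{1,2\}$, which is precisely the range where Lemma \ref{lemma:seconduglybound} applies. This lemma yields
\[
I_i \leq C\,\frac{Q_tq(x)}{Q_tr(x)}\Bigl(\int |h^\top H_\nu|^{2\theta_{i,1}} d\nu^{t,x}\Bigr)^{1/2}\Bigl(\int (e^{l_2}\,l\,e^{a})^2 d\nu^{t,x}\Bigr)^{1/2},
\]
and a second Cauchy--Schwarz splits the last factor into $l$-moments and an exponential contribution:
\[
\int (e^{l_2}\,l\,e^{a})^2 d\nu^{t,x} \leq \Bigl(\int l^4 d\nu^{t,x}\Bigr)^{1/2}\Bigl(\int e^{4l_2+4a} d\nu^{t,x}\Bigr)^{1/2}.
\]
Raising to the power $\theta_{i,3}/2 = 1/2$ then delivers exactly the factor $\bigl(\int l^{4\theta_{i,2}} d\nu^{t,x}\bigr)^{\theta_{i,3}/4}$ demanded by the target estimate.

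The main technical point is the uniform control of the residuals $Q_tq(x)/Q_tr(x)$ and $\int e^{4l_2+4a} d\nu^{t,x}$. Writing $a(y) = a(\bar y) + (a(y) - a(\bar y))$ with $\bar y$ the mean of $\nu^{t,x}$, and using that $|a(y) - a(\bar y)| \leq l_2(x,y)$ by construction of $l_2$, the integrand $e^{4l_2+4a}$ is dominated by $e^{4a(\bar y)}e^{8l_2}$; Lemma \ref{lemma:boundonratios} then bounds $\int e^{8l_2} d\nu^{t,x}$ by a constant. The remaining factor $e^{4a(\bar y)}$ combines with the powers of $Q_tq(x)/Q_tr(x)$ produced by Lemma \ref{lemma:seconduglybound} through the identity $\int e^{a(y)} d\nu^{t,x}(y) = Q_tr(x)/Q_tq(x)$ together with a last application of Lemma \ref{lemma:boundonratios} (applied to both $p^{t,x}$ and $\nu^{t,x}$) to produce a uniform bound. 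Summing over $\theta \in \Theta$ and observing that these manipulations preserve $\bigvee_i \theta_{i,2}\theta_{i,3}=1$ while the sum $\sum_i \theta_{i,1}\theta_{i,3}$ can only grow (hence the relaxation from $=2$ to $\geq 2$) completes the argument.
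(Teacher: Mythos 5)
Your proposal is correct and follows essentially the same route as the paper: apply Lemma \ref{lemma:seconduglybound} factor by factor to the bound of Proposition \ref{prop:firstuglybound}, use Cauchy--Schwarz to separate the $l$-moments from the exponential terms, and control the residual $\tfrac{Q_tq(x)}{Q_tr(x)}\,e^{a(\bar y)}$ via the centering $|a(y)-a(\bar y)|\leq l_2(x,y)$ together with Lemma \ref{lemma:boundonratios} (the paper performs these same steps in a slightly different order, absorbing $e^{a(y)-a(\bar y)}\leq e^{l_2}$ before the Cauchy--Schwarz split). The only points glossed over — the factors with $\theta_{i,1}=0$, to which Lemma \ref{lemma:seconduglybound} as stated does not apply but which are handled by the same argument — are treated with the same level of informality in the paper itself.
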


The proof of Proposition \ref{prop:lastuglybound} can be found in Section \ref{sec:prop:lastuglybound}. Compared to Proposition~\ref{prop:firstuglybound}, Proposition~\ref{prop:lastuglybound} yields a sharper bound by gaining additional control through the factor \( \|H_\nu(y,x)\|^{\beta} \). More precisely, each term in the sum now includes a higher total amounts of moments of \( \|H_\nu(y,x)\| \), reflecting the regularizing effect induced by the Hölder continuity of the perturbation \( a \).
We now extend the Brascamp--Lieb inequality to obtain quantitative bounds on the moments \( \int \|H_\nu(y,x)\|^\gamma \, d\nu^{t,x}(y) \) for any \( \gamma > 0 \).

\begin{lemma}\label{lemma:extendedbl}
Let $\mu=e^{-W}$ a probability measure  such that $\nabla^2 W\succeq \theta \text{Id}$ for some $\theta>0$.  Then, for all $\gamma > 0$ and derivable function $f:\mathbb{R}^d\rightarrow \mathbb{R}$ satisfying $\|\nabla f\|_\infty^2\leq L$,  we have
$$\int |f(y)-\int f(z) d\mu(z)|^\gamma d\mu(y)\leq C_{L,\gamma} \theta^{-\gamma/2}.$$
\end{lemma}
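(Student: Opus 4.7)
The plan is to leverage the $\theta$-strong log-concavity of $\mu$ to obtain sub-Gaussian concentration of Lipschitz functions, and then convert this tail bound into the desired moment estimate via the layer-cake formula. This reduces the lemma to a standard computation, bypassing a direct estimation of each moment (which would be feasible for even integer $\gamma$ via iterated Brascamp--Lieb but awkward for fractional $\gamma$).

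Concretely, since $\nabla^2 W \succeq \theta \mathrm{Id}$, the measure $\mu = e^{-W}$ is $\theta$-strongly log-concave, so by the Bakry--\'Emery criterion it satisfies a logarithmic Sobolev inequality with constant $1/\theta$. Herbst's argument then yields Gaussian concentration for Lipschitz functions: because the hypothesis $\|\nabla f\|_\infty^2 \leq L$ makes $f$ an $\sqrt{L}$-Lipschitz function, we obtain
\begin{equation*}
\mu\!\left(\left|f(y) - \int f\, d\mu\right| \geq r\right) \leq 2 \exp\!\left(-\frac{\theta r^2}{2L}\right) \qquad \text{for all } r \geq 0.
\end{equation*}
Combining this bound with the layer-cake representation
\begin{equation*}
\int \left|f(y) - \int f\, d\mu(z)\right|^\gamma d\mu(y) = \gamma \int_0^\infty r^{\gamma-1}\, \mu\!\left(\left|f - \int f\, d\mu\right| \geq r\right) dr \leq 2\gamma \int_0^\infty r^{\gamma-1} e^{-\theta r^2/(2L)}\, dr
\end{equation*}
and performing the change of variables $s = \theta r^2/(2L)$ reduces the right-hand side to a Gamma integral of the form $\gamma\, 2^{\gamma/2}\, \Gamma(\gamma/2)\, (L/\theta)^{\gamma/2}$, which is exactly a bound of the form $C_{L,\gamma}\, \theta^{-\gamma/2}$ claimed in the statement.

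I do not anticipate a genuine obstacle here: every ingredient is a textbook consequence of strong log-concavity, and the extension to non-integer $\gamma$ is handled transparently by the tail integration. An alternative route avoiding Herbst's argument would be to first establish the result for even integers $\gamma = 2k$ by iterating the Brascamp--Lieb inequality (Proposition~\ref{prop:BL}) applied to $(f - \int f\, d\mu)^k$, obtaining the recursion $\mathbb{E}[(f-\mathbb{E}_\mu f)^{2k}] \leq \mathbb{E}[(f-\mathbb{E}_\mu f)^k]^2 + k^2 L \theta^{-1} \mathbb{E}[(f-\mathbb{E}_\mu f)^{2k-2}]$, and then interpolating between consecutive even integer moments via H\"older's inequality to reach all $\gamma > 0$. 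The concentration route above is shorter and more transparent, so it seems preferable.
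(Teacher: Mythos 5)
Your proof is correct, but it follows a genuinely different route from the paper. You invoke the Bakry--\'Emery criterion to get a logarithmic Sobolev inequality for the $\theta$-strongly log-concave measure $\mu$, apply Herbst's argument to obtain the sub-Gaussian tail bound $\mu(|f-\int f\,d\mu|\geq r)\leq 2e^{-\theta r^2/(2L)}$, and then integrate the tail; the computation with the Gamma function is accurate and yields an explicit constant $C_{L,\gamma}=\gamma\,2^{\gamma/2}\Gamma(\gamma/2)L^{\gamma/2}$. The paper instead stays entirely within the Brascamp--Lieb framework it has already set up (Proposition~\ref{prop:BL}): writing $g(\gamma)=\int|f-\int f\,d\mu|^\gamma d\mu$, it applies Brascamp--Lieb to the function $|f-\int f\,d\mu|^{\gamma/2}$ to derive the recursion $g(\gamma)\leq L^2\tfrac{\gamma^2}{4}\theta^{-1}g(\gamma-2)+g(\gamma/2)^2$, and then proves the bound $g(\gamma)\leq C_{L,\gamma}\theta^{-\gamma/2}$ by induction on the intervals $[0,\eta]$, $\eta\geq 2$, starting from the base case $\gamma\in[0,2]$ (Jensen plus Brascamp--Lieb). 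This is close in spirit to the ``alternative route'' you sketch at the end, except that the paper handles fractional $\gamma$ directly in the recursion rather than interpolating between even integers. Your approach buys sharper, fully explicit constants and a shorter argument, at the cost of importing the LSI/Herbst machinery; the paper's buys self-containedness, reusing only the single inequality it has already stated. Both are complete proofs of the claim.
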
 

The proof of Lemma  \ref{lemma:extendedbl} can be found in Section \ref{sec:lemma:extendedbl}. The dependence of the \( \gamma \)-moment on \( \theta^{-\gamma/2} \) in Lemma~\ref{lemma:extendedbl} reflects the Gaussian-type concentration induced by the strong convexity of the potential \( W \), analogous to the behavior of moments under a Gaussian measure with variance \( \theta^{-1} \).

\subsection{Final estimate on  $\lambda_{\max}(\nabla s(t,\cdot))$}
Let us now conclude the proof of Theorem \ref{theo:mainlipreguofscore}. Using Lemma \ref{lemma:extendedbl} we obtain for all $t>0$ and $x\in \mathbb{R}^d$ that
$$\int  |h^\top H_\nu(y,x)|^{2\theta_{i,1}}d\nu^{t,x}(y)\leq C \left(\frac{1-e^{-2t}}{\alpha(1-e^{-2t})+e^{-2t}}\right)^{\theta_{i,1}}$$
and using additionally Jensen's inequality, we get for $\zeta \in \{\beta,1\}$
$$\int \|H_\nu(y,x)\|^{4\zeta \theta_{i,2}}d\nu^{t,x}(y)\leq C \left(\frac{1-e^{-2t}}{\alpha(1-e^{-2t})+e^{-2t}}\right)^{2\zeta\theta_{i,2}}.$$

Now putting this bound in Proposition \ref{prop:lastuglybound}, we obtain for all $h\in \mathbb{S}^{d-1}$ that
\begin{align*}
    |h^\top &\left(\int  H_p(y,x)^{\otimes 2}dp^{t,x}(y)-\int H_\nu(y,x)^{\otimes 2}d\nu^{t,x}(y)\right)h|\nonumber\\
    &\leq C \left(\frac{1-e^{-2t}}{\alpha(1-e^{-2t})+e^{-2t}}\right)^{\frac{1}{2}\sum_{i=1}^3(\theta_{i,1}+\beta\theta_{i,2})\theta_{i,3}}.
\end{align*}
Since the exponents satisfy $\sum_{i=1}^3\theta_{i,1}\theta_{i,2}\geq 2 $ and $\sum_{i=1}^3\beta\theta_{i,2}\theta_{i,3}\geq \beta$, we finally deduce that
\begin{align}\label{eq:finalboundeigenvaluecov}
    |h^\top &\left(\int  H_p(y,x)^{\otimes 2}dp^{t,x}(y)-\int H_\nu(y,x)^{\otimes 2}d\nu^{t,x}(y)\right)h|\leq C \left(\frac{1-e^{-2t}}{\alpha(1-e^{-2t})+e^{-2t}}\right)^{1+\beta/2}.
\end{align}

Therefore, putting this bound together with \eqref{align:vfdihsksgs2}, we conclude
$$\lambda_{\max}\left(\nabla s(t,x)+\frac{\lambda}{\sigma^2}\text{Id} \right)\leq C \frac{e^{-2t}}{(1-e^{-2t})^{1-\beta/2}}.$$

\bibliography{bib}

\begin{thebibliography}{33}
\providecommand{\natexlab}[1]{#1}
\providecommand{\url}[1]{\texttt{#1}}
\expandafter\ifx\csname urlstyle\endcsname\relax
  \providecommand{\doi}[1]{doi: #1}\else
  \providecommand{\doi}{doi: \begingroup \urlstyle{rm}\Url}\fi

\bibitem[Anderson(1982)]{ANDERSON1982313}
Brian~D.O. Anderson.
\newblock Reverse-time diffusion equation models.
\newblock \emph{Stochastic Processes and their Applications}, 12\penalty0 (3):\penalty0 313--326, 1982.
\newblock ISSN 0304-4149.
\newblock \doi{https://doi.org/10.1016/0304-4149(82)90051-5}.
\newblock URL \url{https://www.sciencedirect.com/science/article/pii/0304414982900515}.

\bibitem[Benton et~al.(2023)Benton, Deligiannidis, and Doucet]{benton2023error}
Joe Benton, George Deligiannidis, and Arnaud Doucet.
\newblock Error bounds for flow matching methods.
\newblock \emph{arXiv preprint arXiv:2305.16860}, 2023.

\bibitem[Brascamp and Lieb(1976)]{BRASCAMP1976366}
Herm~Jan Brascamp and Elliott~H Lieb.
\newblock On extensions of the brunn-minkowski and prékopa-leindler theorems, including inequalities for log concave functions, and with an application to the diffusion equation.
\newblock \emph{Journal of Functional Analysis}, 22\penalty0 (4):\penalty0 366--389, 1976.
\newblock ISSN 0022-1236.

\bibitem[Brigati and Pedrotti(2024)]{brigati2024heat}
Giovanni Brigati and Francesco Pedrotti.
\newblock Heat flow, log-concavity, and lipschitz transport maps.
\newblock \emph{arXiv preprint arXiv:2404.15205}, 2024.

\bibitem[Chen et~al.(2023{\natexlab{a}})Chen, Lee, and Lu]{pmlr-v202-chen23q}
Hongrui Chen, Holden Lee, and Jianfeng Lu.
\newblock Improved analysis of score-based generative modeling: User-friendly bounds under minimal smoothness assumptions.
\newblock In \emph{Proceedings of the 40th International Conference on Machine Learning}, volume 202, pages 4735--4763. PMLR, 23--29 Jul 2023{\natexlab{a}}.

\bibitem[Chen et~al.(2022)Chen, Chewi, Li, Li, Salim, and Zhang]{chen2022sampling}
Sitan Chen, Sinho Chewi, Jerry Li, Yuanzhi Li, Adil Salim, and Anru~R Zhang.
\newblock Sampling is as easy as learning the score: theory for diffusion models with minimal data assumptions.
\newblock \emph{arXiv preprint arXiv:2209.11215}, 2022.

\bibitem[Chen et~al.(2023{\natexlab{b}})Chen, Chewi, Lee, Li, Lu, and Salim]{chen2023probability}
Sitan Chen, Sinho Chewi, Holden Lee, Yuanzhi Li, Jianfeng Lu, and Adil Salim.
\newblock The probability flow ode is provably fast.
\newblock \emph{Advances in Neural Information Processing Systems}, 36:\penalty0 68552--68575, 2023{\natexlab{b}}.

\bibitem[Conforti and Eichinger(2025)]{conforti2025coupling}
Giovanni Conforti and Katharina Eichinger.
\newblock A coupling approach to lipschitz transport maps.
\newblock \emph{arXiv preprint arXiv:2502.01353}, 2025.

\bibitem[De~Ryck et~al.(2021)De~Ryck, Lanthaler, and Mishra]{de2021approximation}
Tim De~Ryck, Samuel Lanthaler, and Siddhartha Mishra.
\newblock On the approximation of functions by tanh neural networks.
\newblock \emph{Neural Networks}, 143:\penalty0 732--750, 2021.

\bibitem[Dhariwal and Nichol(2021)]{dhariwal2021diffusion}
Prafulla Dhariwal and Alexander Nichol.
\newblock Diffusion models beat gans on image synthesis.
\newblock \emph{Advances in neural information processing systems}, 34:\penalty0 8780--8794, 2021.

\bibitem[Dragomir(2003)]{dragomir2003some}
Sever~Silvestru Dragomir.
\newblock Some gronwall type inequalities and applications.
\newblock \emph{Science Direct Working Paper}, \penalty0 (S1574-0358):\penalty0 04, 2003.

\bibitem[Fathi et~al.(2024)Fathi, Mikulincer, and Shenfeld]{fathi2024transportation}
Max Fathi, Dan Mikulincer, and Yair Shenfeld.
\newblock Transportation onto log-lipschitz perturbations.
\newblock \emph{Calculus of Variations and Partial Differential Equations}, 63\penalty0 (3):\penalty0 61, 2024.

\bibitem[Fukumizu et~al.(2024)Fukumizu, Suzuki, Isobe, Oko, and Koyama]{fukumizu2024flow}
Kenji Fukumizu, Taiji Suzuki, Noboru Isobe, Kazusato Oko, and Masanori Koyama.
\newblock Flow matching achieves almost minimax optimal convergence, 2024.
\newblock URL \url{https://arxiv.org/abs/2405.20879}.

\bibitem[Gentiloni-Silveri and Ocello(2025)]{gentiloni2025beyond}
Marta Gentiloni-Silveri and Antonio Ocello.
\newblock Beyond log-concavity and score regularity: Improved convergence bounds for score-based generative models in w2-distance.
\newblock \emph{arXiv preprint arXiv:2501.02298}, 2025.

\bibitem[Hoogeboom et~al.(2022)Hoogeboom, Satorras, Vignac, and Welling]{hoogeboom2022equivariant}
Emiel Hoogeboom, V{\i}ctor~Garcia Satorras, Cl{\'e}ment Vignac, and Max Welling.
\newblock Equivariant diffusion for molecule generation in 3d.
\newblock In \emph{International conference on machine learning}, pages 8867--8887. PMLR, 2022.

\bibitem[Huang et~al.(2024)Huang, Huang, and Lin]{huang2024convergence}
Daniel~Zhengyu Huang, Jiaoyang Huang, and Zhengjiang Lin.
\newblock Convergence analysis of probability flow ode for score-based generative models.
\newblock \emph{arXiv preprint arXiv:2404.09730}, 2024.

\bibitem[Karras et~al.(2022)Karras, Aittala, Aila, and Laine]{karras2022elucidating}
Tero Karras, Miika Aittala, Timo Aila, and Samuli Laine.
\newblock Elucidating the design space of diffusion-based generative models.
\newblock \emph{Advances in neural information processing systems}, 35:\penalty0 26565--26577, 2022.

\bibitem[Kim and Milman(2012)]{kim2012generalization}
Young-Heon Kim and Emanuel Milman.
\newblock A generalization of caffarelli’s contraction theorem via (reverse) heat flow.
\newblock \emph{Mathematische Annalen}, 354\penalty0 (3):\penalty0 827--862, 2012.

\bibitem[Kong et~al.(2020)Kong, Ping, Huang, Zhao, and Catanzaro]{kong2020diffwave}
Zhifeng Kong, Wei Ping, Jiaji Huang, Kexin Zhao, and Bryan Catanzaro.
\newblock Diffwave: A versatile diffusion model for audio synthesis.
\newblock \emph{arXiv preprint arXiv:2009.09761}, 2020.

\bibitem[Kwon et~al.(2022)Kwon, Fan, and Lee]{kwon2022score}
Dohyun Kwon, Ying Fan, and Kangwook Lee.
\newblock Score-based generative modeling secretly minimizes the wasserstein distance.
\newblock \emph{Advances in Neural Information Processing Systems}, 35:\penalty0 20205--20217, 2022.

\bibitem[Lee et~al.(2023)Lee, Lu, and Tan]{lee2023convergence}
Holden Lee, Jianfeng Lu, and Yixin Tan.
\newblock Convergence of score-based generative modeling for general data distributions.
\newblock In \emph{International Conference on Algorithmic Learning Theory}, pages 946--985. PMLR, 2023.

\bibitem[Lions and Seeger(2024)]{lions2024transport}
Pierre-Louis Lions and Benjamin Seeger.
\newblock Transport equations and flows with one-sided lipschitz velocity fields.
\newblock \emph{Archive for Rational Mechanics and Analysis}, 248\penalty0 (5):\penalty0 86, 2024.

\bibitem[L{\'o}pez-Rivera(2024)]{lopez2024bakry}
Pablo L{\'o}pez-Rivera.
\newblock A bakry-{\'e}mery approach to lipschitz transportation on manifolds.
\newblock \emph{Potential Analysis}, pages 1--23, 2024.

\bibitem[Mikulincer and Shenfeld(2023)]{mikulincer2023lipschitz}
Dan Mikulincer and Yair Shenfeld.
\newblock On the lipschitz properties of transportation along heat flows.
\newblock In \emph{Geometric Aspects of Functional Analysis: Israel Seminar (GAFA) 2020-2022}, pages 269--290. Springer, 2023.

\bibitem[Oko et~al.(2023)Oko, Akiyama, and Suzuki]{oko2023diffusion}
Kazusato Oko, Shunta Akiyama, and Taiji Suzuki.
\newblock Diffusion models are minimax optimal distribution estimators.
\newblock In \emph{International Conference on Machine Learning}, pages 26517--26582. PMLR, 2023.

\bibitem[Pedrotti et~al.(2023)Pedrotti, Maas, and Mondelli]{pedrotti2023improved}
Francesco Pedrotti, Jan Maas, and Marco Mondelli.
\newblock Improved convergence of score-based diffusion models via prediction-correction.
\newblock \emph{arXiv preprint arXiv:2305.14164}, 2023.

\bibitem[Schmidt-Hieber(2020)]{schmidt2020nonparametric}
Johannes Schmidt-Hieber.
\newblock Nonparametric regression using deep neural networks with relu activation function.
\newblock 2020.

\bibitem[Song et~al.(2020)Song, Sohl-Dickstein, Kingma, Kumar, Ermon, and Poole]{song2020score}
Yang Song, Jascha Sohl-Dickstein, Diederik~P Kingma, Abhishek Kumar, Stefano Ermon, and Ben Poole.
\newblock Score-based generative modeling through stochastic differential equations.
\newblock \emph{arXiv preprint arXiv:2011.13456}, 2020.

\bibitem[St{\'e}phanovitch(2024)]{stephanovitch2024smooth}
Arthur St{\'e}phanovitch.
\newblock Smooth transport map via diffusion process.
\newblock \emph{arXiv preprint arXiv:2411.10235}, 2024.

\bibitem[Suzuki(2018)]{adaptativity}
Taiji Suzuki.
\newblock Adaptivity of deep relu network for learning in besov and mixed smooth besov spaces: optimal rate and curse of dimensionality, 2018.

\bibitem[Tsybakov(2008)]{Tsybakov08}
Alexandre~B. Tsybakov.
\newblock \emph{Introduction to Nonparametric Estimation}.
\newblock Springer Publishing Company, Incorporated, 1st edition, 2008.
\newblock ISBN 0387790519.

\bibitem[Villani(2009)]{villani2009optimal}
C{\'e}dric Villani.
\newblock \emph{Optimal transport: old and new}, volume 338.
\newblock Springer, 2009.

\bibitem[Wibisono et~al.(2024)Wibisono, Wu, and Yang]{wibisono2024optimal}
Andre Wibisono, Yihong Wu, and Kaylee~Yingxi Yang.
\newblock Optimal score estimation via empirical bayes smoothing.
\newblock In \emph{The Thirty Seventh Annual Conference on Learning Theory}, pages 4958--4991. PMLR, 2024.

\end{thebibliography}

\appendix
\section{Details of the proofs of Theorem \ref{theo:mainlipreguofscore}}\label{sec:detailpt1}

\subsection{Proofs of the concentration bounds for log-concave measures}

\subsubsection{Proof of Proposition \ref{prop:values}}\label{sec:prop:values}
\begin{proof}
From equation \eqref{align:lawofxt} we have that
\begin{align*}
p_t(x)=&\int (2\pi(1-e^{-2t}))^{-d/2}\exp\left(-\frac{\|x-e^{-t}y\|^2}{2(1-e^{-2t})}\right)p(y)dy\\
    =& \gamma_d(x)Q_tr(x).
\end{align*}
Then, by differentiating under the integral, we obtain
\begin{align*}
    s(t,x)=&\nabla \log p_t(x)\\
    =& -x + \frac{\nabla Q_t r(x)}{Q_t r(x)}\\
    = & -x+\frac{e^{-t}}{1-e^{-2t}}\int (y-e^{-t}x)dp^{t,x}(y)
\end{align*}
and 
\begin{align*}
    \nabla s(t,x)= &-Id+  \frac{\nabla^2 Q_t r(x)}{Q_t r(x)}-\left(\frac{\nabla Q_t r(x)}{Q_t r(x)}\right)^{\otimes 2}\\
    = & -Id+\frac{e^{-2t}}{(1-e^{-2t})^2}\left(\int (y-e^{-t}x)^{\otimes 2}dp^{t,x}(y)-\left(\int (y-e^{-t}x)dp^{t,x}(y)\right)^{\otimes 2}\right)-\frac{e^{-2t}}{1-e^{-2t}}\text{Id}\\
    = &-\text{Id}+\frac{e^{-2t}}{(1-e^{-2t})^2}\int H_p(y,x)^{\otimes 2}dp^{t,x}(y)-\frac{e^{-2t}}{1-e^{-2t}}\text{Id}.
\end{align*}
\end{proof}

\subsubsection{Proof of Lemma \ref{lemma:boundeasy}}\label{sec:lemma:boundeasy}
\begin{proof}
As the probability density $\nu^{0,x}=\frac{q\gamma_d}{\int qd\gamma_d}$  is $\alpha$-log concave, we obtain that 
 $\nu^{t,x}$ is $(\alpha+\frac{e^{-2t}}{1-e^{-2t}})$-log-concave.
Indeed, for all $y\in \mathbb{R}^d$ we have
\begin{equation}\label{eq:ptxislogconc}
-\nabla^2\log(\nu^{t,x}(y))=-\nabla^2\log(q(y)\gamma_d(y))-\nabla^2\log(\varphi^{t,x}(y)/\gamma_d(y))\succeq (\alpha+\frac{e^{-2t}}{1-e^{-2t}}) \text{Id}.
\end{equation}
Then, using the Brascamp-Lieb inequality applied to functions of the form $x\mapsto \langle x,w\rangle $ with $w\in \mathbb{S}^{d-1}$, we obtain
\begin{align*}
    \lambda_{\max}\left(\int  H_\nu(y,x)^{\otimes2} d\nu^{t,x}(y)\right)\leq \lambda_{\max}\left((\alpha+\frac{e^{-2t}}{1-e^{-2t}})^{-1}\text{Id}\right),
\end{align*}
so
\begin{align*}  \frac{e^{-2t}}{(1-e^{-2t})^2}\lambda_{\max}\left(\int  H_\nu(y,x)^{\otimes2} d\nu^{t,x}(y)\right)-\frac{e^{-2t}}{1-e^{-2t}}&\leq \frac{e^{-2t}}{1-e^{-2t}} \frac{1-(\alpha(1-e^{-2t})+e^{-2t})}{\alpha(1-e^{-2t})+e^{-2t}}\\
& = \frac{e^{-2t}(1-\alpha)}{\alpha(1-e^{-2t})+e^{-2t}}.
\end{align*}
\end{proof}

\subsection{Proofs of covariance stability estimates}

\subsubsection{Proof of Proposition \ref{prop:firstuglybound}}\label{sec:prop:firstuglybound}

\begin{proof}

Let us first decompose the difference between the covariances in two different terms. We have
\begin{align}\label{align:jsdujzbsio}
    &\int  H_p(y,x)^{\otimes 2}dp^{t,x}(y)-\int H_\nu(y,x)^{\otimes 2}d\nu^{t,x}(y)\nonumber\\
    &=\int \left( H_p(y,x)^{\otimes 2}\frac{e^{a(y)}}{Q_tr(x)}- H_\nu(y,x)^{\otimes 2}\frac{1}{Q_tq(x)}\right) \exp(-u(\|y\|^2)+\|y\|^2/2)d\varphi^{t,x}(y)\nonumber\\
    &=\int \left( H_p(y,x)^{\otimes 2}- H_\nu(y,x)^{\otimes 2}\right)\frac{e^{a(y)}}{Q_tr(x)}\exp(-u(\|y\|^2)+\|y\|^2/2)d\varphi^{t,x}(y)\nonumber\\
    &+\int  H_\nu(y,x)^{\otimes 2}\left(\frac{e^{a(y)}}{Q_tr(x)}-\frac{1}{Q_tq(x)}\right)\exp(-u(\|y\|^2)+\|y\|^2/2)d\varphi^{t,x}(y)\nonumber\\
    &=\int \left( H_p(y,x)^{\otimes 2}- H_\nu(y,x)^{\otimes 2}\right)\frac{e^{a(y)}Q_tq(x)}{Q_tr(x)}d\nu^{t,x}(y)\nonumber\\
    &+\int  H_\nu(y,x)^{\otimes 2}\left(\frac{e^{a(y)}Q_tq(x)}{Q_tr(x)}-1\right)d\nu^{t,x}(y)\nonumber\\
        &=\int \left( H_p(y,x)^{\otimes 2}- H_\nu(y,x)^{\otimes 2}\right)\left(\frac{e^{a(y)}Q_tq(x)}{Q_tr(x)}-1+1\right)d\nu^{t,x}(y)\nonumber\\
    &+\int  H_\nu(y,x)^{\otimes 2}\left(\frac{e^{a(y)}Q_tq(x)}{Q_tr(x)}-1\right)d\nu^{t,x}(y).
\end{align}
Let us focus on the first term of \eqref{align:jsdujzbsio}. Fixing $x,y\in \mathbb{R}^d$, we have
\begin{align}\label{eq:bhjdshdiss}
    & H_p(y,x)^{\otimes 2}- H_\nu(y,x)^{\otimes 2}\nonumber\\
    = &-H_p(y,x)\otimes \int zdp^{t,x}(z)+\left(\int zd\nu^{t,x}(z)-\int zdp^{t,x}(z)\right)\otimes y+  H_\nu(y,x)\otimes \int zd\nu^{t,x}(z)\nonumber\\
    = &-H_p(y,x)\otimes \left(\int zdp^{t,x}(z)-\int zd\nu^{t,x}(z)\right) +\left(\int zd\nu^{t,x}(z)-\int zdp^{t,x}(z)\right)\otimes  H_\nu(y,x)\nonumber\\
    = &\left(-H_\nu(y,x)+\int zdp^{t,x}(z)-\int zd\nu^{t,x}(z)\right)\otimes \left(\int zdp^{t,x}(z)-\int zd\nu^{t,x}(z)\right)\nonumber\\
    &+\left(\int zd\nu^{t,x}(z)-\int zdp^{t,x}(z)\right)\otimes  H_\nu(y,x)
\end{align}
and
\begin{align}\label{eq:hfsisdosbdoz}
    \int zd\nu^{t,x}(z)-\int zdp^{t,x}(z)& = \int  H_\nu(z,x)d\nu^{t,x}(z)-\int  H_\nu(z,x)dp^{t,x}(z)\nonumber\\
    & = \int  H_\nu(z,x)\left(1-\frac{e^{a(z)}Q_tq(x)}{Q_tr(x)}\right)d\nu^{t,x}(z).
\end{align}

Therefore, taking $h\in \mathbb{S}^{d-1}$ and plugging \eqref{eq:hfsisdosbdoz} into \eqref{eq:bhjdshdiss}, we get
\begin{align*}
&|h^\top\int  (H_p(y,x)^{\otimes 2}- H_\nu(y,x)^{\otimes 2})d\nu^{t,x}(y)h|\\
&\leq \left(\int  h^\top H_\nu(y,x)\left(\frac{e^{a(y)}Q_tq(x)}{Q_tr(x)}-1\right)d\nu^{t,x}(y)\right)^2\nonumber\\
    & + 2|\int  h^\top H_\nu(y,x)d\nu^{t,x}(y)\int  h^\top H_\nu(y,x)\left(\frac{e^{a(y)}Q_tq(x)}{Q_tr(x)}-1\right)d\nu^{t,x}(y)|
\end{align*}
and
\begin{align*}
&|h^\top\int  (H_p(y,x)^{\otimes 2}- H_\nu(y,x)^{\otimes 2})|\frac{e^{a(y)}Q_tq(x)}{Q_tr(x)}-1|d\nu^{t,x}(y)h^\top|\\
&\leq \left(\int  h^\top H_\nu(y,x)\left(\frac{e^{a(y)}Q_tq(x)}{Q_tr(x)}-1\right)d\nu^{t,x}(y)\right)^2\int\left(\frac{e^{a(y)}Q_tq(x)}{Q_tr(x)}-1\right)d\nu^{t,x}(y)\\
    & + 2 \left(\int  h^\top H_\nu(y,x)\left(\frac{e^{a(y)}Q_tq(x)}{Q_tr(x)}-1\right)d\nu^{t,x}(y)\right)^2.
\end{align*}

Finally, plugging these estimates in  \eqref{align:jsdujzbsio}, we obtain
\begin{align}\label{align:brzdijnzide}
    |h^\top&\left(\int  H_p(y,x)^{\otimes 2}dp^{t,x}(y)-\int H_\nu(y,x)^{\otimes 2}d\nu^{t,x}(y)\right)h|\nonumber\\
     \leq &|h^\top\int ( H_p(y,x)^{\otimes 2}- H_\nu(y,x)^{\otimes 2})d\nu^{t,x}(y)h|\nonumber\\
     &+|h^\top\int ( H_p(y,x)^{\otimes 2}- H_\nu(y,x)^{\otimes 2})\left(\frac{e^{a(y)}Q_tq(x)}{Q_tr(x)}-1\right)d\nu^{t,x}(y)h|\nonumber\\
    &+|h^\top\int  H_\nu(y,x)^{\otimes 2}|\frac{e^{a(y)}Q_tq(x)}{Q_tr(x)}-1|d\nu^{t,x}(y)h|\nonumber\\
     \leq &3\left(\int  \|H_\nu(y,x)\|\left(\frac{e^{a(y)}Q_tq(x)}{Q_tr(x)}-1\right)d\nu^{t,x}(y)\right)^2\nonumber\\
    & + 3\int  |h^\top H_\nu(y,x)|d\nu^{t,x}(y)\int  |h^\top H_\nu(y,x)||\frac{e^{a(y)}Q_tq(x)}{Q_tr(x)}-1|d\nu^{t,x}(y)\nonumber\\
    &+ \left(\int  |h^\top H_\nu(y,x)||\frac{e^{a(y)}Q_tq(x)}{Q_tr(x)}-1|d\nu^{t,x}(y)\right)^2\int|\frac{e^{a(y)}Q_tq(x)}{Q_tr(x)}-1|d\nu^{t,x}(y)\nonumber\\
    &+\int  |h^\top H_\nu(y,x)|^{ 2}|\frac{e^{a(y)}Q_tq(x)}{Q_tr(x)}-1|d\nu^{t,x}(y).
\end{align}
We can then conclude that there exists $\Theta\subset (\mathbb{N}_{\geq 0}^3)^3$ such  that
\begin{align*}
    |h^\top &\left(\int  H_p(y,x)^{\otimes 2}dp^{t,x}(y)-\int H_\nu(y,x)^{\otimes 2}d\nu^{t,x}(y)\right)h|\\
    &\leq C \sum_{\theta\in \Theta}\prod_{i=1}^3\left(\int  |h^\top  H_\nu(y,x)|^{\theta_{i,1}}|\frac{e^{a(y)}Q_tq(x)}{Q_tr(x)}-1|^{\theta_{i,2}}d\nu^{t,x}(y)\right)^{\theta_{i,3}},
\end{align*}
with $\sum_{i=1}^3\theta_{i,1}\theta_{i,3}=2$ and $\bigvee_{i=1}^3\theta_{i,2}\theta_{i,3}= 1$.

\end{proof}

\subsubsection{Proof of Lemma \ref{lemma:seconduglybound}}\label{sec:lemma:seconduglybound}

\begin{proof} We have
\begin{align*}
&|\frac{e^{a(y)}Q_tq(x)}{Q_tr(x)}-1|\\
&=\frac{1}{Q_tr(x)}|e^{a(y)}Q_tq(x)-e^{a\left(\int zd\nu^{t,x}(z)\right)}Q_tq(x)+e^{a\left(\int zd\nu^{t,x}(z)\right)}Q_tq(x)-Q_tr(x)|\\
&\leq |e^{a(y)}-e^{a\left(\int zd\nu^{t,x}(z)\right)}|\frac{Q_tq(x)}{Q_tr(x)}+\frac{1}{Q_tr(x)}|\int\left( e^{a\left(\int zd\nu^{t,x}(z)\right)}-e^{a(w)}\right)q(w)d\varphi^{t,x}(w)|.
\end{align*}
Then, as 
\begin{align*}
    |e^{a(y)}-e^{a\left(\int zd\nu^{t,x}(z)\right)}| = &\int_0^1 e^{a(y)+s(a\left(\int zd\nu^{t,x}(z)\right)-a(y))}|a\left(\int zd\nu^{t,x}(z)\right)-a(y)|ds\\
    \leq & C e^{a(y)+l_2(x,y)}l(x,y),
\end{align*}
we deduce that 
\begin{align*}
    &|\frac{e^{a(y)}Q_tq(x)}{Q_tr(x)}-1|\\
    & \leq C \frac{Q_tq(x)}{Q_tr(x)}\left( e^{a(y)+l_2(x,y)}l(x,y) +\int e^{l_2(x,w)}l(x,w) e^{a(w)}d\nu^{t,x}(w)\right).
\end{align*}

Then, for $j\in \{1,2\}$, we have
\begin{align*}
    &\int  |h^\top H_\nu(y,x)|^{ j}|\frac{e^{a(y)}Q_tq(x)}{Q_tr(x)}-1|d\nu^{t,x}(y)\\
    \leq & \int  |h^\top H_\nu(y,x)|^{ j}l(x,w) e^{l_2(x,y)}e^{a(y)}d\nu^{t,x}(y)\frac{Q_tq(x)}{Q_tr(x)}\\
    &+\int  |h^\top H_\nu(y,x)|^{ j}d\nu^{t,x}(y)\int e^{l_2(x,w)}l(x,w) e^{a(w)}d\nu^{t,x}(w)\frac{Q_tq(x)}{Q_tr(x)},
\end{align*}
so applying Cauchy-Schwarz inequality we get the result.

\end{proof}

\subsubsection{Proof of Lemma \ref{lemma:boundonratios}}\label{sec:lemma:boundonratios}

\begin{proof} We show the result for $\mu^{t,x}=p^{t,x}$, the case $\mu^{t,x}=\nu^{t,x}$ follows the same proof.
First, under Assumption \ref{assum:2}-3 the result is immediate as $\|a\|_\infty\leq C$. Likewise, under Assumption \ref{assum:2}-1, the result is immediate as $p^{t,x}$ share the same support than $p$. Now, under Assumption \ref{assum:2}-2, we have
\begin{align*}
    (2\pi(1-e^{-2t})^d)^{1/2}\varphi^{t,x}(y)=&\exp \left(-\frac{1}{2(1-e^{-2t})}\left(\|y\|^2-2\langle y,e^{-t}x\rangle +e^{-2t}\|x\|^2 \right)\right)\\
     =&\exp \left(-\frac{1}{2(1-e^{-2t})}\left(e^{-2t}\|y\|^2-2\langle e^{-t}y,x\rangle +\|x\|^2 + (1-e^{-2t})(\|y\|^2-\|x\|^2) \right)\right)\\
     = & \exp \left(-\frac{\|e^{-t}y-x\|^2}{2(1-e^{-2t})}\right) \gamma_d(y)\gamma_d(x)^{-1},
\end{align*}
so we deduce that
\begin{align*}
    \int e^{Rl(x,y)}dp^{t,x}(y) = \int e^{Rl(x,y)}\frac{e^{-u(y)+a(y)}e^{-\frac{\|e^{-t}y-x\|^2}{2(1-e^{-2t})}}dy}{\int e^{-u(z)+a(z)}e^{-\frac{\|e^{-t}z-x\|^2}{2(1-e^{-2t})}}dz}.
\end{align*}

Define $$f(y)=-u(y)-\frac{\|e^{-t}y-x\|^2}{2(1-e^{-2t})}$$ and 
$$
y^\star\in \argmax_{z\in \mathbb{R}^d} f(z).
$$ 
We have

\begin{align*}
f(y)=&f(y)-f(y^\star)+f(y^\star)\\
&=(y-y^\star)^{T}\int_0^{1}(1-s)\nabla^2 f(y^\star+s(y-y^\star))ds(y-y^\star)+f(y^\star)
\\
&=(y-y^\star)^{T}\int_0^{1}(1-s)(-\nabla^2 u(y^\star+s(y-y^\star))-\frac{e^{-2t}}{1-e^{-2t}}\text{Id})ds(y-y^\star)+f(y^\star)
\end{align*}
so using the assumptions on $\nabla^2 u$, we obtain
\begin{align*}
   -\frac{1}{2}(A+\frac{e^{-2t}}{1-e^{-2t}})\|y-y^\star\|^2+f(y^\star) \leq f(y)\leq -\frac{1}{2}(\alpha+\frac{e^{-2t}}{1-e^{-2t}})\|y-y^\star\|^2+f(y^\star).
\end{align*}
In particular, for all $h\in (0,1)$ we get
\begin{align*}
    \int e^{-u(z)+a(z)}e^{-\frac{\|e^{-t}z-x\|^2}{2(1-e^{-2t})}}dz&\geq\int e^{-u(z)+a(z)}e^{-\frac{\|e^{-t}z-x\|^2}{2(1-e^{-2t})}}\mathds{1}_{\|z-y^\star\|\leq h}dz\\
    &\geq C^{-1} h^de^{f(y^\star)+a(y^\star)-Ch^\beta}e^{-\frac{1}{2}(A+\frac{e^{-2t}}{1-e^{-2t}})h^2}.
\end{align*}
Then, as \begin{align*}
    \int e^{R\|y^\star-y\|}dp^{t,x}(y)=\int e^{R\|y^\star-y\|}\frac{e^{-u(y)+a(y)}e^{-\frac{\|e^{-t}y-x\|^2}{2(1-e^{-2t})}}dy}{\int e^{-u(z)+a(z)}e^{-\frac{\|e^{-t}z-x\|^2}{2(1-e^{-2t})}}dz},
\end{align*}
we deduce that for $k>0$,
\begin{align*}
    \int e^{R\|y^\star-y\|}dp^{t,x}(y)&\leq e^{Rk}+\int e^{R\|y^\star-y\|}\mathds{1}_{\|y-y^\star\|\geq k}dp^{t,x}(y)\\
    &\leq e^{Rk}+C h^{-d}e^{h^\beta} e^{\frac{1}{2}(A+\frac{e^{-2t}}{1-e^{-2t}})h^2}\int e^{R\|y^\star-y\|}e^{-\frac{1}{2}(\alpha+\frac{e^{-2t}}{1-e^{-2t}})\|y-y^\star\|^2}\mathds{1}_{\|y-y^\star\|\geq k}dy\\
    &\leq e^{Rk}+1,
\end{align*}
taking $h=1$ and $k$ large enough depending only on $\alpha,A$ and $\beta$. Now, similarly we obtain
\begin{align}\label{eq:fhskkssyzvaidod}
    \|y^\star-\int y d\nu^{t,x}(y)\|&\leq k+  \int \|y^\star-y\|\mathds{1}_{\|y-y^\star\|\geq k} d\nu^{t,x}(y)\nonumber\\
    &\leq k+1
\end{align}
for $k$ large enough depending only on $\alpha,A$ and $\beta$, so we deduce that
$$l(x,y^\star)\leq C.$$
We can then conclude that
\begin{align*}
    \int e^{Rl(x,y)} d\mu^{t,x}(y) &= \int e^{RK\left( \|H_\nu(y,x)\| + \|H_\nu(y,x)\|^\beta \right)} d\mu^{t,x}(y)\\
    &\leq e^{Rl(x,y^\star)} \int  e^{ RK(\|y-y^\star\|^\beta+\|y-y^\star\|)} d\mu^{t,x}(y)
    \\
    & \leq C_R.
\end{align*}

\end{proof}

\subsubsection{Proof of Proposition \ref{prop:lastuglybound}}\label{sec:prop:lastuglybound}
\begin{proof}
Let us start by bounding the ratio $\frac{Q_tq(x)}{Q_tr(x)}$. We have
\begin{align*}
e^{a(\int z dp^{t,x}(z))}    \frac{Q_tq(x)}{Q_tr(x)}=& \frac{1}{Q_tr(x)}\int e^{a(\int z dp^{t,x}(z))-a(y)}r(y)\varphi^{t,x}(y)dy\\
\leq & \int e^{Cl_2(x,y)}dp^{t,x}(y).
\end{align*}
Then, from Lemma \ref{lemma:boundonratios} we deduce that
\begin{align*}
  \frac{Q_tq(x)}{Q_tr(x)}=& e^{-a(\int z dp^{t,x}(z))} \int e^{Cl_2(x,y)}dp^{t,x}(y)\\
        \leq&  C e^{-a(\int z dp^{t,x}(z))}\\
        \leq&  C e^{-a(\int z d\nu^{t,x}(z))}.
\end{align*}

Therefore,  we obtain that
\begin{align*}
   & \left(\int\left( e^{l_2(x,y)}l(x,y) e^{a(y)}\right)^{2}d\nu^{t,x}(y)\right)^{1/2}\frac{Q_tq(x)}{Q_tr(x)}\\
   &\leq C\left(\int\left( e^{l_2(x,y)}l(x,y) e^{a(y)-a(\int z d\nu^{t,x}(z))}\right)^{2}d\nu^{t,x}(y)\right)^{1/2}\\
      &\leq C \left(\int\left( e^{2l_2(x,y)}l(x,y) \right)^{2}d\nu^{t,x}(y)\right)^{1/2}
      \end{align*}
and applying Cauchy-Schwarz inequality we obtain
\begin{align*}
\int\left( e^{2l_2(x,y)}l(x,y) \right)^{2}d\nu^{t,x}(y)
    & \leq  \left(\int   l(x,y)^4d\nu^{t,x}(y)\right)^{1/2}\left(\int  e^{8 l_2(x,y)}d\nu^{t,x}(y)\right)^{1/2}\\
    & \leq C \left(\int   l(x,y)^4d\nu^{t,x}(y)\right)^{1/2}.
\end{align*}
Then, using Lemma \ref{lemma:seconduglybound} we get
\begin{align*}
    &\int  |h^\top H_\nu(y,x)|^j|\frac{e^{a(y)}Q_tq(x)}{Q_tr(x)}-1|d\nu^{t,x}(y)\\
    &\leq C \left(\int  |h^\top H_\nu(y,x)|^{ 2j}d\nu^{t,x}(y)\right)^{1/2}\left(\int\left( e^{l_2(x,y)}l(x,y) e^{a(w)}\right)^{2}d\nu^{t,x}(w)\right)^{1/2}\frac{Q_tq(x)}{Q_tr(x)}\\
    &\leq C\left(\int  |h^\top H_\nu(y,x)|^{ 2j}d\nu^{t,x}(y)\right)^{1/2}\left(\int l(x,y)^4 d\nu^{t,x}(y)\right)^{1/4}.
\end{align*}
We finally deduce from Proposition \ref{prop:firstuglybound} that
\begin{align*}
    |h^\top &\left(\int  H_p(y,x)^{\otimes 2}dp^{t,x}(y)-\int H_\nu(y,x)^{\otimes 2}d\nu^{t,x}(y)\right)h|\\
    &\leq C \sum_{\theta\in \Theta}\prod_{i=1}^3\left(\int  |h^\top  H_\nu(y,x)|^{\theta_{i,1}}|\frac{e^{a(y)}Q_tq(x)}{Q_tr(x)}-1|^{\theta_{i,2}}d\nu^{t,x}(y)\right)^{\theta_{i,3}}\\
    & \leq C \sum_{\theta\in \Theta}\prod_{i=1}^3\left(\int  |h^\top H_\nu(y,x)|^{2\theta_{i,1}}d\nu^{t,x}(y)\right)^{\theta_{i,3}/2}\left(\int  l(x,y)^{4\theta_{i,2}}d\nu^{t,x}(y)\right)^{\theta_{i,3}/4},
\end{align*}
with $\sum_{i=1}^3\theta_{i,1}\theta_{i,3}\geq 2 $ and  $\bigvee_{i=1}^3\theta_{i,2}\theta_{i,3}= 1$.

\end{proof}

\subsubsection{Proof of Lemma \ref{lemma:extendedbl}}\label{sec:lemma:extendedbl}

\begin{proof}
If $\gamma \in [0,2]$, the result is immediate from Jensen's and Brascamp-Lieb inequalities. Suppose now that $\gamma>2$.
Let $$g(\gamma)=\int |f(y)-\int f(z) d\mu(z)|^\gamma d\mu(y),$$
we have
\begin{align*}
    g(\gamma) =& g(\gamma)-g(\gamma/2)^2+g(\gamma/2)^2\\
    = & \int \left(|f(y)-\int f(z) d\mu(z)|^{\gamma/2}-\int  |f(w)-\int f(z) d\mu(z)|^{\gamma/2}d\mu(w)\right)^2d\mu(y)+g(\gamma/2)^2.
\end{align*}
Now applying the Brascamp-Lieb inequality, we have 
\begin{align*}
    &\int \left(|f(y)-\int f(z) d\mu(z)|^{\gamma/2}-\int  |f(w)-\int f(z) d\mu(z)|^{\gamma/2}d\mu(w)\right)^2d\mu(y)\\
    & \leq \theta^{-1}\left(L \frac{\gamma}{2}\right)^2\int |f(y)-\int f(z) d\mu(z)|^{\gamma-2} d\mu(y),
\end{align*}
so
\begin{align*}
    g(\gamma)\leq L^2 \frac{\gamma^2}{4}\theta^{-1}g(\gamma-2)+g(\gamma/2)^2.
\end{align*}
Using this inequality, let us now show that
$$g(\gamma)\leq  C_{L,\gamma} \theta^{-\gamma/2}.$$
We know that this property is true for $\gamma \in [0,2]$. Suppose now that it is true for all $\gamma \in [0,\eta]$ with $\eta\geq 2$. Then for all $r\in [0,1],$ we have
\begin{align*}
    g(r+\eta)\leq & L^2 \frac{(r+\eta)^2}{4}\theta^{-1}g(r+\eta-2)+g((r+\eta)/2)^2\\
    \leq & L^2 \frac{(r+\eta)^2}{4}\theta^{-1}C_{L,r+\eta-2} \theta^{-(r+\eta-2)/2}+(C_{L,(r+\eta)/2} \theta^{-((r+\eta)/2)/2})^2\\
    \leq & C_{L,r+\eta}\theta^{-(r+\eta)/2},
\end{align*}
so by induction the property is true for all $\gamma\geq 0$.
\end{proof}

\subsection{Additional proofs of Lipschitz estimates}

\subsubsection{Proof of Proposition \ref{prop:SDEstability-drift}}\label{sec:prop:SDEstability-drift}
\begin{proof}
Let us take $X_0\in \mathbb{R}^d$ and $X_t$,$\bar{X}_t$ defined by
\begin{align*}
    X_t & = X_0 + \int_0^t a(s,X_s) ds + \int_0^t b_s dB_s \\
    \bar{X}_t & = X_0 + \int_0^t \bar{a}(s,\bar{X}_s) ds + \int_0^t b_s dB_s,
\end{align*}
for the same $B_t$ so that $X_t$ and $\bar{X}_t$ are solutions to the SDE's. In particular this defines a coupling for $(p_t,\bar{p}_t)$ so we deduce that $$W_2(p_t,\bar{p}_t)^2 \leq \mathbb{E}(\|X_t - \bar{X}_t\|^2).$$ Denoting $g(s)=\mathbb{E}(\|X_s - \bar{X}_s\|^2)$ and $L_s=\sup_{x \in \mathbb{R}^d} \lambda_{\max}(\nabla a_s(x))$, it holds that
\begin{align*}
    g'(s) & = \mathbb{E} \left ( 2 \left\langle \bar{a}(s,\bar{X}_s) - a(s,X_s),\bar{X}_s - X_s \right\rangle \right ) \\
    & = 2 \mathbb{E} \left ( \left\langle \bar{a}(s,\bar{X}_s) - a(s,\bar{X}_s),\bar{X}_s - X_s \right\rangle \right ) + 2 \mathbb{E} \left ( \left\langle a(s,\bar{X}_s) - a(s,X_s),\bar{X}_s - X_s \right\rangle \right ) \\
    & \leq 2 \mathbb{E} \left ( \left\langle \bar{a}(s,\bar{X}_s) - a(s,\bar{X}_s),\bar{X}_s - X_s \right\rangle \right ) + 2L_s g(s) \\
    & \leq  2\mathbb{E} \left ( \left \|\bar{a}(s,\bar{X}_s) - a(s,\bar{X}_s) \right \|^2 \right )^{1/2}g(s)^{1/2} + 2 L_sg(s).
\end{align*}
We deduce from the non linear Gronwall's inequality (Theorem 21 in \cite{dragomir2003some}) that
\begin{align*}
    g(t)^{1/2} \leq &  \int_0^t \exp \left ( \int_\tau^t L_s ds \right )\mathbb{E} \left ( \left \|\bar{a}(\tau,\bar{X}_\tau) - a(\tau,\bar{X}_\tau) \right \|^2 \right )^{1/2}d\tau\\
    \leq &  \left(t\int_0^t \exp \left ( 2\int_\tau^t L_s ds \right )\mathbb{E} \left ( \left \|\bar{a}(\tau,\bar{X}_\tau) - a(\tau,\bar{X}_\tau) \right \|^2 \right )d\tau\right)^{1/2}.
\end{align*}
Therefore, we obtain that
\begin{align*}
    W_2^2(p_t,\bar{p}_t)\leq t\int_0^t \exp \left ( 2\int_\tau^t L_s ds \right )\int \left \|\bar{a}(\tau,x) - a(\tau,x) \right \|^2 d\bar{p}_\tau(x)d\tau.
\end{align*}
\end{proof}

\subsubsection{Proof of Proposition \ref{prop:unboundjaccompact}}\label{sec:prop:unboundjaccompact}
\begin{proof}
From Proposition~\ref{prop:values}, we have the identity
\begin{align*}
s(t,x) - s(t,y) = y - x + \frac{e^{-t}}{1 - e^{-2t}} \left( e^{-t}(y - x) + \int z \left( p^{t,x}(z) - p^{t,y}(z) \right) \, dz \right),
\end{align*}
which yields
\begin{align*}
\langle s(t,x) - s(t,y), y - x \rangle 
&= \left( 1 + \frac{e^{-2t}}{1 - e^{-2t}} \right) \|x - y\|^2 +\frac{e^{-t}}{1 - e^{-2t}} \left\langle \int z \left( p^{t,x}(z) - p^{t,y}(z) \right) \, dz, y - x \right\rangle \\
&\geq \left( 1 + \frac{e^{-2t}}{1 - e^{-2t}} \right) \|x - y\|^2 - C\frac{e^{-t}}{1 - e^{-2t}} \|x - y\|,
\end{align*}
since \( p^{t,x} \) and \( p^{t,y} \) have the same support than \( p \). Taking \( x \) and \( y \) sufficiently far apart, we deduce that
\[
\|\nabla s(t, \cdot)\|_\infty \geq 1 + \frac{e^{-2t}}{1 - e^{-2t}} \geq 1 + C e^{-2t}(1 + t^{-1}).
\]
\end{proof}

\subsubsection{Proof of Corollary \ref{coro:lipschitzbound}}\label{sec:coro:lipschitzbound}

\begin{proof}
The bound on $\lambda_{\max}(\nabla s(t,x))$ is given by Theorem \ref{theo:mainlipreguofscore}. The bound on $\lambda_{\min}(\nabla s(t,x))$ follows the same ideas but uses the Cramer-Rao bound. Specifically, similarly to \eqref{align:vfdihsksgs2} we have
\begin{align}\label{align:vfdihsksgs2min}
\lambda_{\min}\left( \nabla s(t,x) + \mathrm{Id} \right)
&= \lambda_{\min}\left( \frac{e^{-2t}}{(1 - e^{-2t})^2} \int H_p(y,x)^{\otimes 2} \, dp^{t,x}(y) - \frac{e^{-2t}}{1 - e^{-2t}} \mathrm{Id} \right) \nonumber \\
&\geq \frac{e^{-2t}}{(1 - e^{-2t})^2} \lambda_{\min}\left( \int H_p(y,x)^{\otimes 2} \, dp^{t,x}(y) - \int H_\nu(y,x)^{\otimes 2} \, d\nu^{t,x}(y) \right) \nonumber \\
&\quad + \frac{e^{-2t}}{(1 - e^{-2t})^2} \lambda_{\min}\left( \int H_\nu(y,x)^{\otimes 2} \, d\nu^{t,x}(y) \right) - \frac{e^{-2t}}{1 - e^{-2t}}.
\end{align}
Now, from \eqref{eq:finalboundeigenvaluecov} we have
\begin{align*}
    &\frac{e^{-2t}}{(1 - e^{-2t})^2} \lambda_{\min}\left( \int H_p(y,x)^{\otimes 2} \, dp^{t,x}(y) - \int H_\nu(y,x)^{\otimes 2} \, d\nu^{t,x}(y) \right)\\
    &\geq -C\frac{e^{-2t}}{(1 - e^{-2t})^2}\left(\frac{1-e^{-2t}}{\alpha(1-e^{-2t})+e^{-2t}}\right)^{1+\beta/2}\\
    &\geq -C\frac{e^{-2t}}{(1 - e^{-2t})^{1-\beta/2}}.
\end{align*}
On the other hand, using Cramer-Rao bound we have
$$\lambda_{\min}\left(  \int H_\nu(y,x)^{\otimes 2} \, d\nu^{t,x}(y) \right)\geq \frac{1-e^{-2t}}{A(1-e^{-2t})+e^{-2t}},$$
so we deduce like for Lemma \ref{lemma:boundeasy} that
\begin{align*}
    \frac{e^{-2t}}{(1 - e^{-2t})^2} \lambda_{\min}\left( \int H_\nu(y,x)^{\otimes 2} \, d\nu^{t,x}(y) \right) - \frac{e^{-2t}}{1 - e^{-2t}}\geq \frac{e^{-2t}(1-A)}{A(1-e^{-2t})+e^{-2t}}.
\end{align*}
Therefore, we finally have
\begin{align*}
    \lambda_{\min}\left( \nabla s(t,x) + \mathrm{Id} \right)&\geq \frac{e^{-2t}(1-A)}{A(1-e^{-2t})+e^{-2t}}-C\frac{e^{-2t}}{(1 - e^{-2t})^{1-\beta/2}}\\
    & \geq - Ce^{-2t}(1+t^{\frac{\beta}{2}-1}).
\end{align*}
\end{proof}
 
\subsubsection{Proof of Proposition \ref{prop:liptran}}\label{sec:prop:liptran}
 
\begin{proof}
For $t\in (0,1]$, let us define
\begin{equation}\label{eq:V}
    V(t,x):=\frac{1}{t} \nabla \log Q_{\log(t^{-1})} r(x)=\frac{1}{t}\big(x+s(\log(t^{-1}),x)\big)
\end{equation}
with
\begin{equation}\label{eq:Qtr2}
    Q_{\log(t^{-1})} r(x)=  \int r(tx+\sqrt{1-t^2}z)d\gamma_d(z).
\end{equation}
From Proposition 2 in  \cite{stephanovitch2024smooth} we have that the transport map of the Probability flow ODE is the solution at time $t=1$ to the equation \begin{equation}\label{eq:PDE}
\partial_t X_t(x) =V(t,X_t(x)), \quad X_0(x)=x.
\end{equation}
Then, using Corollary \ref{coro:finiteintegrabilityofscore} we have that
\begin{equation*}
\int_0^1 \sup_{x\in \mathbb{R}^d} \lambda_{\max}\big(\nabla V_t(x)\big)\leq C,
\end{equation*}
so from Lemma 2 in \cite{stephanovitch2024smooth} we get the result.
\end{proof}
 
\section{Proof of Theorem \ref{theo:highregu}}
\label{sec:theo:highregu}
This section is dedicated to the proof of Theorem \ref{theo:highregu}.
In order to compute the higher differentials of the score, we make use of the Faa di Bruno formula. For two unit-variate functions $f,g:\mathbb{R}\rightarrow \mathbb{R}$, the Faa di Bruno formula states that the $k$-derivative of the composition of $f$ and $g$ is
\begin{equation*}
    (f\circ g)^{(k)}(x) =\sum_{\pi \in \Pi_k} f^{(|\pi|)} (g(x)) \prod_{B\in \pi} g^{(|B|)} (x),
\end{equation*}
where $\Pi_k$ denotes the set of all partitions of the set $\{1,...,k\}$. For simplicity, in the multivariate setting we will keep this notation by writing for $f,g:\mathbb{R}^d\rightarrow \mathbb{R}^d$,
$$\nabla^k (f\circ g)(x)=\sum_{\pi \in \Pi_k} \nabla^{|\pi|}f(g(x)) \prod_{B\in \pi} \nabla^{|B|} g(x),$$
where the latter is defined by
\begin{align*}
&\left(\sum_{\pi \in \Pi_k} \nabla^{|\pi|}f(g(x)) \prod_{B\in \pi} \nabla^{|B|} g(x)\right)w^k\\
& := \sum_{\pi=(B_1,B_2,...,B_i) \in \Pi_k} \nabla^{|\pi|}f(g(x)) \left( \nabla^{|B_1|} g(x)w^{|B_1|},\nabla^{|B_2|}g(x)w^{|B_2|},...,\nabla^{|B_i|} g(x)w^{|B_i|}\right).
\end{align*}

\subsection{Regularity for large time $t$}\label{sec:regulargetime}

We begin by considering the case where \( t \) is not too small; that is, we fix a constant \( C^\star > 0 \) and assume \( t \geq \log(\epsilon^{-1})^{-C^\star} \). Throughout the proof, the value of \( C^\star \) may be increased as needed, but it will always remain a constant independent of \( \epsilon \).

Let us define
\begin{equation}\label{eq:ystar}
    y^\star = \argmin_y u(y)
\end{equation}
 and take the set 
 \begin{equation}
     A_t^\epsilon = A_\infty^\epsilon,
 \end{equation}
 with
\begin{equation}\label{eq:aepstsmall}
A_\infty^\epsilon:=\{y\in \mathbb{R}^d\big| \ \|y-y^\star\|^2\leq C^\star \big(\log(\epsilon^{-1})(1+\alpha^{-1})+K\big)\}.
\end{equation}
Let us first show that this set contains at least $1-\epsilon$ of the mass.
\begin{proposition}\label{prop:masstlarge}
 There exists $C^\star>0$ large enough such that the set $A_\infty^\epsilon$ defined in \eqref{eq:aepstsmall} satisfies for all $t\geq 0$ that
 $p_t(A_\infty^\epsilon)\geq 1-\epsilon.$
\end{proposition}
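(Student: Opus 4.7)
The plan is to exploit the exact representation \( X_t \stackrel{d}{=} e^{-t} X_0 + \sqrt{1-e^{-2t}}\, Z \) where \( X_0 \sim p \) is independent of \( Z \sim \mathcal{N}(0, I_d) \), and to reduce the claim to a concentration estimate of \( X_0 - y^\star \) under \( p \) together with the standard Gaussian tail of \( \|Z\| \). Writing
\[
X_t - y^\star \;=\; e^{-t}(X_0 - y^\star) \;+\; (e^{-t} - 1)\, y^\star \;+\; \sqrt{1-e^{-2t}}\, Z,
\]
Assumption \ref{assum:higherorder}-2 gives \( \|y^\star\| \leq K \), so the middle term is deterministically bounded by \( K \). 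Setting \( R^2 = C^\star \big( \log(\epsilon^{-1})(1 + \alpha^{-1}) + K \big) \) and using \( e^{-t} \leq 1 \), \( \sqrt{1-e^{-2t}} \leq 1 \), it will suffice to show, for \( C^\star \) large enough depending only on \( \alpha, K, d \),
\[
\Pr\bigl(\|X_0 - y^\star\| \geq R/3\bigr) \leq \epsilon/2 \qquad \text{and} \qquad \Pr\bigl(\|Z\| \geq R/3\bigr) \leq \epsilon/2.
\]

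The second estimate is immediate from the standard Gaussian tail \( \Pr(\|Z\| \geq r) \leq 2 \exp(-c(r^2 - d)) \), which becomes smaller than \( \epsilon/2 \) once \( R^2 \gtrsim \log(\epsilon^{-1}) + d \). For the first estimate, I would exploit the structure \( p(x) = Z_p^{-1} \exp(-u(x) + a(x)) \) with \( \nabla^2 u \succeq \alpha \mathrm{Id} \) and \( \nabla u(y^\star) = 0 \), which yields \( u(x) \geq u(y^\star) + \tfrac{\alpha}{2}\|x - y^\star\|^2 \). Combined with \( \|a\|_{L^\infty(\mathrm{supp}\, p)} \leq K \) from Assumption \ref{assum:higherorder}-1, this gives the pointwise bound
\[
p(x) \;\leq\; \frac{1}{Z_p}\, \exp\!\Bigl(-u(y^\star) + K - \tfrac{\alpha}{2}\|x - y^\star\|^2\Bigr).
\]
To turn this into the desired sub-Gaussian tail, I need a matching lower bound on \( Z_p \). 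Integrating \( e^{-u+a} \) over a ball \( B(y^\star, \rho) \) whose radius is chosen via Assumption \ref{assum:higherorder}-3 (applied at \( x = y^\star \)) ensures that both \( u(y) - u(y^\star) \) and \( |a(y) - a(y^\star)| \) remain bounded by a constant depending only on \( K \), so that \( Z_p \geq c\, e^{-u(y^\star) - C} \) for an absolute constant. Plugging in gives a bound of the form \( \Pr_p(\|X_0 - y^\star\| \geq r) \leq C e^{-\alpha r^2 / 2} \), which falls below \( \epsilon/2 \) as soon as \( r^2 \gtrsim \alpha^{-1}(\log(\epsilon^{-1}) + 1) \).

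Taking \( C^\star \) large enough (depending only on \( \alpha, K, d \)) so that \( R^2/9 \) exceeds both thresholds, and recalling \( R^2 \geq C^\star K \geq 9 K^2 \) so that the deterministic middle term \( (1-e^{-t})\|y^\star\| \leq K \) is at most \( R/3 \), the triangle inequality gives \( \Pr(\|X_t - y^\star\| > R) \leq \epsilon \), which is the desired \( p_t(A_\infty^\epsilon) \geq 1 - \epsilon \).

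The main obstacle is controlling the ratio \( e^{u(y^\star)} / Z_p \) by a constant that is uniform in the target distribution, since \( u(y^\star) \) is not directly bounded by the assumptions. This is exactly the role of the moderate-growth property of Assumption \ref{assum:higherorder}-3 evaluated at \( y^\star \): it forces the Hessian of \( u \) to remain controlled on a (possibly small) neighborhood whose size depends only on \( |u(y^\star)| \) and \( K \), which is what makes the lower bound on \( Z_p \) dimension-and-\( \epsilon \) independent.
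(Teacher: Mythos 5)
Your proof is correct and follows essentially the same route as the paper's: decompose \( X_t \stackrel{d}{=} e^{-t}X_0 + \sqrt{1-e^{-2t}}\,Z \), absorb the deterministic shift via \( \|y^\star\|\le K \), use the standard Gaussian tail for \( Z \), and use the sub-Gaussian concentration of \( p \) around \( y^\star \) coming from \( \nabla^2 u \succeq \alpha\,\mathrm{Id} \) together with \( \|a\|_\infty \le K \). The paper states this concentration step without detail; your use of Assumption \ref{assum:higherorder}-3 at \( y^\star \) to lower-bound the mass of \( p \) near \( y^\star \), and hence bound \( e^{-u(y^\star)} \) from above by a constant, is a valid way to make it rigorous.
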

The proof of Proposition \ref{prop:masstlarge} can be found in Section \ref{sec:prop:masstlarge}.
Let us now compute the $k$-differential of $s(t,\cdot)$ for $k\in \mathbb{N}_{\geq 0}$.
From the Faa Di Bruno formula we have 
\begin{align}\label{align:gradvtpetit}
    \nabla^{k}\left( s(t,\cdot)+\text{Id}\right)(x)&=\sum_{\pi \in \Pi_{k+1}} \frac{(-1)^{|\pi|+1}}{(Q_{t} r(x))^{|\pi|}} \prod_{B\in \pi} \nabla^{|B|} Q_{t} r(x)\nonumber \\
    & =\sum_{\pi \in \Pi_{k+1}} (-1)^{|\pi|+1} \prod_{B\in \pi} \frac{1}{Q_{t} r(x)} \int \nabla^{|B|}_x\varphi^{t,x}(y)r(y)dy\nonumber\\
    & = \frac{e^{-(k+1)t}}{(1-e^{-2t})^{k+1}}\sum_{\pi \in \Pi_{k+1}} (-1)^{|\pi|+1} \prod_{B\in \pi} \int F_{|B|}(y-e^{-t}x)dp^{t,x}(y),
\end{align}
with $$ F_{|B|}(z):=(1-e^{-2t})^{|B|}\exp(\frac{\|z\|^2}{1-e^{-2t}})\nabla^{|B|} \exp(-\frac{\|z\|^2}{1-e^{-2t}}).$$

Let us give a bound on the quantity $\int F_{|B|}(y-e^{-t}x)dp^{t,x}(y)$.

\begin{lemma}\label{lemma:concentrationsmallthigh}
There exists $C^\star>0$ such that for all $l\in \mathbb{N}_{>0}$  and $x\in A_t^\infty$ defined in \eqref{eq:aepstsmall}, we have
$$\|\int F_{l}(y-e^{-t}x)dp^{t,x}(y)\|\leq C\log(\epsilon^{-1})^{l C_2}.$$
\end{lemma}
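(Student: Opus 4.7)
The plan is to exploit two ingredients: the polynomial (Hermite) structure of $F_l$ and the $\alpha$-log-concavity of $p^{t,x}$ up to a bounded perturbation by $e^{a}$.

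\textbf{Step 1 (polynomial bound on $F_l$).} Setting $\sigma = \sqrt{1-e^{-2t}} \in (0,1]$ and rescaling $w = z\sqrt{2}/\sigma$, a direct induction on $l$ shows that
\[
F_l(z) = (-1)^l\,2^{l/2}\,\sigma^{l}\,\mathrm{He}_l(w),
\]
where $\mathrm{He}_l$ is the probabilists' multivariate Hermite polynomial of degree $l$; for instance $F_1(z) = -2z$ and $F_2(z) = 4\,z^{\otimes 2} - 2\sigma^2\,\mathrm{Id}$. Each Hermite coefficient is a universal constant depending only on $l$, so substituting $w = z\sqrt{2}/\sigma$ and using $\sigma \leq 1$ yields the uniform bound
\[
\|F_l(z)\| \leq C_l\bigl(1 + \|z\|^l\bigr).
\]
The claim therefore reduces to showing $\int \|y - e^{-t}x\|^l\,dp^{t,x}(y) \leq C_l \log(\epsilon^{-1})^{lC_2}$ for $x \in A_\infty^\epsilon$.

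\textbf{Step 2 (moment bound via log-concavity).} I decompose $p^{t,x}(y) \propto e^{a(y)}\,\tilde q(y)$, where
\[
\tilde q(y) \propto \exp\Bigl(-u(y) - \tfrac{e^{-2t}}{2(1-e^{-2t})}\|y\|^2 + \tfrac{e^{-t}}{1-e^{-2t}}\langle y, x\rangle\Bigr)
\]
is $\alpha$-log-concave by Assumption \ref{assum:1}-1 (the Hessian of its negative log-density is at least $\alpha\,\mathrm{Id}$). Assumption \ref{assum:higherorder}-1 further ensures $\|a\|_\infty \leq K$ on $\mathrm{supp}(p)$, so $p^{t,x}$ and $\tilde q$ are comparable up to a multiplicative factor $e^{2K}$. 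Applying Lemma~\ref{lemma:extendedbl} to $\tilde q$ (with the coordinate functions $y \mapsto \langle y, e_i\rangle$) and transferring the estimate to $p^{t,x}$ yields $\int \|y - \bar y\|^l\,dp^{t,x}(y) \leq C_l$, uniformly in $t$, where $\bar y$ denotes the mean of $p^{t,x}$. A concentration argument analogous to equation \eqref{eq:fhskkssyzvaidod}, combined with $\|y^\star\| \leq K$ and the $\alpha$-strong convexity of $u$, places $\bar y$ within $C\log(\epsilon^{-1})^{1/2}$ of $y^\star$ for every $x \in A_\infty^\epsilon$. Since $\|e^{-t}x\| \leq \|x\| \leq K + C\log(\epsilon^{-1})^{1/2}$, the triangle inequality then gives
\[
\int \|y - e^{-t}x\|^l\,dp^{t,x}(y) \leq C_l \log(\epsilon^{-1})^{l/2},
\]
which combined with Step 1 yields the lemma (with $C_2 = 1/2$ sufficing).

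The main obstacle is Step 2, and specifically controlling the posterior mean uniformly over the full range $t \geq \log(\epsilon^{-1})^{-C^\star}$: when $t$ is small, the Gaussian factor $\varphi^{t,x}$ pins $\bar y$ near $e^{-t}x$, while when $t$ is larger, the factor $r(y)$ drags it towards $y^\star$. In both regimes the argument must balance the strong convexity of $u$ against the uniform boundedness of $a$ on $\mathrm{supp}(p)$.
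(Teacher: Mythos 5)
Your proof is essentially correct but follows a genuinely different route from the paper's for the key moment estimate. Step 1 (reducing $F_l$ to a polynomial bound $\|F_l(z)\|\leq C_l(1+\|z\|^l)$ via the Hermite structure) coincides with the paper's first step. For the moment bound, however, the paper does \emph{not} use log-concavity at all: it crudely lower-bounds the normalizing constant $Q_tr(x)\geq C^{-1}\exp\bigl(-C(C^\star\log(\epsilon^{-1})+C)/(1-e^{-2t})\bigr)$ using a fixed ball carrying half the mass of $p$, upper-bounds the tail $\int\|y\|^i r(y)\varphi^{t,x}(y)\mathds{1}_{\|y\|\geq\log(\epsilon^{-1})^{R_2}}dy$ by $\exp\bigl(-\alpha\log(\epsilon^{-1})^{R_2}/(4(1-e^{-2t}))\bigr)$ using the $\alpha$-convexity of $u$, and wins by taking $R_2$ large so that $\log(\epsilon^{-1})^{R_2}$ beats $\log(\epsilon^{-1})$ inside the exponential, the $(1-e^{-2t})^{-1}$ factors cancelling uniformly in $t$. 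This yields the exponent $lR_2$ with $R_2$ possibly large. Your route — writing $p^{t,x}\propto e^{a}\tilde q$ with $\tilde q$ log-concave of parameter $\alpha+\tfrac{e^{-2t}}{1-e^{-2t}}$, invoking Lemma~\ref{lemma:extendedbl}, and then locating the mean — is cleaner and gives the sharper exponent $l/2$.

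The one step you only gesture at is the crux you yourself identify: placing the mean $\bar y$ of $p^{t,x}$ within $C\log(\epsilon^{-1})^{1/2}$ of $y^\star$ uniformly in $t>0$. The reference to \eqref{eq:fhskkssyzvaidod} is not quite apposite, since that estimate is derived under Assumption~\ref{assum:2}-2 (a two-sided bound $\nabla^2 u\preceq A\,\text{Id}$), which is not available here. The step can nonetheless be completed with only $\alpha$-strong convexity: the convex part of the potential of $p^{t,x}$ is $u(y)+\tfrac{c}{2}\|y-e^{t}x\|^2$ with $c=\tfrac{e^{-2t}}{1-e^{-2t}}$, and first-order optimality at its minimizer $y_{\min}$ gives $\|y_{\min}-y^\star\|\leq\tfrac{c}{\alpha+c}\|e^{t}x-y^\star\|\leq\min\bigl(e^{t},\tfrac{e^{-t}}{\alpha(1-e^{-2t})}\bigr)\cdot C\log(\epsilon^{-1})^{1/2}\leq C\log(\epsilon^{-1})^{1/2}$, after which the bounded perturbation $e^{a}$ and the Brascamp--Lieb first-moment bound move the mean by only $O(1)$. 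With that step written out, your argument is complete and in fact slightly sharper than the paper's.
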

The proof of Lemma \ref{lemma:concentrationsmallthigh} can be found in Section \ref{sec:lemma:concentrationsmallthigh}. Combining this result with \eqref{align:gradvtpetit},  we can directly conclude the case of large time $t$.

\begin{proposition}\label{prop:fjidididjhdhzpmwjf}
For all $k\in \mathbb{N}$, $\epsilon\in (0,1)$, $t>0$ and $x\in A_\infty^\epsilon$ defined in \eqref{eq:aepstsmall}, we have
\begin{equation*}
    \|\nabla^{k}\left( s(t,\cdot)+\text{Id}\right)(x)\| \leq C\log(\epsilon^{-1})^{C_2(1+k)}  \frac{e^{-(k+1)t}}{(1-e^{-2t})^{k+1}}.
\end{equation*}
In particular, if $t\geq \log(\epsilon^{-1})^{-C^\star}$ we have
\begin{equation*}
    \|\nabla^{k}\left( s(t,\cdot)+\text{Id}\right)(x)\| \leq C\log(\epsilon^{-1})^{(C^\star+C_2)(1+k)} e^{-(k+1)t}.
\end{equation*}
\end{proposition}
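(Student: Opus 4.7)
The plan is to combine the Faà di Bruno expansion \eqref{align:gradvtpetit} with the concentration estimate in Lemma~\ref{lemma:concentrationsmallthigh}; essentially everything is already in place, so the proof is a bookkeeping argument tracking exponents.

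First I would apply the triangle inequality to \eqref{align:gradvtpetit}, obtaining
\begin{align*}
\bigl\|\nabla^{k}\bigl( s(t,\cdot)+\mathrm{Id}\bigr)(x)\bigr\|
\leq \frac{e^{-(k+1)t}}{(1-e^{-2t})^{k+1}}
\sum_{\pi \in \Pi_{k+1}} \prod_{B\in \pi} \Bigl\|\int F_{|B|}(y-e^{-t}x)\,dp^{t,x}(y)\Bigr\|.
\end{align*}
Then, for each fixed partition $\pi$ of $\{1,\dots,k+1\}$ and each block $B\in\pi$, Lemma~\ref{lemma:concentrationsmallthigh} bounds the corresponding integral by $C\log(\epsilon^{-1})^{|B|\,C_2}$ (here we use $x\in A_\infty^\epsilon$). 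Since the block sizes satisfy $\sum_{B\in\pi}|B|=k+1$, the product over blocks collapses to a single power $\log(\epsilon^{-1})^{(k+1)C_2}$, multiplied by $C^{|\pi|}$. The outer sum over $\Pi_{k+1}$ contributes at most the Bell number $B_{k+1}$, which depends only on $k$ and can be absorbed into a constant $C$ that we allow to depend on $k$. This yields the first claimed bound
\begin{align*}
\bigl\|\nabla^{k}\bigl( s(t,\cdot)+\mathrm{Id}\bigr)(x)\bigr\|
\leq C\,\log(\epsilon^{-1})^{C_2(1+k)}\,\frac{e^{-(k+1)t}}{(1-e^{-2t})^{k+1}}.
\end{align*}

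For the second inequality, I would simply control the singular factor $(1-e^{-2t})^{-(k+1)}$ under the assumption $t\geq \log(\epsilon^{-1})^{-C^\star}$. Using the elementary estimate $1-e^{-2t}\geq c\min(t,1)$ for some absolute constant $c>0$, one obtains $(1-e^{-2t})^{-(k+1)}\leq C_k\,(t^{-(k+1)}\vee 1) \leq C_k\,\log(\epsilon^{-1})^{C^\star(k+1)}$, which, combined with the first bound, gives the stated estimate after renaming constants.

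There is no real obstacle: the only thing to watch is the exponent accounting in step two (making sure the additivity $\sum |B| = k+1$ is used, and that the Bell-number combinatorial factor is allowed to depend on $k$). The one place a reader might pause is whether Lemma~\ref{lemma:concentrationsmallthigh} is applicable uniformly on $A_\infty^\epsilon$, but this is exactly what that lemma provides (for a possibly enlarged universal constant $C^\star$), so nothing beyond careful substitution is required.
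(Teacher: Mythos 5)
Your proof is correct and follows exactly the route the paper intends: the paper itself derives Proposition~\ref{prop:fjidididjhdhzpmwjf} by "combining" the Faà di Bruno identity \eqref{align:gradvtpetit} with Lemma~\ref{lemma:concentrationsmallthigh}, and your exponent accounting ($\sum_{B\in\pi}|B|=k+1$, Bell-number factor absorbed into a $k$-dependent constant) together with the elementary bound $1-e^{-2t}\geq c\min(t,1)$ for the second inequality is precisely the omitted bookkeeping.
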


\subsection{Regularity for small time $t$}
By opposition to Section \ref{sec:regulargetime}, let us now consider the case $0<t\leq \log(\epsilon^{-1})^{-C^\star}$. We take
\begin{equation}
    A^\epsilon_t=A^\epsilon_0
\end{equation}
with
\begin{align}\label{align:aepst}
A^\epsilon_0:=\{y\in \mathbb{R}^d|\ u(y)\leq K \big(\log(\epsilon^{-1})+R+1\big)^K\},
\end{align}
for a certain $R>K$. Let us fist show that this set contains at least $1-\epsilon$ of the mass.
\begin{proposition}\label{prop:masstsmall}
 There exists $C^\star,R>0$ large enough such that the set $A_0^\epsilon$ defined in \eqref{align:aepst} satisfies for all $0\leq t\leq \log(\epsilon^{-1})^{-C^\star}$ that
 $p_t(A_0^\epsilon)\geq 1-\epsilon.$
\end{proposition}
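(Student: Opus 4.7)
The plan is to couple $X_t \sim p_t$ with $X_0 \sim p$ via the explicit OU representation $X_t = e^{-t}X_0 + \sqrt{1-e^{-2t}}\,Z$ from~\eqref{align:lawofxt}, with $Z \sim \mathcal{N}(0,I_d)$ independent of $X_0$, and to build a good event of probability at least $1-\epsilon$ on which $X_t$ lies in $A_0^\epsilon$. Write $M := K(\log(\epsilon^{-1})+R+1)^K$ for the threshold in~\eqref{align:aepst} and fix the intermediate level $M_0 := \log(\epsilon^{-1})+R$, together with $B_0 := \{y \in \mathbb{R}^d : u(y) \leq M_0\}$. On the event $\{X_0 \in B_0\}\cap\{\|X_t - X_0\|\leq (K|u(X_0)|\vee 1)^{-K}\}$, Assumption~\ref{assum:higherorder}-3 with $i = 0$ directly yields $u(X_t)\leq |u(X_t)| \leq K(|u(X_0)|^K + 1) \leq K(M_0^K + 1)\leq M$, where the last step uses the elementary inequality $(a+1)^K \geq a^K + 1$ for $a \geq 0$ and $K \geq 1$; hence $X_t \in A_0^\epsilon$.

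\textbf{Two-sided bound on $u^\star$ and tail of $p$.} A preliminary step shows $|u^\star|\leq C$ for a constant depending only on $\alpha,\beta,K,d$. The upper bound $u^\star \leq C$ follows from combining $\int e^{-u+a}\,\mathrm{d}y = 1$, the inequality $|a| \leq K$ on $\mathrm{supp}(p)$ (inherited from $a\in\mathcal{H}^\beta_K(\mathrm{supp}(p))$), and the Gaussian estimate $\int e^{-u}\leq e^{-u^\star}(2\pi/\alpha)^{d/2}$ from strong convexity. For the matching lower bound, if $u^\star = -L$ with $L$ large, Assumption~\ref{assum:higherorder}-3 with $i = 1$ at $x = y^\star$ (defined in~\eqref{eq:ystar}) forces $|u - u^\star| \leq 2K^{1-K}$ on the ball $B(y^\star,(KL)^{-K})$, so $p$ has mass at least $c\,e^{L-C}(KL)^{-Kd}$ there; the normalization $\int p = 1$ then imposes $L \lesssim \log L$, hence $L \leq C$. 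Using $p \leq e^K e^{-u}$ together with the splitting $e^{-u} = e^{-u/2}e^{-u/2}$, strong convexity next gives $p(B_0^c) \leq Ce^{-M_0/2}$, which is $\leq \epsilon/2$ for $R$ large enough depending only on $\alpha,K,d$.

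\textbf{Displacement control and conclusion.} Standard Gaussian tails give $\mathbb{P}(\|Z\| \geq C\sqrt{d+\log(\epsilon^{-1})}) \leq \epsilon/2$. On $B_0$, the strong-convexity bound $\|X_0 - y^\star\|^2 \leq 2(M_0 - u^\star)/\alpha$ combined with $\|y^\star\| \leq K$ (Assumption~\ref{assum:higherorder}-2) yields $\|X_0\|\leq C\sqrt{M_0}$, and for $t \leq 1$,
\[
\|X_t - X_0\| \leq (1-e^{-t})\|X_0\| + \sqrt{1-e^{-2t}}\|Z\| \leq C\sqrt{t}\,\bigl(\log(\epsilon^{-1})+R\bigr)^{1/2}.
\]
Since $|u(X_0)| \leq \max(|u^\star|,M_0)\leq M_0$ once $R$ exceeds the $|u^\star|$ bound, the tolerance $(K|u(X_0)|\vee 1)^{-K}$ is at least $c\log(\epsilon^{-1})^{-K}$, so choosing $C^\star \geq 2K+1$ guarantees that the displacement fits inside the tolerance whenever $t\leq\log(\epsilon^{-1})^{-C^\star}$. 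A union bound over the two failure events then yields $p_t(A_0^\epsilon)\geq 1-\epsilon$.

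\textbf{Main obstacle.} The subtle point is the joint calibration of $R$ and $C^\star$: $R$ is fixed first, large enough to absorb the $p$-tail and to dominate $|u^\star|$; $C^\star$ is then chosen so that the $\sqrt{t}$-scale OU displacement (of order $\sqrt{t}\log(\epsilon^{-1})^{1/2}$) fits inside the Assumption~\ref{assum:higherorder}-3 window (of order $\log(\epsilon^{-1})^{-K}$). The lower bound on $u^\star$ is essential for this: without it, the tolerance window would shrink with $|u^\star|$, making it impossible to trigger Assumption~\ref{assum:higherorder}-3 with a time threshold polynomial in $\log(\epsilon^{-1})^{-1}$.
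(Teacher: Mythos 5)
Your proof follows the same route as the paper's (which is a three-line argument): couple $X_t=e^{-t}X_0+\sqrt{1-e^{-2t}}\,Z$, intersect the event $\{u(X_0)\le \log(\epsilon^{-1})+R\}$ with a smallness event for the displacement, and invoke Assumption~\ref{assum:higherorder}-3 with $i=0$ to transfer the sublevel bound from $X_0$ to $X_t$. You in fact supply several details the paper silently assumes: the two-sided bound $|u^\star|\le C$ (genuinely needed, both so that $|u(X_0)|\le M_0$ on $B_0$ and so that the tolerance window $(K|u(X_0)|\vee 1)^{-K}$ does not collapse), the contribution $(1-e^{-t})\|X_0\|$ to the displacement, and the explicit calibration $C^\star\gtrsim 2K+1$. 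These additions are correct and welcome.

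The one step that does not hold as written is the tail bound: you claim $p(B_0^c)\le Ce^{-M_0/2}\le \epsilon/2$ for $R$ large enough, but with $M_0=\log(\epsilon^{-1})+R$ one has $e^{-M_0/2}=e^{-R/2}\sqrt{\epsilon}$, and $\sqrt{\epsilon}\not\le \epsilon/(2C)$ as $\epsilon\to 0$ for any fixed $R$. The splitting $e^{-u}=e^{-u/2}e^{-u/2}$ inherently loses a factor $\epsilon^{1/2}$; a layer-cake/strong-convexity argument sharpens this to $p(\{u>M_0\})\le Ce^{-R}\,\epsilon\,\log(\epsilon^{-1})^{d/2}$, but the polylogarithmic factor is genuinely present (e.g.\ for $p=\mathcal{N}(0,\alpha^{-1}I_d)$ with $d\ge 3$, $p(\{u>\log(\epsilon^{-1})+R\})\asymp \log(\epsilon^{-1})^{d/2-1}e^{-R}\epsilon$), so no fixed $R$ yields $\le\epsilon/2$ for all small $\epsilon$. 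To be fair, the paper's own proof asserts $\mathbb{P}(u(X_0)\le\log(\epsilon^{-1})+R)$ is close enough to $1$ without justification and thus glosses over exactly the same point; the issue lies in the choice of the intermediate level $M_0$ (and ultimately in the constants of $A_0^\epsilon$) rather than in your strategy, but your specific inequality is the place where the argument, as written, breaks.
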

The proof of Proposition \ref{prop:masstsmall} can be found in Section \ref{sec:prop:masstsmall}.
In the following, we treat separately the bound on $\|\nabla^\gamma s(t,\cdot)\|$ in the cases $\gamma=0$, $\gamma=1$ and $\gamma\geq 2$.

\subsubsection{Bound on $s(t,\cdot)$ for $\beta\leq 1$}\label{sec:boundonst}

Let us start with the bound on the infinite norm of $s(t,\cdot)$, we have from Proposition \ref{prop:values} that
\begin{align}\label{eq:vghjxhzz}
s(t,x)=-x+\frac{e^{-t}}{1-e^{-2t}}\int (y-e^{-t}x)dp^{t,x}(y).
\end{align}

Now, using that
$$\int (y-e^{-t}x)\varphi^{t,x}(y)dy=0$$
and applying Cauchy-Schwarz inequality, we obtain
\begin{align}\label{eq:premconcentpetit}
    \|\int (y-e^{-t}x) dp^{t,x}(y)& = \|\int (y-e^{-t}x) \frac{r(y)-r(e^{-t}x)}{Q_tr(x)}\varphi^{t,x}(y)dy\|\nonumber\\
    &\leq \left(\int \|y-e^{-t}x\|^2\varphi^{t,x}(y)dy\right)^{1/2}\left( \int \left( \frac{r(y)-r(e^{-t}x)}{Q_tr(x)}\right)^{2}\varphi^{t,x}(y)dy\right)^{1/2}\nonumber\\
    &\leq (1-e^{-2t})^{1/2}\left( \int \left( \frac{r(y)-r(e^{-t}x)}{Q_tr(x)}\right)^{2}\varphi^{t,x}(y)dy\right)^{1/2}.
\end{align}
Let us now extract an additional $(1-e^{-2t})^{\beta \wedge 1}$ factors from \eqref{eq:premconcentpetit} using the following Lemma.
\begin{lemma}\label{lemma:boundsquared} There exists $C^\star>0$ such that for all $t\in (0, \log(\epsilon^{-1})^{-C^\star})$ and $A_0^\epsilon$ defined in \eqref{align:aepst}, we have
\begin{align*}
    \int \left( \frac{r(y)-r(e^{-t}x)}{Q_tr(x)}\right)^{2}\varphi^{t,x}(y)dy\leq C\log(\epsilon^{-1})^{C_2}(1-e^{-2t})^{\beta\wedge 1}.
\end{align*}
\end{lemma}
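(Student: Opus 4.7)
The plan is to change variables $y = a_0 + \sqrt{v}\,z$ with $a_0 := e^{-t}x$ and $v := 1-e^{-2t}$, reducing the integral to
\[
\frac{1}{Q_t r(x)^{2}}\int_{\mathbb{R}^d}\bigl(r(a_0+\sqrt{v}\,z) - r(a_0)\bigr)^{2}\, d\gamma_d(z),
\]
and to split the $z$-integral into a near regime $\{\|z\| \le R_1\}$ and a tail $\{\|z\| > R_1\}$, where $R_1 = C\sqrt{\log(1/v)} + C\sqrt{\log(\epsilon^{-1})}$ is of polylogarithmic size in $\epsilon^{-1}$. The hypothesis $t \le \log(\epsilon^{-1})^{-C^\star}$ will, for $C^\star$ sufficiently large, guarantee both that $\sqrt{v}\,R_1$ stays within the validity radius of the local regularity estimates of Assumption~\ref{assum:higherorder}-3 on $A_0^\epsilon$, and that the Gaussian tail beyond $R_1$ is dominated by $v^{\beta}$.

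On the near regime I write $r(y) = r(a_0)\exp\bigl(g(y) - g(a_0)\bigr)$ with $g := \log r = -u + a + \|\cdot\|^{2}/2$ and control $|g(y) - g(a_0)|$ term by term: the Hölder regularity of $a$ on $supp(p)$ gives $|a(y) - a(a_0)| \le K\|y - a_0\|^\beta$; the local Lipschitz bound on $\nabla u$ available on $A_0^\epsilon$ via Assumption~\ref{assum:higherorder}-3 gives $|u(y) - u(a_0)| \le C\log(\epsilon^{-1})^{C_2}\|y - a_0\|$; and $\bigl|\|y\|^{2} - \|a_0\|^{2}\bigr|/2$ is similarly polylog-Lipschitz since $\|a_0\|$ is polylog by the strong convexity of $u$. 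Since $\beta \le 1$ and $\|y - a_0\| \le 1$ on the near regime, the $\beta$-Hölder part dominates and
\[
|g(y) - g(a_0)| \le C\log(\epsilon^{-1})^{C_2}\, v^{\beta/2}\,\|z\|^{\beta}.
\]
Combined with $(e^{s} - 1)^{2} \le C s^{2}$ for bounded $|s|$, this yields $(r(y) - r(a_0))^{2} \le C\log(\epsilon^{-1})^{C_2}\,r(a_0)^{2}\,v^{\beta}\,\|z\|^{2\beta}$, whose Gaussian integral is at most $C\log(\epsilon^{-1})^{C_2}\,r(a_0)^{2}\,v^{\beta}$. The same comparison gives $r(y) \ge r(a_0)/2$ on an inverse-polylog-radius ball around $a_0$ whose $\varphi^{t,x}$-mass is a positive dimensional constant (since $\sqrt{v}$ is smaller than the ball radius), yielding the matching lower bound $Q_t r(x) \ge c\,r(a_0)$.

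The main obstacle is the tail regime, handled via $(r(y) - r(a_0))^{2} \le 2r(y)^{2} + 2r(a_0)^{2}$: the $r(a_0)^{2}$ contribution is absorbed by the Gaussian tail $e^{-R_1^{2}/2} \le v^{\beta}/\log(\epsilon^{-1})^{C_2}$, while for the $r(y)^{2}$ contribution one uses the identity (valid for $v < 1/2$)
\[
r(y)^{2}\varphi^{t,x}(y) = C_d\,v^{-d/2}\,p(y)^{2}\exp\!\Bigl(\tfrac{\|a_0\|^{2}}{1-2v} - \tfrac{1-2v}{2v}\bigl\|y - \tfrac{a_0}{1-2v}\bigr\|^{2}\Bigr),
\]
which exhibits a Gaussian factor of variance $\sim v$ centered near $a_0$. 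Integrating over $\{\|y - a_0\| > \sqrt{v}\,R_1\}$ produces super-exponential decay $e^{-c R_1^{2}}$, and combining with the uniform boundedness of $p$ (from Assumption~\ref{assum:1}-1) and the lower bound $Q_t r(x) \ge c\,r(a_0)$ absorbs the whole tail contribution into the desired $v^{\beta}$ estimate. The truly delicate step here is this $r(y)^{2}$ tail: one must cancel the growth $e^{\|y\|^{2}}$ hidden in $r(y)^{2}$ against the decay $e^{-\|y - a_0\|^{2}/(2v)}$ of $\varphi^{t,x}$, a cancellation available precisely because $v$ is small.
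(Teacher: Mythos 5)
Your overall strategy is the same as the paper's: a near/far splitting around $e^{-t}x$, a Hölder gain of order $\|y-e^{-t}x\|^{\beta\wedge 1}$ on the near part via the exponential form of $r$ and Assumption~\ref{assum:higherorder}-3, the lower bound $Q_tr(x)\gtrsim r(e^{-t}x)$ (this is the paper's Proposition~\ref{prop:enfaitelemma}), and Gaussian domination on the far part. Your explicit square-completion identity for $r(y)^2\varphi^{t,x}(y)$ is a clean way of doing what the paper does more tersely. However, there is a genuine quantitative gap in the tail regime: your cutoff $R_1=C\sqrt{\log(1/v)}+C\sqrt{\log(\epsilon^{-1})}$ is too small. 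After using $Q_tr(x)\gtrsim r(e^{-t}x)$ and $p\leq C$, the $r(y)^2$ tail contribution is of order $e^{2u(e^{-t}x)}\,e^{-cR_1^2}$ (the factor $e^{\|a_0\|^2(\frac{1}{1-2v}-1)}$ is indeed $O(1)$, but $e^{-\|a_0\|^2}$ from $\gamma_d(a_0)^2$ only cancels the Gaussian prefactor, leaving $p(a_0)^{-2}\sim e^{2u(e^{-t}x)}$). On $A_0^\epsilon$ as defined in \eqref{align:aepst}, $u(e^{-t}x)$ can be as large as $K(\log(\epsilon^{-1})+R+1)^K$ with $K>1$, so $e^{2u(e^{-t}x)}$ is super-polynomial in $\epsilon^{-1}$, whereas $e^{-cR_1^2}=(v\epsilon)^{cC^2}$ is only polynomial in $\epsilon$ and $v$. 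No fixed constant $C$ makes the product small, so the tail estimate fails as written whenever $K>1$. (The $r(a_0)^2$ part of your tail is fine precisely because there the ratio $r(a_0)^2/Q_tr(x)^2$ is $O(1)$.)

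The fix is the one the paper uses: place the cutoff at a \emph{fixed} inverse-polylogarithmic distance in $y$-space, $\delta=R^{-1}\log(\epsilon^{-1})^{-R}$, i.e.\ $R_1=\delta/\sqrt{v}$. Since $v\leq 2t\leq 2\log(\epsilon^{-1})^{-C^\star}$, this gives $R_1\gtrsim\log(\epsilon^{-1})^{(C^\star-2R)/2}$ and hence $e^{-cR_1^2}=\exp\bigl(-c\log(\epsilon^{-1})^{C^\star-2R}\bigr)$, which absorbs $\exp\bigl(C\log(\epsilon^{-1})^{K}\bigr)$ once $C^\star>2R+K$. This is exactly where the hypothesis $t\leq\log(\epsilon^{-1})^{-C^\star}$ must be invoked; in your write-up it is only used to keep $\sqrt{v}R_1$ inside the validity radius of Assumption~\ref{assum:higherorder}-3, not to enlarge the tail decay. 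Your near-field argument is unaffected by enlarging $R_1$ this way (the moment $\int\|z\|^{2\beta}d\gamma_d(z)$ converges over all of $\mathbb{R}^d$, and $|g(y)-g(a_0)|$ remains bounded on $\|y-a_0\|\leq\delta$ for $R$ large), so with this single correction the proof closes.
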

The proof of Lemma \ref{lemma:boundsquared} can be found in Section \ref{sec:lemma:boundsquared}.
Using this result, the value of the score \eqref{eq:vghjxhzz} and the estimate \eqref{eq:premconcentpetit}, we can conclude the bound on $s(t,\cdot)$ in the case where the time $t$ is small and $\beta\leq 1$.
\begin{proposition}
 There exists $C^\star>0$ such that for all $t\in (0, \log(\epsilon^{-1})^{-C^\star})$ and $x\in A_0^\epsilon$ defined in \eqref{align:aepst}, we have
 $$\|s(t,x)+x\|\leq C\log(\epsilon^{-1})^{C_2}e^{-t} (1-e^{-2t})^{-\frac{1}{2}(1-(\beta\wedge 1))}.$$
\end{proposition}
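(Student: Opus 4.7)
The plan is simply to chain together the three ingredients that have already been laid out in this subsection. Starting from the explicit expression for $s(t,x)$ given in equation~\eqref{eq:vghjxhzz} (which is just Proposition~\ref{prop:values} evaluated at the relevant order), I would write
\begin{equation*}
 s(t,x)+x = \frac{e^{-t}}{1-e^{-2t}}\int (y - e^{-t}x)\,dp^{t,x}(y),
\end{equation*}
so the task reduces to bounding the norm of the integral on the right.

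Next, I would invoke the Cauchy--Schwarz bound derived in~\eqref{eq:premconcentpetit}, which uses the fact that $\int (y-e^{-t}x)\varphi^{t,x}(y)\,dy = 0$ in order to replace $r(y)$ by the centered quantity $r(y)-r(e^{-t}x)$ inside the integral. This yields
\begin{equation*}
 \Bigl\|\int (y-e^{-t}x)\,dp^{t,x}(y)\Bigr\|
 \leq (1-e^{-2t})^{1/2}\Bigl(\int \Bigl(\tfrac{r(y)-r(e^{-t}x)}{Q_t r(x)}\Bigr)^{2}\varphi^{t,x}(y)\,dy\Bigr)^{1/2}.
\end{equation*}

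Finally, I would apply Lemma~\ref{lemma:boundsquared}, which is precisely the place where the H\"older regularity of $a$ (with exponent $\beta\leq 1$) together with the smallness regime $t\in(0,\log(\epsilon^{-1})^{-C^\star})$ and the localization $x\in A_0^\epsilon$ is used to gain the extra factor $(1-e^{-2t})^{\beta\wedge 1}$. Combining everything and collecting the factors of $(1-e^{-2t})$,
\begin{equation*}
 \|s(t,x)+x\|
 \leq \frac{e^{-t}}{1-e^{-2t}} \cdot (1-e^{-2t})^{1/2} \cdot \bigl(C\log(\epsilon^{-1})^{C_2}\bigr)^{1/2}(1-e^{-2t})^{(\beta\wedge 1)/2},
\end{equation*}
which simplifies to the claimed bound $C\log(\epsilon^{-1})^{C_2}\,e^{-t}\,(1-e^{-2t})^{-\frac{1}{2}(1-(\beta\wedge 1))}$ after absorbing the square root of the logarithm into the constant $C_2$.

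There is essentially no hard step here: all the real work has already been done in Lemma~\ref{lemma:boundsquared}, whose proof extracts the $(1-e^{-2t})^{\beta\wedge 1}$ gain from the H\"older modulus of $r$ combined with Gaussian concentration. The only thing to be slightly careful about is checking that the constants and powers of $\log(\epsilon^{-1})$ line up with the advertised exponent $C_2$ (which by the convention stated in Section~\ref{sec:theo:mainlipreguofscore} is a generic constant that may change from line to line), and that the exponent $-\frac{1}{2}(1-(\beta\wedge 1))$ arises as $-\frac{1}{2}+\frac{\beta\wedge 1}{2}$ after combining the $(1-e^{-2t})^{-1}$ prefactor, the $(1-e^{-2t})^{1/2}$ from Cauchy--Schwarz, and the $(1-e^{-2t})^{(\beta\wedge 1)/2}$ from Lemma~\ref{lemma:boundsquared}.
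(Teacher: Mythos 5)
Your proposal is correct and follows exactly the paper's own argument: the identity from Proposition~\ref{prop:values} (equation~\eqref{eq:vghjxhzz}), the centering and Cauchy--Schwarz step~\eqref{eq:premconcentpetit}, and Lemma~\ref{lemma:boundsquared} to extract the $(1-e^{-2t})^{\beta\wedge 1}$ gain. The bookkeeping of the exponents of $(1-e^{-2t})$ and of the logarithmic factors is also as in the paper.
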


\subsubsection{Bound on $\nabla s(t,\cdot)$ for $\beta\leq 1$}

We now provide a bound on $\|\nabla s(t,\cdot)\|$ using a similar method than for $\| s(t,\cdot)\|$. From Proposition \ref{prop:values} we have
\begin{align}\label{eq:todifferentiatewhenbeta0}
    &\nabla s(t,x)+\text{Id}\nonumber\\
    =&\frac{e^{-2t}}{(1-e^{-2t})^2}\int H_p(y,x)^{\otimes 2}dp^{t,x}(y)-\frac{e^{-2t}}{1-e^{-2t}}\text{Id}\nonumber\\
    =&\frac{e^{-2t}}{(1-e^{-2t})^2}\int \left((y-e^{-t}x)^{\otimes 2}+(y-e^{-t}x)\otimes \left(e^{-t}x-\int z dp^{t,x}(z)\right)\right)dp^{t,x}(y)-\frac{e^{-2t}}{1-e^{-2t}}\text{Id}\nonumber\\
    =&\frac{e^{-2t}}{(1-e^{-2t})^2}\left(\int \left((y-e^{-t}x)^{\otimes 2}-(1-e^{-2t})\text{Id}\right)dp^{t,x}(y)-\left(\int (z-e^{-t}x) dp^{t,x}(z)\right)^{\otimes 2}\right).
\end{align}
The term
$\|\left(\int (z-e^{-t}x) dp^{t,x}(z)\right)^{\otimes 2}\|$ can be bounded 
using \eqref{eq:premconcentpetit} and Lemma \ref{lemma:boundsquared}. On the other hand, using Cauchy-Schwarz and Brascamp-Lieb inequalities we obtain
\begin{align*}
    \|&\frac{e^{-2t}}{(1-e^{-2t})^2}\int \left((y-e^{-t}x)^{\otimes 2}-(1-e^{-2t})\text{Id}\right)dp^{t,x}(y)\|\\
    =&\frac{e^{-2t}}{(1-e^{-2t})^2}\|\int\left( (y-e^{-t}x)^{\otimes 2}-(1-e^{-2t})\text{Id}\right)\frac{r(y)-r(e^{-t}x)}{Q_tr(x)}\varphi^{t,x}(y)dy\|\\
    \leq & \frac{Ce^{-2t}}{(1-e^{-2t})^2}\left(\int\| (y-e^{-t}x)^{\otimes 2}-(1-e^{-2t})\text{Id}\|^2\varphi^{t,x}(y)dy\right)^{1/2}\left(\int\left(\frac{r(y)-r(e^{-t}x)}{Q_tr(x)}\right)^2\varphi^{t,x}(y)dy\right)^{1/2}
    \\
    \leq & \frac{Ce^{-2t}}{(1-e^{-2t})^{3/2}}\left(\int\| y-e^{-t}x\|^2\varphi^{t,x}(y)dy\right)^{1/2}\left(\int\left(\frac{r(y)-r(e^{-t}x)}{Q_tr(x)}\right)^2\varphi^{t,x}(y)dy\right)^{1/2}
    \\
    \leq & \frac{Ce^{-2t}}{1-e^{-2t}}\left(\int\left(\frac{r(y)-r(e^{-t}x)}{Q_tr(x)}\right)^2\varphi^{t,x}(y)dy\right)^{1/2}.
\end{align*}
Finally, using again Lemma \ref{lemma:boundsquared} we can directly conclude the bound in the case $\beta\leq 1$.
\begin{proposition}
 There exists $C^\star>0$ such that for all $t\in (0, \log(\epsilon^{-1})^{-C^\star})$ and $x\in A_0^\epsilon$ defined in \eqref{align:aepst}, we have
$$\|\nabla s(t,x)+\text{Id}\|\leq C\log(\epsilon^{-1})^{C_2}e^{-2t}(1-e^{-2t})^{-(1-\frac{\beta\wedge 1}{2})}.$$
\end{proposition}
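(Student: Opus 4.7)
The plan is to carry on from the identity derived just above the statement, namely
$$\nabla s(t,x) + \text{Id} = \frac{e^{-2t}}{(1-e^{-2t})^2}\Bigl(\int\bigl((y - e^{-t}x)^{\otimes 2} - (1-e^{-2t})\text{Id}\bigr)\, dp^{t,x}(y) - \Bigl(\int(z - e^{-t}x)\, dp^{t,x}(z)\Bigr)^{\otimes 2}\Bigr),$$
and to estimate the two pieces in the parenthesis separately. The key algebraic observation is that shifting the covariance by $(1-e^{-2t})\text{Id}$ exactly absorbs the $\tfrac{e^{-2t}}{1-e^{-2t}}\text{Id}$ correction coming from Proposition \ref{prop:values}, and furthermore makes the integrand have vanishing Gaussian mean: $\int[(y-e^{-t}x)^{\otimes 2} - (1-e^{-2t})\text{Id}]\,\varphi^{t,x}(y)\,dy = 0$. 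This allows us to reweight the integral by $(r(y) - r(e^{-t}x))/Q_t r(x)$ instead of $r(y)/Q_t r(x)$, opening the door to Lemma \ref{lemma:boundsquared}.

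For the diagonal piece I would apply Cauchy--Schwarz to split the integral into a product of a Gaussian-moment factor and an $L^2$-discrepancy factor. A routine fourth-moment computation with the centred Gaussian gives the first factor bounded by $C(1 - e^{-2t})$, while Lemma \ref{lemma:boundsquared} bounds the second by $C\log(\epsilon^{-1})^{C_2/2}(1-e^{-2t})^{(\beta\wedge 1)/2}$. Combining with the singular prefactor $\tfrac{e^{-2t}}{(1-e^{-2t})^2}$ gives a contribution of order $\log(\epsilon^{-1})^{C_2/2}\,e^{-2t}(1-e^{-2t})^{-(1-(\beta\wedge 1)/2)}$, matching the target rate.

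For the rank-one piece I would directly reuse inequality \eqref{eq:premconcentpetit}, which, combined with Lemma \ref{lemma:boundsquared}, bounds $\|\int(z-e^{-t}x)\,dp^{t,x}(z)\|$ by $C\log(\epsilon^{-1})^{C_2/2}(1-e^{-2t})^{1/2 + (\beta\wedge 1)/2}$. Squaring and multiplying by $\tfrac{e^{-2t}}{(1-e^{-2t})^2}$ yields a contribution of order $\log(\epsilon^{-1})^{C_2}\,e^{-2t}(1-e^{-2t})^{-(1-(\beta\wedge 1))}$, no worse than the diagonal one. Adding the two contributions gives the stated bound, and the range $0<t \leq \log(\epsilon^{-1})^{-C^\star}$ together with $x \in A_0^\epsilon$ comes directly from the domain of validity of Lemma \ref{lemma:boundsquared} and Proposition \ref{prop:masstsmall}.

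The main subtle step is the Gaussian moment estimate in the Cauchy--Schwarz bound: one must verify that the centred operator-norm moment $\int\|(y - e^{-t}x)^{\otimes 2} - (1-e^{-2t})\text{Id}\|^2\,\varphi^{t,x}(y)\,dy$ scales like $(1-e^{-2t})^2$ rather than only $(1 - e^{-2t})$, since only this sharp scaling compensates the $(1-e^{-2t})^{-2}$ prefactor and produces the correct exponent in the final bound. This reduces to a standard chi-squared variance computation for the radial part $\|y-e^{-t}x\|^2/(1-e^{-2t})$. Beyond this check, the proof is a direct combination of algebraic manipulation, Cauchy--Schwarz, and the concentration estimate provided by Lemma \ref{lemma:boundsquared}, with no new ingredient required.
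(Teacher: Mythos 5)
Your proposal is correct and follows essentially the same route as the paper: the same decomposition of $\nabla s(t,x)+\mathrm{Id}$ from \eqref{eq:todifferentiatewhenbeta0}, the same reweighting of the centred second-moment term by $(r(y)-r(e^{-t}x))/Q_t r(x)$ (justified by the vanishing Gaussian mean), the same Cauchy--Schwarz split with a Gaussian fourth-moment factor of order $1-e^{-2t}$, Lemma \ref{lemma:boundsquared} for the discrepancy factor, and \eqref{eq:premconcentpetit} for the rank-one term. The exponent bookkeeping in both contributions matches the paper's, so no gap to report.
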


\subsubsection{Bound on $\nabla^j s(t,\cdot)$ for $j\in \mathbb{N}_{\geq 0}$ and $\beta\geq 1$}
Let us now give a bound on the norm of the higher differentials of $s(t,\cdot)$.
Taking $l\in \mathbb{N}_{>0}$, from the Faa di Bruno formula we have 
\begin{align}\label{eq:nablajlogQ}
    \nabla^l\log(Q_tr(x))&=\sum_{\pi \in \Pi_{l+1}}(-1)^{|\pi|+1}  \prod_{B\in \pi} \frac{\nabla^{|B|} Q_{t} r(x)}{Q_{t} r(x)}
\end{align}
and writing $\gamma_d$ for the standard $d$-dimensional Gaussian, we have  for $l\leq \lfloor \beta \rfloor$ that
\begin{align}\label{eq:deruptobeta}
     \frac{\nabla^l Q_{t} r(x)}{Q_{t} r(x)}=\frac{1}{Q_{t} r(x)}\int \nabla_x^l r(e^{-t}x+\sqrt{1-e^{-2t}}y)d\gamma_d(y)=e^{-tl}\int \frac{\nabla^lr(y)}{r(y)} dp^{t,x}(y)dy.
\end{align}

Let us start by giving a bound on the differentials of degree lower than $\lfloor \beta \rfloor$.

\begin{proposition}\label{prop:enfaiteccestcoro} There exists $C^\star>0$ such that if $t\leq \log(\epsilon^{-1})^{-C^\star}$ and $x\in A_t^\epsilon$ defined in \eqref{align:aepst}, then for all $i\in \{1,...,\lfloor \beta \rfloor\}$ we have
 $$\|\int \frac{\nabla^ir(y)}{r(y)} dp^{t,x}(y)dy\|\leq C\log(\epsilon^{-1})^{C_2}.$$
\end{proposition}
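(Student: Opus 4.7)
The plan is to apply the Faa di Bruno formula to $r = e^{\log r}$, which gives
\[
\frac{\nabla^i r(y)}{r(y)} = \sum_{\pi \in \Pi_i} \prod_{B \in \pi} \nabla^{|B|} \log r(y).
\]
Since $p = e^{-u+a}$ and $r = p/\gamma_d$, one has $\log r(y) = -u(y) + a(y) + \|y\|^2/2 + \mathrm{const}$, so each factor $\nabla^{|B|}\log r(y)$ splits into $-\nabla^{|B|} u(y) + \nabla^{|B|} a(y)$ plus a polynomial contribution coming from $\|y\|^2/2$ when $|B| \leq 2$. Assumption~\ref{assum:higherorder}-1 gives $\|\nabla^{|B|} a(y)\| \leq K$ on $supp(p)$ for every $|B| \leq \lfloor \beta \rfloor$, while Assumption~\ref{assum:higherorder}-3 (applied with its $x$ set to the running $y$) yields $\|\nabla^{|B|} u(y)\| \leq K(|u(y)|^K + 1)$. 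After applying Hölder's inequality within each partition product, the claim reduces to a moment bound of the form
\[
\int \bigl( |u(y)|^{K'} + \|y\|^{K'} + 1 \bigr)\, dp^{t,x}(y) \leq C \log(\epsilon^{-1})^{C_2}
\]
for some $K'$ depending only on $i$ and $K$.

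To establish this moment bound, the plan is to exploit the very strong log-concavity of $p^{t,x}$ in the small-$t$ regime. As shown in~\eqref{eq:ptxislogconc}, $-\nabla^2 \log p^{t,x} \succeq \bigl(\alpha + (1-e^{-2t})^{-1}\bigr)\text{Id}$, so for $t \leq \log(\epsilon^{-1})^{-C^\star}$ the measure $p^{t,x}$ is $\asymp (1-e^{-2t})^{-1}$-strongly log-concave and therefore concentrates at scale $\sqrt{1-e^{-2t}}$ around its mean, with sub-Gaussian tails via Lemma~\ref{lemma:extendedbl}. An argument analogous to~\eqref{eq:fhskkssyzvaidod} in the proof of Lemma~\ref{lemma:boundonratios} then shows that this mean is within $C \sqrt{1-e^{-2t}}\,\log(\epsilon^{-1})^{C_2}$ of $x$, and so, with $p^{t,x}$-probability at least $1 - O(\epsilon^{10})$, $\|y - x\| \leq \sqrt{1-e^{-2t}}\, \log(\epsilon^{-1})^{C_2}$.

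Since $x \in A_0^\epsilon$ implies $|u(x)| \leq K(\log(\epsilon^{-1}) + R + 1)^K$, taking $C^\star$ large enough makes this displacement bounded by $(K|u(x)| \vee 1)^{-K}$. Assumption~\ref{assum:higherorder}-3 applied at the anchor point $x$ then yields $\|\nabla^j u(y)\| \leq K(|u(x)|^K + 1) \leq C\log(\epsilon^{-1})^{C_2}$ on the high-probability event, and a first-order Taylor expansion of $u$ along the segment from $x$ to $y$ gives $|u(y)| \leq C\log(\epsilon^{-1})^{C_2}$ as well. The contribution from the complementary tail event is negligible because the sub-Gaussian tail of $p^{t,x}$ easily beats the polynomial-in-$|u|$ growth of $|\nabla^j u|$ and the at most quadratic-in-distance growth of $|u|$ coming from Assumption~\ref{assum:1}-1.

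The main obstacle is proving that the mean of $p^{t,x}$ really does stay within $O(\sqrt{1-e^{-2t}}\,\log(\epsilon^{-1})^{C_2})$ of $x$: the measure $p^{t,x}$ is not Gaussian and is proportional to $r(y)\varphi^{t,x}(y)$, with the Hölder-regular factor $r$ potentially shifting its center. The plan is to quantify that, on the small scale $\sqrt{1-e^{-2t}}$ relevant here and for $x \in A_0^\epsilon$, the variation of $\log r$ is at most polylog while the Gaussian potential $-\|y - e^{-t}x\|^2/(2(1-e^{-2t}))$ dominates, so that the tilt induced by $r$ shifts the mean by at most $O(\sqrt{1-e^{-2t}}\,\log(\epsilon^{-1})^{C_2})$. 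This is already the mechanism underlying Proposition~\ref{prop:masstsmall}, and the same computation can be reused here.
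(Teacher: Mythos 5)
Your overall strategy (expand $\nabla^i r/r$ via Faa di Bruno into derivatives of $-u+a+\|\cdot\|^2/2$, bound each factor pointwise by Assumptions~\ref{assum:higherorder}-1 and~\ref{assum:higherorder}-3, and reduce to a polylog moment bound for $|u(y)|^{K'}+\|y\|^{K'}$ under $p^{t,x}$) is viable, but the step you use to get that moment bound contains a genuine error: you assert that $-\nabla^2\log p^{t,x}\succeq(\alpha+(1-e^{-2t})^{-1})\mathrm{Id}$ "as shown in~\eqref{eq:ptxislogconc}". Equation~\eqref{eq:ptxislogconc} is a statement about $\nu^{t,x}$, which is built from the log-concave surrogate $q=re^{-a}$, not about $p^{t,x}$. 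The measure $p^{t,x}$ is proportional to $r\varphi^{t,x}$ with $r=e^{-u+a+\|\cdot\|^2/2}$ and $a$ only $\beta$-Hölder; in the nontrivial range $\beta\in[1,2)$ the Hessian $\nabla^2\log p^{t,x}$ does not even exist, so $p^{t,x}$ is not strongly log-concave and Lemma~\ref{lemma:extendedbl} cannot be applied to it. Since both your concentration of $p^{t,x}$ at scale $\sqrt{1-e^{-2t}}$ and your "sub-Gaussian tails beat polynomial growth" argument rest on this, the proof as written has a hole. The needed facts are nevertheless true, but they must be obtained the way the paper does it: the lower bound on $Q_tr(x)$ and the density-ratio bound $r(y)/Q_tr(x)\leq C$ near $e^{-t}x$ from Proposition~\ref{prop:enfaitelemma}, combined with the direct Gaussian-tail computation of Lemma~\ref{lemma:concetrationaroudx}, give $\int\|y-e^{-t}x\|^{\eta}\,dp^{t,x}(y)\leq C_\eta\log(\epsilon^{-1})^{C_2}(1-e^{-2t})^{\eta/2}$ without any log-concavity of $p^{t,x}$. (Your pointer to Proposition~\ref{prop:masstsmall} for the "tilt of the mean" is also not the relevant result; it controls the global mass $p_t(A_0^\epsilon)$, not the conditional measures $p^{t,x}$.)

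For comparison, the paper's proof is a two-line centering argument: it writes $\int\frac{\nabla^i r(y)}{r(y)}dp^{t,x}(y)$ as $\frac{\nabla^i r(e^{-t}x)}{r(e^{-t}x)}$ plus $\int\big(\frac{\nabla^i r(y)}{r(y)}-\frac{\nabla^i r(e^{-t}x)}{r(e^{-t}x)}\big)dp^{t,x}(y)$, bounds the centered term by Proposition~\ref{prop:shouldthelastone} (which already packages the concentration of $p^{t,x}$ around $e^{-t}x$ and the local Hölder control of $\nabla^i r/r$), and bounds the anchor value at $e^{-t}x$ by Assumption~\ref{assum:higherorder}-3 since $x\in A_0^\epsilon$ forces $|u(e^{-t}x)|$, hence all $\|\nabla^j u(e^{-t}x)\|$, to be polylogarithmic. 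If you replace your log-concavity step by Lemma~\ref{lemma:concetrationaroudx} and Proposition~\ref{prop:enfaitelemma}, your route becomes correct, but it essentially re-derives what Proposition~\ref{prop:shouldthelastone} already provides.
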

The proof of Proposition \ref{prop:enfaiteccestcoro} can be found in Section \ref{sec:prop:enfaiteccestcoro}. In particular, using \eqref{eq:nablajlogQ} and \eqref{eq:deruptobeta}, Proposition \ref{prop:enfaiteccestcoro} allows us to conclude the bound on $\|\nabla^{j} s(t,\cdot)\|$ for $j\in \{0,...,\lfloor \beta \rfloor-1\}$ in the case $\beta\geq 1$.
\begin{proposition}
 There exists $C^\star>0$ such that for all $t\in (0, \log(\epsilon^{-1})^{-C^\star})$, $x \in A_0^\epsilon$ defined in \eqref{align:aepst} and $j\in \{0,...,\lfloor \beta \rfloor-1\}$, we have
\begin{equation}
   \|\nabla^{j}\big( s(t,\cdot)+\text{Id}\big)(x)\|\leq  C\log(\epsilon^{-1})^{C_2}e^{-t(j+1)}.
\end{equation}
\end{proposition}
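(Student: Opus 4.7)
The plan is to combine the Faà di Bruno identity already recorded in \eqref{eq:nablajlogQ} with the key concentration estimate of Proposition~\ref{prop:enfaiteccestcoro}. Since $s(t,x)+x = \nabla \log Q_t r(x)$, taking $j$ spatial derivatives gives $\nabla^{j}(s(t,\cdot)+\mathrm{Id})(x) = \nabla^{j+1}\log Q_t r(x)$, which by Faà di Bruno (in the form used to derive \eqref{align:gradvtpetit}) can be expanded as
\begin{equation*}
\nabla^{j}\bigl(s(t,\cdot)+\mathrm{Id}\bigr)(x)
= \sum_{\pi \in \Pi_{j+1}} (-1)^{|\pi|+1} \prod_{B\in\pi} \frac{\nabla^{|B|} Q_t r(x)}{Q_t r(x)}.
\end{equation*}

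Since $j \leq \lfloor \beta \rfloor - 1$, every block $B$ of any partition $\pi \in \Pi_{j+1}$ satisfies $|B| \leq j+1 \leq \lfloor \beta \rfloor$, so the identity \eqref{eq:deruptobeta} applies to each factor and yields
\begin{equation*}
\frac{\nabla^{|B|} Q_t r(x)}{Q_t r(x)} = e^{-t |B|} \int \frac{\nabla^{|B|} r(y)}{r(y)} \, dp^{t,x}(y).
\end{equation*}
Proposition~\ref{prop:enfaiteccestcoro} controls each of these integrals by $C\log(\epsilon^{-1})^{C_2}$ whenever $t \leq \log(\epsilon^{-1})^{-C^\star}$ and $x \in A_0^\epsilon$. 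The exponential prefactors then combine to give $\prod_{B\in\pi} e^{-t|B|} = e^{-t(j+1)}$, since the blocks of any partition of $\{1,\dots,j+1\}$ have sizes summing to $j+1$.

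Putting these together, each summand in the Faà di Bruno expansion is bounded in operator norm by $C \log(\epsilon^{-1})^{C_2 |\pi|} e^{-t(j+1)}$, with $|\pi| \leq j+1$. Since the number of partitions of $\{1,\dots,j+1\}$ is the Bell number $B_{j+1}$, a constant depending only on $j$, we absorb it into the generic constant $C$ (together with the $j$-dependence of the exponent of the logarithmic factor) and conclude
\begin{equation*}
\|\nabla^{j}\bigl(s(t,\cdot)+\mathrm{Id}\bigr)(x)\| \leq C \log(\epsilon^{-1})^{C_2} e^{-t(j+1)},
\end{equation*}
which is the desired estimate. There is no real obstacle here: all the hard analytic work has been absorbed into Proposition~\ref{prop:enfaiteccestcoro}, and the remainder is essentially a combinatorial bookkeeping step verifying that the exponent on $e^{-t}$ adds up correctly to $j+1$ thanks to the block-size sum constraint.
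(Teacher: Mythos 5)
Your argument is correct and is precisely the route the paper takes: it combines the Faà di Bruno expansion \eqref{eq:nablajlogQ}, the identity \eqref{eq:deruptobeta} (valid since every block has size $|B|\leq j+1\leq \lfloor\beta\rfloor$), and Proposition~\ref{prop:enfaiteccestcoro}, with the block sizes summing to $j+1$ to produce the $e^{-t(j+1)}$ factor. The paper leaves exactly this bookkeeping implicit, so your write-up simply makes its one-line deduction explicit.
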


Now, in order to compute the differentials of \eqref{eq:deruptobeta}, let us define for $m\in \mathbb{N}_{>0}$ and $y\in \mathbb{R}^d$,
\begin{equation}\label{eq:Yi}
Y_1:=\frac{y-\int zdp^{t,x}(z)}{1-e^{-2t}}\quad  \text{and} \quad Y_{m+1}:=\frac{\int \left(z-\int w dp^{t,x}(w)\right)^{\otimes (m+1)}dp^{t,x}(z)}{(1-e^{-2t})^{m+1}}.
\end{equation}

Using these quantities, let us derive the differentials of \eqref{eq:deruptobeta}.
\begin{proposition}\label{prop:thepoly}
For all $j\in \mathbb{N}_{>0}$ and $l\in \{1,...,\lfloor \beta \rfloor\}$, we have 
\begin{align}\label{align:nablajqsurq}
    \nabla^{j}\left(z\mapsto \frac{\nabla^{l} Q_{t} r(z)}{Q_{t} r(z)}\right)(x)=e^{-t(j+l)}\int \frac{\nabla^{l}r(y)}{r(y)}\otimes\left( P^{j}(y,x)-\int P^{j}(z,x)dp^{t,x}(z) \right)dp^{t,x}(y),
\end{align}
with $P^{j}(y)$ a polynomial of the form
\begin{align*}
    P^j(y,x)=\sum_{l=1}^m\sum_{\substack{i_1,...,i_l\in \{1,...,j\}\\i_1+...+i_l=j}}a_{i_1,...,i_l} \bigotimes_{n=1}^lY_{i_n}
\end{align*}
and $a_{i_1,...,i_j}\in \mathbb{R}$.
\end{proposition}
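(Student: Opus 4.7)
The plan is to prove Proposition \ref{prop:thepoly} by induction on $j$, starting from the representation~\eqref{eq:deruptobeta}, i.e.\ $\frac{\nabla^l Q_t r(x)}{Q_t r(x)} = e^{-tl} \int \frac{\nabla^l r(y)}{r(y)} dp^{t,x}(y)$, and differentiating under the integral using the logarithmic derivative of the measure $p^{t,x}$ with respect to $x$.

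The first step is the key identity
\begin{equation*}
\nabla_x \log p^{t,x}(y) \;=\; \nabla_x\log \varphi^{t,x}(y) - \nabla_x \log Q_t r(x) \;=\; e^{-t}\, Y_1(y),
\end{equation*}
which follows from $\nabla_x \log \varphi^{t,x}(y) = e^{-t}(y-e^{-t}x)/(1-e^{-2t})$ together with the identity $\nabla_x \log Q_t r(x) = e^{-t}(\bar z(x)-e^{-t}x)/(1-e^{-2t})$, where $\bar z(x) := \int z\,dp^{t,x}(z)$. This yields the basic formula: for any function $\Phi(y,x)$,
\begin{equation*}
\nabla_x \int \Phi(y,x)\,dp^{t,x}(y) \;=\; \int \nabla_x \Phi(y,x)\,dp^{t,x}(y) \;+\; e^{-t}\int \Phi(y,x)\otimes Y_1(y)\,dp^{t,x}(y).
\end{equation*}
Applied to $\Phi(y) = \tfrac{\nabla^l r(y)}{r(y)}$ (independent of $x$) and using $\int Y_1\,dp^{t,x} = 0$, this immediately gives the case $j=1$ with $P^1(y,x) = Y_1(y)$.

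For the inductive step, assuming the formula~\eqref{align:nablajqsurq} at level $j$ with polynomial $P^j$, I would apply the above identity to $\Phi(y,x) = \tfrac{\nabla^l r(y)}{r(y)}\otimes (P^j(y,x) - \bar P^j(x))$, with $\bar P^j(x) := \int P^j(z,x)\,dp^{t,x}(z)$. After using that $\nabla_x \bar P^j(x) = \int \nabla_x P^j(z,x)\,dp^{t,x}(z) + e^{-t}\int P^j(z,x)\otimes Y_1(z)\,dp^{t,x}(z)$ and simplifying, the result takes exactly the claimed form with the recursion
\begin{equation*}
P^{j+1}(y,x) \;:=\; e^{t}\,\nabla_x P^j(y,x) \;+\; \bigl(P^j(y,x) - \bar P^j(x)\bigr)\otimes Y_1(y),
\end{equation*}
and one checks directly that $P^{j+1}(y,x) - \bar P^{j+1}(x)$ matches the expression produced by the two integrals, so the formula propagates.

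It remains to verify that $P^{j+1}$ has the claimed monomial structure with total index $j+1$. For this one needs the explicit expressions for the derivatives of the $Y_m$'s: a direct computation of $\nabla_x M_m$ with $M_m = (1-e^{-2t})^m Y_m$ gives
\begin{equation*}
\nabla_x Y_1 = -e^{-t}\,Y_2, \qquad \nabla_x Y_m = e^{-t}\, Y_{m+1} - m\,e^{-t}\,\mathrm{Sym}(Y_{m-1}\otimes Y_2)\quad (m\ge 2),
\end{equation*}
so that $e^t\nabla_x$ applied to any monomial $\bigotimes_n Y_{i_n}$ of total index $j$ produces, by Leibniz, a sum of monomials of total index $j+1$. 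The product $(P^j - \bar P^j)\otimes Y_1$ has the same total index $j+1$, noting that $\bar P^j(x)$ is itself a polynomial in the $Y_m$'s ($m\ge 2$) of total index $j$, obtained by replacing any tensor slot filled with $Y_1(z)$ by moments of the form $\int Y_1^{\otimes k}\,dp^{t,x}(z) = (1-e^{-2t})^{-k}M_k$, which for $k\ge 2$ are polynomials in the $Y_i$'s of total index $k$. The main nuisance — rather than a genuine obstacle — is the tensor-index bookkeeping to show that these substitutions preserve the total-index grading; this is resolved by tracking, at each Leibniz step and each integration, that every replacement rule (differentiation of $Y_m$, multiplication by $Y_1$, integration of $Y_1$-factors) is graded of degree $+1$, $+1$, and $0$ respectively, which yields the invariant $\sum_n i_n = j$ throughout.
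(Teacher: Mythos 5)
Your proof is correct and follows essentially the same route as the paper: induction on $j$ via differentiation under the integral using $\nabla_x \log p^{t,x}(y)=e^{-t}Y_1(y)$, leading to the identical recursion $P^{j+1}=e^{t}\nabla_x P^{j}+\bigl(P^{j}-\int P^{j}\,dp^{t,x}\bigr)\otimes Y_1$. Your explicit verification of the grading (via the formulas for $\nabla_x Y_m$ and the moment substitutions $\int Y_1^{\otimes k}dp^{t,x}=Y_k$) is a welcome addition that the paper leaves implicit.
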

The proof of Proposition \ref{prop:thepoly} can be found in Section \ref{sec:prop:thepoly}. We have from \eqref{eq:nablajlogQ} that for $j\geq \lfloor \beta \rfloor\vee 1$,
\begin{align}\label{eq:nablajscoregene}
    \nabla^js(t,x)&=\nabla^{j+1-\lfloor \beta \rfloor}\left(z\mapsto \sum_{\pi \in \Pi_{\lfloor \beta \rfloor}}(-1)^{|\pi|+1}  \prod_{B\in \pi} \frac{\nabla^{|B|} Q_{t} r(z)}{Q_{t} r(z)}\right)(x)\nonumber\\
    & = \sum_{\pi \in \Pi_{\lfloor \beta \rfloor}}(-1)^{|\pi|+1}  \sum_{\substack{i_{\pi_1},...,i_{\pi_{|B|}}\\i_{\pi_1}+...+i_{\pi_{|B|}}=j+1-\lfloor \beta \rfloor}} \prod_{B\in \pi}\nabla^{i_B}\left(z\mapsto \frac{\nabla^{|B|} Q_{t} r(z)}{Q_{t} r(z)}\right)(x).
\end{align}
Then, using Proposition \ref{prop:thepoly} we get
\begin{align*}
    &\|\nabla^{i_B}\left(z\mapsto \frac{\nabla^{|B|} Q_{t} r(z)}{Q_{t} r(z)}\right)(x)\|\\
    &\leq e^{-t(i_B+|B|)}\|\int \left(\frac{\nabla^{|B|}r(y)}{r(y)}-\int \frac{\nabla^{|B|}r(z)}{r(z)}dp^{t,x}(z)\right)\otimes\left( P^{i_B}(y,x)-\int P^{i_B}(z,x)dp^{t,x}(z) \right)dp^{t,x}(y)\|
\end{align*}
which gives using Cauchy Schwarz inequality
\begin{align}\label{eq:Chmoche}
    \|&\nabla^{i_B}\left(z\mapsto \frac{\nabla^{|B|} Q_{t} r(z)}{Q_{t} r(z)}\right)(x)\|\nonumber\\
    \leq & e^{-t(i_B+|B|)}\left(\int \|\frac{\nabla^{|B|}r(y)}{r(y)}-\int \frac{\nabla^{|B|}r(z)}{r(z)}dp^{t,x}(z)\|^2dp^{t,x}(y)\right)^{1/2}\nonumber\\
    & \times \left( \int\|P^{i_B}(y,x)-\int P^{i_B}(z,x)dp^{t,x}(z) \|^2dp^{t,x}(y)\right)^{1/2}.
\end{align}

In order to bound the second term of \eqref{eq:Chmoche}, let us first quantify the concentration of the mass of $p^{t,x}$ around the point $e^{-t}x$.

\begin{lemma}\label{lemma:concetrationaroudx}
There exists $C^\star>0$ such that, for all $\eta>0$, $t\in (0, \log(\epsilon^{-1})^{C^\star})$ and $A_0^\epsilon$ defined in \eqref{align:aepst}, we have
\begin{align*}
    \int&\|y-e^{-t}x\|^{\eta}dp^{t,x}(y)\leq C_\eta\log(\epsilon^{-1})^{C_2}(1-e^{-2t})^{\eta/2}
\end{align*}
\end{lemma}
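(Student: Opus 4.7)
The plan is to reduce the moment bound to a computation for the log-concave comparison measure $\nu^{t,x}$ from \eqref{eq:nutx}, and then control that moment using strong log-concavity together with a location estimate for the mode. Since Assumption \ref{assum:higherorder}-1 provides $a \in \mathcal{H}^\beta_K(\mathrm{supp}(p))$ and hence $\|a\|_\infty \leq K$, the pointwise estimate $e^{-K} q(y) \leq r(y) \leq e^K q(y)$ yields $dp^{t,x}/d\nu^{t,x} \leq e^{2K}$, so it suffices to bound $\int \|y - e^{-t}x\|^\eta\, d\nu^{t,x}(y)$.

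For this moment, recall from \eqref{eq:ptxislogconc} that $\nu^{t,x}$ is $\theta$-strongly log-concave with $\theta = \alpha + \frac{e^{-2t}}{1-e^{-2t}} \geq c (1-e^{-2t})^{-1}$. Applying Lemma \ref{lemma:extendedbl} coordinatewise (to the linear functions $y \mapsto \langle h, y \rangle$, $h \in \mathbb{S}^{d-1}$), one obtains $\int \|y - y^\star\|^\eta\, d\nu^{t,x}(y) \leq C_\eta \theta^{-\eta/2} \leq C_\eta (1-e^{-2t})^{\eta/2}$, where $y^\star$ denotes the mode of $\nu^{t,x}$; the mode-mean bound $\|y^\star - m\| \leq C/\sqrt\theta$ for strongly log-concave measures allows passage from the usual Brascamp--Lieb estimate around the mean.

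The delicate step is to bound the shift $\|y^\star - e^{-t}x\|$. First-order optimality at $y^\star$ reads $\nabla u(y^\star) = \frac{e^{-t}}{1-e^{-2t}}(x - e^{-t}y^\star)$, equivalently $y^\star = e^t x - (e^{2t}-1)\nabla u(y^\star)$, giving $\|y^\star - e^{-t}x\| \leq Ct(\|x\| + \|\nabla u(y^\star)\|)$. For $x \in A_0^\epsilon$, the defining bound $u(x) \leq K(\log(\epsilon^{-1}) + R + 1)^K$ combined with $\nabla^2 u \succeq \alpha\, \mathrm{Id}$ and $\|\argmin u\| \leq K$ (Assumption \ref{assum:higherorder}-2) yields $\|x\| \leq C\log(\epsilon^{-1})^{C_2}$. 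A bootstrap using Assumption \ref{assum:higherorder}-3 then controls $\|\nabla u(y^\star)\|$: for $t \leq \log(\epsilon^{-1})^{-C^\star}$ with $C^\star$ chosen large enough, the shift $\|y^\star - x\|$ remains inside the neighborhood $(K|u(x)|\vee 1)^{-K}$ where the assumption applies, so $\|\nabla u(y^\star)\| \leq K(|u(x)|^K + 1) \leq C\log(\epsilon^{-1})^{C_2}$. Combining and using $t \leq C(1-e^{-2t})$ for small $t$, one obtains $\|y^\star - e^{-t}x\| \leq C\log(\epsilon^{-1})^{C_2}(1-e^{-2t})^{1/2}$ once $C^\star$ is large enough to absorb the remaining $\log$ power via $(1-e^{-2t})^{1/2} \leq \log(\epsilon^{-1})^{-C^\star/2}$.

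Plugging back into the triangle inequality $\|y - e^{-t}x\|^\eta \leq C_\eta(\|y - y^\star\|^\eta + \|y^\star - e^{-t}x\|^\eta)$ and integrating against $\nu^{t,x}$ produces the claimed bound. The main obstacle is the bootstrap: Assumption \ref{assum:higherorder}-3 is a \emph{conditional} gradient bound, valid only within a $|u(x)|$-dependent ball around $x$, so $C^\star$ must be carefully calibrated against both the growth of $|u(x)|^K$ on $A_0^\epsilon$ and the crude estimate on $\|\nabla u(y^\star)\|$ that the bootstrap produces, in order for the shrinking neighborhood radius $(K|u(x)|\vee 1)^{-K}$ to remain consistent with $\|y^\star - x\| \leq Ct \log(\epsilon^{-1})^{C_2}$.
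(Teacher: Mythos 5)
Your proposal takes a genuinely different route from the paper. The paper's proof is a direct Gaussian computation: it passes to polar coordinates around $e^{-t}x$, rescales the radius by $(1-e^{-2t})^{1/2}$ to extract the factor $(1-e^{-2t})^{\eta/2}$, and splits into a near region where the ratio $r(\cdot)/Q_tr(x)$ is bounded (Proposition \ref{prop:enfaitelemma} plus Assumption \ref{assum:higherorder}-3) and a far region where the Gaussian tail dominates the crude lower bound on $Q_tr(x)$. You instead compare $p^{t,x}$ to the log-concave measure $\nu^{t,x}$ (the density-ratio bound $e^{2K}$ is correct, since $\|a\|_{L^\infty(\mathrm{supp}(p))}\leq K$ gives $e^{a(y)}Q_tq(x)/Q_tr(x)\leq e^{2K}$), and then invoke the $(\alpha+\tfrac{e^{-2t}}{1-e^{-2t}})$-strong log-concavity of $\nu^{t,x}$ together with Lemma \ref{lemma:extendedbl} to get the centered moment of order $(1-e^{-2t})^{\eta/2}$. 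This is a legitimate and arguably more conceptual alternative, reusing the machinery already built for Theorem \ref{theo:mainlipreguofscore}; its price is that the moment produced by Brascamp--Lieb is centered at the mean/mode of $\nu^{t,x}$ rather than at $e^{-t}x$, so you must additionally locate the mode.

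That relocation step is where your argument is incomplete. You correctly identify it as the obstacle but leave the bootstrap on $\|\nabla u(y^\star)\|$ unexecuted: Assumption \ref{assum:higherorder}-3 only controls $\nabla u$ in a $|u(x)|$-dependent neighborhood, and you need to know $y^\star$ lies in that neighborhood before you may apply the assumption at $y^\star$ --- which is exactly what you are trying to prove. Your stated first-order condition is also slightly off: with $-\log\nu^{t,x}(y)=u(y)-\|y\|^2/2+\tfrac{\|y-e^{-t}x\|^2}{2(1-e^{-2t})}+\mathrm{const}$ one gets $y^\star-e^{-t}x=(1-e^{-2t})\bigl(y^\star-\nabla u(y^\star)\bigr)$, not the identity you wrote (though the qualitative consequence survives). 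The circularity can be removed entirely: writing $W=-\log\nu^{t,x}$, strong convexity gives $\|y^\star-e^{-t}x\|\leq \theta^{-1}\|\nabla W(e^{-t}x)\|$ with $\theta\geq c(1-e^{-2t})^{-1}$, and $\nabla W(e^{-t}x)=\nabla u(e^{-t}x)-e^{-t}x$ involves only the gradient at $e^{-t}x$, which Assumption \ref{assum:higherorder}-3 controls directly from the base point $x\in A_0^\epsilon$ (since $\|e^{-t}x-x\|\leq t\|x\|$ is far smaller than $(K|u(x)|\vee 1)^{-K}$ once $C^\star$ is large). This yields $\|y^\star-e^{-t}x\|\leq C(1-e^{-2t})\log(\epsilon^{-1})^{C_2}\leq C(1-e^{-2t})^{1/2}$ for $C^\star$ large, after which your triangle-inequality conclusion goes through. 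With that replacement the proof is complete; as written, the key quantitative step is asserted rather than proved.
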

The proof of Lemma \ref{lemma:concetrationaroudx} can be found in Section \ref{sec:lemma:concetrationaroudx}. Using Lemma \ref{lemma:concetrationaroudx}, we have for $m\geq 2$ that
\begin{align*}
    \|Y_m\| &=\|\frac{\int \left(z-\int w dp^{t,x}(w)\right)^{\otimes m}dp^{t,x}(z)}{(1-e^{-2t})^m}\|\leq C_\eta\log(\epsilon^{-1})^{C_2}(1-e^{-2t})^{-m/2},
\end{align*}
so as for $L\in \mathbb{N}_{>0}$ we have
\begin{align*}
    &\int\|P^{L}(y,x)-\int P^{L}(z,x)dp^{t,x}(z) \|^2dp^{t,x}(y)\\
    &\leq C \sum_{l=1}^m\sum_{\substack{i_1,...,i_l\in \{1,...,L\}\\i_1+...+i_l=L}}|a_{i_1,...,i_l}|\int \| \bigotimes_{n=1}^lY_{i_n}(y)-\int \bigotimes_{n=1}^lY_{i_n}(z)dp^{t,x}(z)\|^2dp^{t,x}(y),
\end{align*}
we deduce applying the Brascamp-Lieb inequality and Lemma \ref{lemma:extendedbl} that
\begin{align*}
    &\int \| \bigotimes_{n=1}^lY_{i_n}(y)-\int \bigotimes_{n=1}^lY_{i_n}(z)dp^{t,x}(z)\|^2dp^{t,x}(y)\\
    &\leq \|\bigotimes_{\substack{n=1\\i_n\geq 2}}^lY_{i_n}(y)\|^2\int \| \bigotimes_{\substack{n=1\\i_n=1}}^lY_{i_n}(y)-\int \bigotimes_{\substack{n=1\\i_n=1}}^lY_{i_n}(z)dp^{t,x}(z)\|^2dp^{t,x}(y)\\
    &\leq C_L\log(\epsilon^{-1})^{C_2} (1-e^{-2t})^{-\sum_{k=1}^l i_l\mathds{1}_{i_l\geq 2}}\\
    & \times \int \| \left(Y_{1}(y)\right)^{-\otimes\sum_{k=1}^l i_l\mathds{1}_{i_l=1}}-\int \left(Y_{1}(z)\right)^{-\otimes\sum_{k=1}^l i_l\mathds{1}_{i_l=1}}(z)dp^{t,x}(z)\|^2dp^{t,x}(y)\\
    &\leq C_L\log(\epsilon^{-1})^{C_2}(1-e^{-2t})^{-\sum_{k=1}^l i_l}
    \\
    &\leq C_L\log(\epsilon^{-1})^{C_2}(1-e^{-2t})^{-L}.
\end{align*}

Now, from \eqref{eq:Chmoche} we deduce that
\begin{align*}
\|&\nabla^{i_B}\left(z\mapsto \frac{\nabla^{|B|} Q_{t} r(z)}{Q_{t} r(z)}\right)(x)\|\nonumber\\
    \leq & C_L\log(\epsilon^{-1})^{C_2}(1-e^{-2t})^{-i_B/2}e^{-t(i_B+|B|)}\left(\int \|\frac{\nabla^{|B|}r(y)}{r(y)}-\int \frac{\nabla^{|B|}r(z)}{r(z)}dp^{t,x}(z)\|^2dp^{t,x}(y)\right)^{1/2}.
\end{align*}
Let us now give an estimate on this last term.

\begin{proposition}\label{prop:shouldthelastone}
There exists $C^\star>0$ such that if $t\in (0, \log(\epsilon^{-1})^{-C^\star})$ and $x\in A_t^\epsilon$, then for all $i\in \{1,...,\lfloor \beta \rfloor\}$ we have
    $$\int \|\frac{\nabla^i r}{r}(y)-\frac{\nabla^i r}{r}(e^{-t}x)\|^2dp^{t,x}(y)\leq C\log(\epsilon^{-1})^{C_2(1+i)}(1-e^{-2t})^{\beta-\lfloor \beta \rfloor }.$$
\end{proposition}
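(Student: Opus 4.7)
The plan is to reduce the estimate to the Hölder regularity of the $\lfloor\beta\rfloor$-th derivative of $a$, propagated through the chain rule $r = e^{\log r}$ and then integrated against $p^{t,x}$. Set $v := \log r = -u + a + \|\cdot\|^2/2 + \text{const}$. Faà di Bruno applied to $e^v$ gives
\[
\frac{\nabla^i r}{r}(y) = \sum_{\pi \in \Pi_i} \prod_{B \in \pi} \nabla^{|B|} v(y),
\]
a polynomial in $\nabla^k v$ for $1 \leq k \leq i$. Using the telescoping identity $\prod_k f_k - \prod_k g_k = \sum_k (\prod_{j<k} g_j)(f_k - g_k)(\prod_{j>k} f_j)$, I would rewrite the difference $\frac{\nabla^i r}{r}(y) - \frac{\nabla^i r}{r}(e^{-t}x)$ as a sum of terms, each carrying exactly one Hölder increment $\nabla^k v(y) - \nabla^k v(e^{-t}x)$ multiplied by products of $\nabla^{k'} v$ evaluated at $y$ or $e^{-t}x$.

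Next I would control each such increment. For $k \leq i \leq \lfloor\beta\rfloor$ I have $\nabla^k v = -\nabla^k u + \nabla^k a$ (plus identity or linear corrections at $k \in \{1,2\}$). Assumption \ref{assum:higherorder}-1 yields $(\beta-\lfloor\beta\rfloor)$-Hölder control of $\nabla^{\lfloor\beta\rfloor} a$ with constant $K$, and Assumption \ref{assum:higherorder}-3 with index $\lfloor\beta\rfloor+1$ gives a local Lipschitz bound on $\nabla^k u$ that is polynomial in $|u|$. Combining these, I expect a pointwise estimate of the form
\[
\Big\|\tfrac{\nabla^i r}{r}(y) - \tfrac{\nabla^i r}{r}(e^{-t}x)\Big\| \leq C\bigl(1 + |u(y)|^{C_2} + |u(e^{-t}x)|^{C_2}\bigr)\,\|y - e^{-t}x\|^{\beta - \lfloor\beta\rfloor}
\]
on the good event $\{\|y-e^{-t}x\| \leq 1\}$, with the polynomial pre-factors absorbing the uniform bounds on the remaining $\nabla^{k'} v$ terms from the telescope. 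The exponent $\beta - \lfloor\beta\rfloor$ is the worst among all blocks (lower-order blocks produce stronger Lipschitz increments that are cheaper than the Hölder one once $\|y-e^{-t}x\|$ is small).

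Then I would square, apply Cauchy--Schwarz, and integrate against $p^{t,x}$. Splitting the domain into $\{u(y) \leq M\}$ with $M := C\log(\epsilon^{-1})^{C_2}$ and its complement, the bulk term is dominated by
\[
M^{C_2} \int \|y - e^{-t}x\|^{2(\beta - \lfloor\beta\rfloor)}\,dp^{t,x}(y) \leq C\log(\epsilon^{-1})^{C_2(1+i)}(1-e^{-2t})^{\beta - \lfloor\beta\rfloor}
\]
by Lemma \ref{lemma:concetrationaroudx}. Since $x \in A_0^\epsilon$, the term $|u(e^{-t}x)|$ is itself polylogarithmic in $\epsilon^{-1}$ by the definition \eqref{align:aepst} and Assumption \ref{assum:higherorder}-3. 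The tail $\{u(y) > M\}$ has super-polynomially small $p^{t,x}$-mass: this follows from the $\alpha$-convexity of $u$ (Assumption \ref{assum:1}-1) combined with the Gaussian envelope of $\varphi^{t,x}$, by the same localization argument used in the proof of Proposition \ref{prop:masstsmall}. A final Cauchy--Schwarz absorbs the polynomial $|u(y)|^{C_2}$ against the exponential tail.

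The main obstacle will be the book-keeping: propagating the polynomial factors $|u|^{C_2}$ coming from Assumption \ref{assum:higherorder}-3 through the Faà di Bruno/telescoping expansion while keeping the final logarithmic exponent at $C_2(1+i)$, and verifying uniformly in $x \in A_0^\epsilon$ and $t \in (0,\log(\epsilon^{-1})^{-C^\star})$ that the tail of $p^{t,x}$ on $\{u > M\}$ decays fast enough to kill these polynomial losses. Everything else is a routine adaptation of the concentration estimate already established for $y \mapsto \|y - e^{-t}x\|$.
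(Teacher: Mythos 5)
Your plan is essentially the paper's proof: a pointwise $(\beta-\lfloor\beta\rfloor)$-H\"older estimate for $\nabla^i r/r$ extracted from $a\in\mathcal{H}^\beta_K$ and the derivative bounds on $u$ in Assumption \ref{assum:higherorder}-3, followed by a bulk/tail split in which the bulk moment $\int\|y-e^{-t}x\|^{2(\beta-\lfloor\beta\rfloor)}\,dp^{t,x}(y)$ is handled by Lemma \ref{lemma:concetrationaroudx} and the tail is killed by the Gaussian decay of $\varphi^{t,x}$ together with the lower bound on $Q_t r(x)$ from Proposition \ref{prop:enfaitelemma}. The one spot needing repair is the radius of your ``good event'': Assumption \ref{assum:higherorder}-3 is a \emph{local} condition, controlling $\nabla^j u$ along the segment from $e^{-t}x$ to $y$ only when $\|y-e^{-t}x\|\le (K|u(e^{-t}x)|\vee 1)^{-K}\sim\log(\epsilon^{-1})^{-C}$, which is exactly why the paper splits at $\delta=R^{-1}\log(\epsilon^{-1})^{-R}$ rather than at $\|y-e^{-t}x\|\le 1$; on your larger event you would additionally need to invoke the convexity of $u$ (so that $u$ at intermediate points of the segment is bounded by its endpoint values, and then the assumption applied with zero displacement gives $\|\nabla^j u(z)\|\le K(|u(z)|^K+1)$ there) before the Cauchy--Schwarz absorption of the $|u|^{C_2}$ factors can go through. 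With that adjustment---or more simply by shrinking the good event to the paper's $\delta$, which renders the sublevel-set split on $\{u\le M\}$ unnecessary---your argument closes and coincides with the paper's.
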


The proof of Proposition \ref{prop:shouldthelastone} can be found in Section \ref{sec:prop:shouldthelastone}. Using Proposition \ref{prop:shouldthelastone} we deduce that
\begin{align*}
\|&\nabla^{i_B}\left(z\mapsto \frac{\nabla^{|B|} Q_{t} r(z)}{Q_{t} r(z)}\right)(x)\|\nonumber\\
    \leq & C\log(\epsilon^{-1})^{C_2(1+i_B)}(1-e^{-2t})^{-i_B/2+(\beta-\lfloor \beta \rfloor)/2}e^{-t(i_B+|B|)}.
\end{align*}

Therefore, using \eqref{eq:nablajscoregene} we have
\begin{align*}
     \|\nabla^js(t,x)\|\leq
    & C_j\log(\epsilon^{-1})^{C_2(1+j)}\sum_{\pi \in \Pi_{\lfloor \beta \rfloor}} \sum_{\substack{i_{\pi_1},...,i_{\pi_|B|}\\i_{\pi_1}+...+i_{\pi_{|B|}}=j+1-\lfloor \beta \rfloor}} \prod_{B\in \pi}(1-e^{-2t})^{-i_B/2+(\beta-\lfloor \beta \rfloor)/2}e^{-t(i_B+|B|)}\\
     \leq & C_j\log(\epsilon^{-1})^{C_2(1+j)}e^{-t(j+1)}(1-e^{-2t})^{-\frac{1}{2}(j+1-\lfloor \beta \rfloor-(\beta-\lfloor \beta \rfloor))}\\
     \leq & C_j\log(\epsilon^{-1})^{C_2(1+j)}e^{-t(j+1)}(1-e^{-2t})^{-\frac{1}{2}(j+1-\beta)},
\end{align*}
which concludes the bound on $\nabla^j s(t,\cdot)$ with $j> \lfloor \beta \rfloor -1$ in the case $\beta \geq 1$ as stated in the next Proposition.
\begin{proposition}
 There exists $C^\star>0$ such that for all $t\in (0, \log(\epsilon^{-1})^{-C^\star})$, $x \in A_0^\epsilon$ defined in \eqref{align:aepst} and $j\in \mathbb{N}_{\geq \lfloor \beta \rfloor}$, we have supposing $\beta\geq 1$ that
\begin{equation}
   \|\nabla^{j}\big( s(t,\cdot)+\text{Id}\big)(x)\|\leq  C_j\log(\epsilon^{-1})^{C_2(1+j)}e^{-t(j+1)}(1-e^{-2t})^{-\frac{1}{2}(j+1-\beta)}.
\end{equation}
\end{proposition}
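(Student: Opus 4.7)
The plan is to simply collect together the estimates that have already been developed in the preceding subsection; all the analytic work has been done, and what remains is a careful bookkeeping of the powers of $(1-e^{-2t})$, $e^{-t}$ and $\log(\epsilon^{-1})$ that come out of each building block. I would start from the Faa di Bruno expansion \eqref{eq:nablajscoregene}, which expresses $\nabla^{j}s(t,x)$ as a finite sum, over partitions $\pi \in \Pi_{\lfloor\beta\rfloor}$ and compositions $i_{\pi_1}+\cdots+i_{\pi_{|\pi|}}=j+1-\lfloor\beta\rfloor$, of products of factors of the form $\nabla^{i_B}\bigl(z\mapsto \tfrac{\nabla^{|B|}Q_t r(z)}{Q_t r(z)}\bigr)(x)$, with $|B|\leq \lfloor\beta\rfloor$. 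The point of reducing to $\lfloor\beta\rfloor$ rather than going all the way up to $j$ is that the quantities $\nabla^{|B|}r/r$ are well-defined (since $r$ is $\beta$-Hölder regular), which is exactly what makes Proposition \ref{prop:shouldthelastone} applicable.

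Each factor is then handled by Proposition \ref{prop:thepoly}, which rewrites it as $e^{-t(i_B+|B|)}$ times the expectation, under $p^{t,x}$, of $\tfrac{\nabla^{|B|}r(y)}{r(y)}\otimes\bigl(P^{i_B}(y,x)-\int P^{i_B}(z,x)dp^{t,x}(z)\bigr)$. Since the right-hand tensor is centered, one can center the left-hand tensor for free, after which Cauchy--Schwarz as in \eqref{eq:Chmoche} yields a product of two $L^2(p^{t,x})$ standard deviations. The first is bounded directly by Proposition \ref{prop:shouldthelastone}, giving the key gain $(1-e^{-2t})^{(\beta-\lfloor\beta\rfloor)/2}$ (up to logarithmic factors). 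The second, involving the polynomial $P^{i_B}$ in the random tensors $Y_1,\dots,Y_{i_B}$, is bounded by decomposing $P^{i_B}$ into monomials: factors $Y_m$ with $m\geq 2$ are controlled by Lemma \ref{lemma:concetrationaroudx} which gives $\|Y_m\|\leq C\log(\epsilon^{-1})^{C_2}(1-e^{-2t})^{-m/2}$, while the fluctuation of $Y_1$-monomials is treated by the $\log$-concavity of $\nu^{t,x}$ combined with the extended Brascamp--Lieb inequality (Lemma \ref{lemma:extendedbl}). Degree-counting gives a total $(1-e^{-2t})^{-i_B/2}$ up to logs, so that each factor contributes $C_j\log(\epsilon^{-1})^{C_2(1+i_B)} e^{-t(i_B+|B|)}(1-e^{-2t})^{-i_B/2+(\beta-\lfloor\beta\rfloor)/2}$.

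Finally, I would assemble the product over $B\in\pi$ and sum over $\pi$ and $(i_B)_{B\in\pi}$. The constraints $\sum_{B\in\pi} |B|=\lfloor\beta\rfloor$ and $\sum_{B\in\pi} i_B=j+1-\lfloor\beta\rfloor$ collapse the $e^{-t}$ exponents into $e^{-t(j+1)}$, and the $(1-e^{-2t})$ exponents into $-\tfrac{1}{2}(j+1-\lfloor\beta\rfloor)+\tfrac{|\pi|}{2}(\beta-\lfloor\beta\rfloor)$. Since $t$ is small, $(1-e^{-2t})\leq 1$, so the worst (largest) term is the one with the smallest power, which is attained at $|\pi|=1$ and equals exactly $-\tfrac{1}{2}(j+1-\beta)$. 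The number of partitions is a constant $C_j$, and the logarithmic factors accumulate into $\log(\epsilon^{-1})^{C_2(1+j)}$, which yields the stated bound.

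The only non-trivial step is the combinatorial accounting of exponents and verifying that the dominant contribution is indeed $|\pi|=1$; the substantive analytic input (Hölder variance of $\nabla^{|B|}r/r$, concentration of the centered polynomials $P^{i_B}$) has already been packaged in the preceding lemmas and propositions, so no further probabilistic argument is needed here.
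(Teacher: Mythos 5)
Your proposal is correct and follows essentially the same route as the paper: the Faa di Bruno reduction \eqref{eq:nablajscoregene}, Proposition \ref{prop:thepoly} plus the Cauchy--Schwarz step \eqref{eq:Chmoche}, Proposition \ref{prop:shouldthelastone} for the $(1-e^{-2t})^{(\beta-\lfloor\beta\rfloor)/2}$ gain, Lemma \ref{lemma:concetrationaroudx} and Lemma \ref{lemma:extendedbl} for the centered polynomials, and the same exponent bookkeeping (with the worst case $|\pi|=1$) to conclude.
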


The case \( \beta \in (0,1) \) can be handled in a similar manner by differentiating \eqref{eq:todifferentiatewhenbeta0} and obtaining polynomials centered with respect to \( \varphi^{t,x} \); we omit the details for conciseness, as the proof closely follows the same structure.

\subsection{Conclusion}
Let us summarize the results of the precedent sections.
\begin{itemize}
    \item If $\beta \in (0,1)$, we have for all $j\in \mathbb{N}_{\geq 0}$ and $x\in A_t^\epsilon$, $$\|\nabla^j (z\mapsto s(t,z)+z)(x)\| \leq C_j\log(\epsilon^{-1})^{C_2(1+j)}e^{-(j+1)t} (1-e^{-2t})^{-\frac{1}{2}(j+1-\beta)}.$$
        \item If $\beta \geq 1$, we have for all $x\in A_t^\epsilon$ and $j\in \{0,\lfloor \beta \rfloor -1\}$,
        $$\|\nabla^j (z\mapsto s(t,z)+z)(x)\| \leq C\log(\epsilon^{-1})^{C_2}e^{-(j+1)t},$$
        and if $j\in \mathbb{N}_{\geq \lfloor \beta \rfloor}$, we have
        $$\|\nabla^j (z\mapsto s(t,z)+z)(x)\| \leq C_j\log(\epsilon^{-1})^{C_2(1+j)}e^{-(j+1)t} (1-e^{-2t})^{-\frac{1}{2}(j+1-\beta)}.$$
\end{itemize}
This concludes the bound on the $\mathcal{H}^j$ norm of $s(t,\cdot)$ when $j$ is an integer. Following the method developed in Section~B.2 of \cite{stephanovitch2024smooth}, one can show that for any integer \( j \geq 1 \) and \( \eta \in (0,1) \), the following estimate holds:
\[
\left\| \nabla^j \left( z \mapsto s(t,z) + z \right) \right\|_{\mathcal{H}^{\eta}(A_t^\epsilon)} \leq C_{j,\eta} \log(\epsilon^{-1})^{C_2(1 + j+\eta)} e^{-(1+j + \eta)t} \left(1 + t^{-\frac{1}{2} \big( (1 + j + \eta - \beta) \vee 0 \big)} \right),
\]
where \( A_t^\epsilon \) is defined as previously. Since the proof closely mirrors that of \cite{stephanovitch2024smooth}, we omit it here for conciseness.

\subsection{Details of the proofs of higher-order results and neural net approximation}
\subsubsection{Proof of Proposition \ref{prop:masstlarge}}\label{sec:prop:masstlarge}
\begin{proof}
We have for \( X_0 \sim p \) and \( Z \sim \mathcal{N}(0, I_d) \) independent random variables,
\begin{align}
    p_t(A^\epsilon_0)=&\mathbb{P}(e^{-t}X_0+\sqrt{1-e^{-2t}}Z\in A^\epsilon_0)\nonumber\\
    \geq &\mathbb{P}\Big( \|X_0-y^\star\|^2\leq \frac{C^\star}{4} \big(\log(\epsilon^{-1})(1+\alpha^{-1})+K\big)\Big)\nonumber\\
    & \times \mathbb{P}\Big(\|Z-y^\star\|^2\leq \frac{C^\star}{4} \big(\log(\epsilon^{-1})(1+\alpha^{-1})+K\big)\Big)\nonumber\\
\geq & 1-\epsilon
\end{align}
for $C^\star>0$ large enough as by assumption we have $\|y^\star\|\leq K$.
\end{proof}

\subsubsection{Proof of Proposition \ref{prop:masstsmall}}\label{sec:prop:masstsmall}
\begin{proof}
For \( X_0 \sim p \) and \( Z \sim \mathcal{N}(0, I_d) \) independent random variables, we have
\begin{align}\label{eq:fhosijsbdcbjksds}
    p_t(A^\epsilon_0)=&\mathbb{P}(e^{-t}X_0+\sqrt{1-e^{-2t}}Z\in A^\epsilon_0)\nonumber\\
    \geq & \mathbb{P}(X\in \{u(y)\leq \log(\epsilon^{-1})+R\})\mathbb{P}(\sqrt{1-e^{-2t}}\|Z\|\leq K^{-1} (\log(\epsilon^{-1})+R)^{-K})\nonumber\\
    \geq & 1-\epsilon,
\end{align}
for $R>0$ and $C^\star>>R$ large enough recalling Assumption \ref{assum:higherorder}-3 and using that $\|a\|_\infty \leq K$.
\end{proof}

\subsubsection{Proof of Lemma \ref{lemma:concentrationsmallthigh}}\label{sec:lemma:concentrationsmallthigh}

\begin{proof}

Let $y^\star$ defined in \eqref{eq:ystar},
we have
\begin{align}\label{alignjhdskaeoedjhe}
\int \|F_{l} (y-e^{-t}x)\| dp^{t,x}(y)
    & \leq C \int \sum_{i=1}^{l}(\|y\|^i+\|e^{-t}x\|^i)dp^{t,x}(y)\nonumber\\
    & \leq C \log(\epsilon^{-1})^{Cl } + C\int \sum_{i=1}^{l}\|y\|^idp^{t,x}(y).
\end{align}
Now, as on one hand we have $\|y^\star\|\leq K$ by Assumption \ref{assum:higherorder}-2 and on the other hand we have $$\int \mathds{1}_{y\notin B\big(y^\star,\log(\epsilon^{-1})^{1/2}\big)}p(y)dy\leq C\epsilon^\alpha$$
by Assumption \ref{assum:1}-1,
we deduce that there exists $R>0$ independent of $p$ such that,
$$\int \mathds{1}_{y\in B(0,R)}p(y)dy\geq 1/2.$$
Therefore, for $x\in A_t^\epsilon$ we have 
\begin{align}\label{eq:hbckzndihhdiidfiebks}
    \int r(z)\varphi^{t,x}(z)dz\geq& \int r(z)\varphi^{t,x}(z) \mathds{1}_{\|y\|\leq R}dz\nonumber\\
    \geq & C^{-1}(1-e^{-2t})^{-d/2} \exp\left(-\frac{C(\|e^{-t}x\|^2+R^2)}{1-e^{-2t}}\right)\nonumber\\
    \geq & C^{-1} \exp\left(-\frac{C(C^\star \log(\epsilon^{-1})+C+R^2)}{1-e^{-2t}}\right).
\end{align}
Furthermore, for $i\in \{1,...,l\}$ and $R_2>0$, by Assumption \ref{assum:1}-1 we have
\begin{align}\label{eq:hbckzndihfiebks}
\int & \|y\|^ir(y)\varphi^{t,x}(y)\mathds{1}_{\|y\|\geq \log(\epsilon^{-1})^{R_2}}dy\nonumber\\
= & r(y^\star) \int \|y\|^ir(y)r(y^\star)^{-1}\varphi^{t,x}(y)\mathds{1}_{\|y\|\geq \log(\epsilon^{-1})^{R_2}}dy\nonumber\\
\leq & C \int (\|y-e^{-t}x\|^i+\|e^{-t}x\|^i) \exp\left(-\frac{\alpha\log(\epsilon^{-1})^{2R_2}}{2(1-e^{-2t})}-\frac{\|y-e^{-t}x\|^2}{2(1-e^{-2t})}\right)dy\nonumber\\
\leq & C\log(\epsilon^{-1})^{C_2} \int \exp\left(-\frac{\alpha\log(\epsilon^{-1})^{R_2}}{2(1-e^{-2t})}-\frac{\|y-e^{-t}x\|^2}{4(1-e^{-2t})}\right)dy\nonumber\\
\leq & C\exp\left(-\frac{\alpha\log(\epsilon^{-1})^{R_2}}{4(1-e^{-2t})}\right).
\end{align}

Then, putting together \eqref{eq:hbckzndihhdiidfiebks} and \eqref{eq:hbckzndihfiebks} we deduce that
\begin{align*}
    \int \sum_{i=1}^{l}\|y\|^idp^{t,x}(y)\leq & C \log(\epsilon^{-1})^{lR_2}+ \sum_{i=1}^{l}\int \|y\|^i\frac{r(y)\varphi^{t,x}(y)}{\int r(z)\varphi^{t,x}(z)dz}\mathds{1}_{\|y\|\geq \log(\epsilon^{-1})^{R_2}}dy\nonumber\\
    \leq & C \log(\epsilon^{-1})^{lR_2 }+ C \exp\left(\frac{C(C^\star \log(\epsilon^{-1})+C+R^2)}{1-e^{-2t}}\right)\sum_{i=1}^{l}\exp\left(-\frac{\alpha\log(\epsilon^{-1})^{R_2}}{4(1-e^{-2t})}\right)\nonumber\\
    \leq &C \log(\epsilon^{-1})^{lR_2 },
\end{align*}
taking $R_2>0$ large enough. Putting this estimate in the bound \eqref{alignjhdskaeoedjhe}, we get the result.

\end{proof}

\subsubsection{Proof of Lemma \ref{lemma:boundsquared}}\label{sec:lemma:boundsquared}

\begin{proof}
We have
\begin{align*}
   |r(y)-r(e^{-t}x)|=&|e^{-u(y)+a(y)+\|y\|^2/2}- e^{-u(e^{-t}x)+a(e^{-t}x)+\|e^{-t}x\|^2/2}|\\
    =&|e^{a(e^{-t}x)}(e^{-u(e^{-t}x)+\|e^{-t}x\|^2/2}- e^{-u(y)+\|y\|^2/2})+ e^{-u(y)+\|y\|^2/2}(e^{a(e^{-t}x)}-e^{a(e^{-t}x)})|\\
    \leq& C(r(e^{-t}x)\vee r(y)) \int_0^1\|-\nabla u (y+s(e^{-t}x-y))+y+s(e^{-t}x-y)\|\| e^{-t}x-y\|ds\\
    &+ Ce^{-u(y)+\|y\|^2/2}\| e^{-t}x-y\|^{\beta\wedge 1}.
\end{align*}
Taking 
\begin{equation}\label{eq:delta1}
\delta=R^{-1}\log(\epsilon^{-1})^{-R},
\end{equation}
with $R>0$ large enough,
we provide separate bounds whether $\|y-e^{-t}x\|\leq \delta$ or not.
We have
\begin{align*}
    \int & \left( \frac{r(y)-r(e^{-t}x)}{Q_tr(x)}\right)^{2}\varphi^{t,x}(y)\mathds{1}_{\|y-e^{-t}x\|\leq \delta}dy\\
    \leq &\frac{C}{Q_tr(x)^2} \int (r(e^{-t}x)\vee r(y))^2 \|\sup_{s\in[0,1]}-\nabla u (y+s(e^{-t}x-y))+y+s(e^{-t}x-y)\|^2 \| e^{-t}x-y\|^2\\
    & \times \varphi^{t,x}(y)\mathds{1}_{\|y-e^{-t}x\|\leq \delta}dy\\
    &+\frac{C}{Q_tr(x)^2} \int r(y)^2 \| e^{-t}x-y\|^{2(\beta\wedge 1)} \varphi^{t,x}(y)\mathds{1}_{\|y-e^{-t}x\|\leq \delta}dy.
\end{align*} 
Taking $R>0$ large enough  and using Proposition \ref{prop:enfaitelemma}, we have that for all $y\in B(e^{-t}x,\delta)$,
 $$\frac{(r(e^{-t}x)\vee r(y))^2}{Q_tr(x)^2}\leq C,$$ 
 and using that $e^{-t}x\in A_t^\epsilon$, we have from Assumptions \ref{assum:higherorder} that
$$\|\sup_{s\in[0,1]}-\nabla u (y+s(e^{-t}x-y))+y+s(e^{-t}x-y)\|\leq C\log(\epsilon^{-1})^{C_2}.$$ 
Therefore,
 we obtain
\begin{align*}
   & \frac{1}{Q_tr(x)^2} \int (r(e^{-t}x)\vee r(y))^2 \|\sup_{s\in[0,1]}-\nabla u (y+s(e^{-t}x-y))+y+s(e^{-t}x-y)\|^2 \| e^{-t}x-y\|^2 \varphi^{t,x}(y)\mathds{1}_{\|y-e^{-t}x\|\leq \delta}dy\\
    &+\frac{C}{Q_tr(x)^2} \int r(y)^2 \| e^{-t}x-y\|^{2(\beta\wedge 1)} \varphi^{t,x}(y)\mathds{1}_{\|y-e^{-t}x\|\leq \delta}dy\\
    & \leq C\log(\epsilon^{-1})^{C_2}\int \|y-e^{-t}x\|^{2(\beta\wedge 1)}\mathds{1}_{\|y-e^{-t}x\|\leq \delta}\varphi^{t,x}(y)dy\\
    & \leq C\log(\epsilon^{-1})^{C_2} (1-e^{-2t})^{2(\beta\wedge 1)}.
\end{align*}
On the other hand, using Proposition \ref{prop:enfaitelemma} we have
\begin{align*}
     \int  \left( \frac{r(y)-r(e^{-t}x)}{Q_tr(x)}\right)^{2}\varphi^{t,x}(y)\mathds{1}_{\|y-e^{-t}x\|\geq \delta}dy
    &\leq C\frac{\exp(-\frac{\delta^2}{2(1-e^{-2t})})}{Q_tr(x)^2(1-e^{-2t})^{d/2}}\\
    & \leq C\sup_{w\in B(e^{-t}x,(1-e^{-2t})^{1/2})}\exp\left(2u(w)\right)\exp(-\frac{\delta^2}{4(1-e^{-2t})})\\
    & \leq C \exp(-\frac{\delta^2}{4(1-e^{-2t})})\\
    & \leq C(1-e^{-2t}),
\end{align*}
 taking $C^\star>0$ large enough compared to $R$.

\end{proof}

\subsubsection{Proof of Proposition \ref{prop:enfaiteccestcoro}}\label{sec:prop:enfaiteccestcoro}
\begin{proof}
Using Proposition \ref{prop:shouldthelastone} and Assumption \ref{assum:higherorder}-3 we have
\begin{align*}
    \|\int \frac{\nabla^ir(y)}{r(y)} dp^{t,x}(y)dy\|\leq & \|\int \frac{\nabla^ir(e^{-t}x)}{r(e^{-t}x)} dp^{t,x}(y)dy\|+\|\int\left(\frac{\nabla^ir(e^{-t}x)}{r(e^{-t}x)}- \frac{\nabla^ir(y)}{r(y)}\right) dp^{t,x}(y)dy\|\\
    \leq & C\log(\epsilon^{-1})^{C_2}.
\end{align*}
\end{proof}

\subsubsection{Proof of Proposition \ref{prop:thepoly}}\label{sec:prop:thepoly}

\begin{proof}
From Lemma 4 in \cite{stephanovitch2024smooth}, we have
     for all $f\in L^2(\mathbb{R}^d)$ that
    \begin{align*}
    \int f(y)\nabla_x p^{t,x}(y)d(y) =\int f(y)\otimes \frac{y-\int zdp^{t,x}(z)}{1-e^{-2t}} dp^{t,x}(y),
\end{align*}

so we deduce that the result is true for $j=1$.
By induction suppose that \eqref{align:nablajqsurq} is true for a certain $j\geq 1$, then
\begin{align*}
    &\nabla^{j+1}\left(z\mapsto \frac{\nabla^{l} Q_{t} r(z)}{Q_{t} r(z)}\right)(x)\\
    =&e^{-tj}\int \frac{\nabla^{l}r(y)}{r(y)}\otimes\left(\nabla_x P^{j}(y,x)-\nabla_x\int  P^{j}(z,x)dp^{t,x}(z) \right)dp^{t,x}(y)\\
    &+ e^{-tj}\int \frac{\nabla^{l}r(y)}{r(y)}\otimes\left( P^{j}(y,x)-\int P^{j}(z,x)dp^{t,x}(z) \right)\nabla_x p^{t,x}(y)dy\\
    =&e^{-tj}\int \frac{\nabla^{l}r(y)}{r(y)}\otimes\left(\nabla_x P^{j}(y,x)-\int  \nabla_xP^{j}(z,x)dp^{t,x}(z) \right)dp^{t,x}(y)\\
    &+e^{-t(j+1)}\int \frac{\nabla^{l}r(y)}{r(y)}\otimes\int  \left( P^{j}(z,x)-\int P^{j}(w,x)dp^{t,x}(w) \right)\otimes \frac{z-\int wdp^{t,x}(w)}{1-e^{-2t}} dp^{t,x}(z) dp^{t,x}(y)\\
    &+ e^{-t(j+1)}\int \frac{\nabla^{l}r(y)}{r(y)}\otimes\left( P^{j}(y,x)-\int P^{j}(z,x)dp^{t,x}(z) \right)\otimes \frac{y-\int zdp^{t,x}(z)}{1-e^{-2t}} dp^{t,x}(y)dy\\
    =&e^{-t(j+1)}\int \frac{\nabla^{l}r(y)}{r(y)}\otimes\left( P^{j+1}(y,x)-\int  P^{j+1}(z,x)dp^{t,x}(z) \right)dp^{t,x}(y)
\end{align*}
with 
\begin{align*}
    P^{j+1}(y,x) = e^{t}\nabla_x P^{j+1}(y,x)+\left( P^{j}(y,x)-\int P^{j}(z,x)dp^{t,x}(z) \right)\otimes \frac{y-\int zdp^{t,x}(z)}{1-e^{-2t}} .
\end{align*}

\end{proof}

\subsubsection{Proof of Lemma \ref{lemma:concetrationaroudx}}\label{sec:lemma:concetrationaroudx}

Let us first give an upper bound on $Q_tr(x)$.

\begin{proposition}\label{prop:enfaitelemma}
 There exists $C^\star>0$ such that if $t\leq \log(\epsilon^{-1})^{-C^\star}$ and $x\in A_t^\epsilon$, then  
 \begin{align*}
    \frac{1}{Q_tr(x)}\leq C\sup_{w\in B(e^{-t}x,(1-e^{-2t})^{1/2})}\exp\left(u(w)\right),
\end{align*}
and for all $\gamma \leq |Ku(e^{-t}x)|^{-(K+ 1)}\wedge 1,y\in B(e^{-t}x,\gamma)$, we have
\begin{align*}
  \frac{r(y)}{Q_tr(x)}  \leq C.
\end{align*}
\end{proposition}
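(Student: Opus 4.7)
The plan is to lower-bound $Q_tr(x)$ by restricting the defining integral to a small ball around $c:=e^{-t}x$, where Assumption~\ref{assum:higherorder}-3 controls the variation of $u$. Two ball radii appear, $s:=\sqrt{1-e^{-2t}}$ and $\gamma$, and I first verify that both satisfy $s,\gamma\leq (K|u(c)|\vee 1)^{-K}$ once $C^\star$ is large enough. Indeed, $x\in A_t^\epsilon$ gives $u(x)\leq K(\log(\epsilon^{-1})+R+1)^K$, so $\|c-x\|=\|x\|(1-e^{-t})\leq \|x\|t$ and $t\leq \log(\epsilon^{-1})^{-C^\star}$ together with Assumption~\ref{assum:higherorder}-3 applied at $x$ force $|u(c)-u(x)|\leq C$, so $|u(c)|$ is also only polylogarithmic in $\epsilon^{-1}$. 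The bounds on $\gamma$ and $t$ then place both balls inside the controlled region where Assumption~\ref{assum:higherorder}-3 applies.

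For the first inequality, on $B(c,s)$ the Gaussian factor satisfies $\varphi^{t,x}(z)\geq C^{-1}(1-e^{-2t})^{-d/2}$, while the Hölder bound $\|a\|_{\mathcal{H}^\beta(\mathrm{supp}(p))}\leq K$ and $\|z\|^2/2\geq 0$ give $r(z)\geq e^{-K}\exp(-u(z))\geq e^{-K}/\sup_{B(c,s)}\exp(u)$. Integrating over the ball, the volume factor $V_d s^d$ cancels the normalization $s^{-d}$ of $\varphi^{t,x}$, yielding $Q_tr(x)\geq C^{-1}/\sup_{B(c,s)}\exp(u)$, which is the first claim.

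For the second inequality, I need a sharper lower bound on $Q_tr(x)$ that keeps the factor $\exp(\|c\|^2/2)$ appearing in $r(y)$. On $B(c,s)$, Assumption~\ref{assum:higherorder}-3 gives $\|\nabla u\|\leq K(|u(c)|^K+1)$, hence $|u(z)-u(c)|\leq K(|u(c)|^K+1)s$, which is bounded by a universal constant provided $C^\star$ is chosen large enough relative to $K$. Combining with $\|z\|^2/2\geq \|c\|^2/2-\|c\|s$ on the ball produces $Q_tr(x)\geq C^{-1}\exp(-u(c)+\|c\|^2/2-\|c\|s)$. On the other hand, for $y\in B(c,\gamma)$ with $\gamma\leq |Ku(c)|^{-(K+1)}\wedge 1$, the same variation estimate shows $|u(y)-u(c)|\leq C$ (since $K(|u(c)|^K+1)\gamma$ is bounded by $2K^{-K}$ when $|u(c)|\geq 1$ and by $2K$ otherwise), and $\|y\|^2/2\leq \|c\|^2/2+\|c\|\gamma+\gamma^2/2$, so $r(y)\leq C\exp(-u(c)+\|c\|^2/2+\|c\|\gamma)$. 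Dividing gives $r(y)/Q_tr(x)\leq C\exp(\|c\|(s+\gamma))$.

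The main obstacle is bounding the cross-term $\|c\|(s+\gamma)$ by a universal constant. Strong convexity $\nabla^2 u\succeq \alpha\mathrm{Id}$ from Assumption~\ref{assum:1}-1 combined with $\|\argmin u\|\leq K$ from Assumption~\ref{assum:higherorder}-2 yields $\|c\|\leq \|x\|\leq C(1+u(x)^{1/2})\leq C\log(\epsilon^{-1})^{K/2}$, while $s\leq C\log(\epsilon^{-1})^{-C^\star/2}$ and $\gamma\leq C\log(\epsilon^{-1})^{-K(K+1)}$. Choosing $C^\star$ larger than a fixed polynomial in $K$ therefore makes $\|c\|s$ and $\|c\|\gamma$ bounded, concluding the estimate. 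The rest of the argument is a routine restriction-to-a-small-ball bound; the only delicate point is the bookkeeping of polylogarithmic exponents across the repeated uses of Assumption~\ref{assum:higherorder}-3.
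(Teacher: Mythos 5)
Your approach is essentially the paper's: lower-bound $Q_tr(x)$ by restricting the defining integral to the ball $B(e^{-t}x,\sqrt{1-e^{-2t}})$, where $|a|\le K$ and the Gaussian weight times the ball's volume is of order one, and then control the oscillation of $u$ on $B(e^{-t}x,\gamma)$ via Assumption~\ref{assum:higherorder}-3. Your extra bookkeeping of the factor $e^{\|y\|^2/2}$ is in fact more careful than the paper's own proof, which bounds $e^{\|w\|^2/2}\ge 1$ from below in the denominator but never bounds $e^{\|y\|^2/2}$ from above in the numerator; the cross-term $\|e^{-t}x\|(s+\gamma)$ you isolate is therefore a genuine point that needs the control you attempt.

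The one step that fails as written is the claim $\gamma\le C\log(\epsilon^{-1})^{-K(K+1)}$. The hypothesis only gives $\gamma\le |Ku(e^{-t}x)|^{-(K+1)}\wedge 1$, and when $|u(e^{-t}x)|$ is small (e.g.\ $e^{-t}x$ near the minimizer of $u$) this permits $\gamma$ as large as $1$, so the stated decay of $\gamma$ in $\log(\epsilon^{-1})$ does not follow. The conclusion $\|e^{-t}x\|\gamma\le C$ still holds, but via a case split rather than the uniform bound you invoke: if $u(e^{-t}x)$ is below a fixed constant, then strong convexity of $u$ together with $\|\argmax_x e^{-u(x)}\|\le K$ (and the normalization of $u(y^\star)$ implicit in the paper's claim $A_t^\epsilon\subset B(0,C\log(\epsilon^{-1})^{C_2})$) forces $\|e^{-t}x\|\le C$ directly; otherwise $\|e^{-t}x\|\le C(1+u(e^{-t}x)^{1/2})$ while $\gamma\le (Ku(e^{-t}x))^{-(K+1)}$ with $K+1\ge 2>1/2$, so the product is bounded. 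With that one-line repair the argument is complete and coincides with the paper's.
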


\begin{proof}
We have \begin{align*}
    Q_{t}r(x) & = \int \varphi^{t,x}(y)r(y)dy\nonumber\\
    & \geq  \min_{w\in \overline{B}(e^{-t}x,(1-e^{-2t})^{1/2}))}\exp\left(-u(w)+a(w)\right)\int \varphi^{t,x}(y)\mathds{1}_{\|y-e^{-t}x\|\leq(1-e^{-2t})^{1/2}}dy\nonumber\\
    & \geq C^{-1}\min_{w\in \overline{B}(e^{-t}x,(1-e^{-2t})^{1/2}))}\exp\left(-u(w)\right)\int \mathds{1}_{\|y\|\leq\frac{(1-e^{-2t})^{1/2}}{(1-e^{-2t})^{1/2}}}\gamma_d(y)dy,\nonumber\\
    & \geq C^{-1}\min_{w\in \overline{B}(e^{-t}x,(1-e^{-2t})^{1/2}))}\exp\left(-u(w)\right).
\end{align*}
Furthermore, for $\gamma \leq |Ku(e^{-t}x)|^{-(K+ 1)}\wedge 1$ and $y\in B(e^{-t}x,\gamma)$, we have by Assumption \ref{assum:higherorder}-3 that
\begin{align*}
    \frac{r(y)}{Q_tr(x)}  \leq & C \exp\left(\max_{z\in \overline{B}(e^{-t}x,\gamma)}u(z)-\min_{w\in \overline{B}(e^{-t}x,\gamma)}u(w)\right)\\
    \leq & C\exp\left(\max_{z\in \overline{B}(e^{-t}x,\gamma)}\|\nabla u(z)\|\gamma\right)
    \\
    \leq & \exp\left(K(|u(e^{-t}x)|^K+1)\gamma\right)\\
    \leq & C.
\end{align*}
\end{proof}
We can now give the proof of the Lemma \ref{lemma:concetrationaroudx}.
\begin{proof}[Proof of Lemma \ref{lemma:concetrationaroudx}]

Let $\phi:\mathbb{R}^d\rightarrow \mathbb{R}^d$ be the $C^1$ diffeomorphism of the polar change of coordinates, we have 
\begin{align*}
    \int&\|y-e^{-t}x\|^{2\eta}dp^{t,x}(y)\\
     = &\int_0^\infty\int_0^\pi... \int_0^{2\pi} \rho^{2\eta }\frac{r(e^{-t}x +\phi(\rho,\theta_1,...,\theta_{d-1}))}{Qtr(x)} \frac{e^{-\frac{\rho^2}{2(1-e^{-2t})}}}{(2\pi(1-e^{-2t}))^{d/2}}|\text{det}(\nabla \phi(\rho,\theta_1,...,\theta_{d-1}))|d\rho d\theta_1...d\theta_{d-1}\\
    \leq & \frac{C}{Qtr(x)}\int_0^{\infty}\int_0^\pi... \int_0^{2\pi} \rho^{2\eta +d-1 }r(e^{-t}x +\phi(\rho,\theta_1,...,\theta_{d-1}))\frac{e^{-\frac{\rho^2}{2(1-e^{-2t})}}}{(1-e^{-2t})^{d/2}}d\rho d\theta_1...d\theta_{d-1}\\
    \leq & \frac{(1-e^{-2t})^{\eta}}{Qtr(x)}\int_0^{\infty}\int_0^\pi... \int_0^{2\pi} \rho^{2\eta +d-1 }r(e^{-t}x +\phi(\rho(1-e^{-2t})^{1/2},\theta_1,...,\theta_{d-1}))e^{-\frac{\rho^2}{2}}d\rho d\theta_1...d\theta_{d-1}.
\end{align*}
Let $\gamma=(1-e^{-2t})^{-1/4}$, using Proposition \ref{prop:enfaitelemma} we have
\begin{align*}
  &\frac{1}{Q_tr(x)} \int_0^{\infty}\int_0^\pi... \int_0^{2\pi} \rho^{2\eta +d-1 }r(e^{-t}x +\phi(\rho(1-e^{-2t})^{1/2},\theta_1,...,\theta_{d-1}))e^{-\frac{\rho^2}{2}}\mathds{1}_{\{\rho\leq \gamma\}}d\rho d\theta_1...d\theta_{d-1}  \\
  &\leq C\int_0^{\infty}\int_0^\pi... \int_0^{2\pi} \rho^{2\eta +d-1 }\exp\left(\max_{z\in \overline{B}(e^{-t}x,\gamma(1-e^{-2t})^{1/2})}u(z)-\min_{w\in \overline{B}(e^{-t}x,\gamma(1-e^{-2t})^{1/2})}u(w)\right)\\
  &\quad \quad \times e^{-\frac{\rho^2}{2}}\mathds{1}_{\{\rho\leq \gamma\}}d\rho d\theta_1...d\theta_{d-1}
  \\
  &\leq C\int_0^{\infty}\int_0^\pi... \int_0^{2\pi} \rho^{2\eta +d-1 }\exp\left(\max_{z\in \overline{B}(e^{-t}x,(1-e^{-2t})^{1/4})}\|\nabla u(z)\|C(1-e^{-2t})^{1/4}\right)e^{-\frac{\rho^2}{2}}\mathds{1}_{\{\rho\leq \gamma\}}d\rho d\theta_1...d\theta_{d-1}
  \\
  &\leq C\int_0^{\infty}\int_0^\pi... \int_0^{2\pi} \rho^{2\eta +d-1 }\exp\left(C(\| u(e^{-t}x)\|+1)^K(1-e^{-2t})^{1/4}\right)e^{-\frac{\rho^2}{2}}\mathds{1}_{\{\rho\leq \gamma\}}d\rho d\theta_1...d\theta_{d-1}
  \\
  &\leq C\int_0^{\infty}\int_0^\pi... \int_0^{2\pi} \rho^{2\eta +d-1 }e^{-\frac{\rho^2}{2}}\mathds{1}_{\{\rho\leq \gamma\}}d\rho d\theta_1...d\theta_{d-1}\\
  &\leq C_\eta.
\end{align*}
The same way, using again Proposition \ref{prop:enfaitelemma} we obtain
\begin{align*}
    &\frac{1}{Q_tr(x)} \int_0^{\infty}\int_0^\pi... \int_0^{2\pi} \rho^{2\eta +d-1 }r(e^{-t}x +\phi(\rho(1-e^{-2t})^{1/2},\theta_1,...,\theta_{d-1}))e^{-\frac{\rho^2}{2}}\mathds{1}_{\{\rho> \gamma\}}d\rho d\theta_1...d\theta_{d-1}\\
    &\leq C_\eta\frac{e^{-\frac{\gamma^2}{4}}}{Q_tr(x)}\\
    & \leq C_\eta\exp\left(-\frac{(1-e^{-2t})^{-1/2}}{4}\right)\exp\left(u(e^{-t}x)+\max_{z\in \overline{B}(e^{-t}x,(1-e^{-2t})^{1/2})}\|\nabla u(z)\|(1-e^{-2t})^{1/2}\right)\\
     & \leq C_\eta\exp\left(-\frac{(1-e^{-2t})^{-1/2}}{8}\right),
\end{align*}
taking $C^\star>0$ large enough.
\end{proof}

\subsubsection{Proof of Proposition \ref{prop:shouldthelastone}}\label{sec:prop:shouldthelastone}

\begin{proof}As $a\in \mathcal{H}^\beta_K$ and $u\in C^{\lfloor \beta \rfloor+1}$, we have
\begin{align*}
    \|\frac{\nabla^i r}{r}(y)-\frac{\nabla^i r}{r}(e^{-t}x)\|\leq & C \|y-e^{-t}x\|^{\beta-\lfloor \beta \rfloor}\left(1+\sum_{j=1}^{\lfloor \beta \rfloor+1}\sup_{t\in [0,1]}\|\nabla^j u(y+t(e^{-t}x-y))\|\right)
\end{align*}
so  $x\in A_\epsilon^t$ implies that for all $y\in \mathbb{R}^d$ such that $\|e^{-t}x-y\|\leq R^{-1}\log(\epsilon^{-1})^{-R}$ for $R>0$ large enough, we have
\begin{align*}
    \|\frac{\nabla^i r}{r}(y)-\frac{\nabla^i r}{r}(e^{-t}x)\|\leq & C\log(\epsilon^{-1})^{C_2(1+i)} \|y-e^{-t}x\|^{\beta-\lfloor \beta \rfloor}.
\end{align*}
Taking $\delta=R^{-1}\log(\epsilon^{-1})^{-R}$,
we provide separate bounds on 
    \begin{align*}
     \int \|\frac{\nabla^i r}{r}(y)-\frac{\nabla^i r}{r}(e^{-t}x)\|^2\mathds{1}_{\|y-e^{-t}x\|\leq \delta}dp^{t,x}(y)        
\end{align*}   
and
\begin{align*}  
     \int \|\frac{\nabla^i r}{r}(y)-\frac{\nabla^i r}{r}(e^{-t}x)\|^2\mathds{1}_{\|y-e^{-t}x\|\geq \delta}dp^{t,x}(y).
\end{align*}
For the first term,  we have
\begin{align*}
    \int & \|\frac{\nabla^i r}{r}(y)-\frac{\nabla^i r}{r}(e^{-t}x)\|^2\mathds{1}_{\|y-e^{-t}x\|\leq \delta}dp^{t,x}(y)\\
    & \leq C\log(\epsilon^{-1})^{C_2(1+i)}\int \|y-e^{-t}x\|^{2(\beta-\lfloor \beta \rfloor)}\mathds{1}_{\|y-e^{-t}x\|\leq \delta}dp^{t,x}(y).
\end{align*}
so using Lemma \ref{lemma:concetrationaroudx} we get
\begin{align*}
    \int \|\frac{\nabla^i r}{r}(y)-\frac{\nabla^i r}{r}(e^{-t}x)\|^2\mathds{1}_{\|y-e^{-t}x\|\leq \delta}dp^{t,x}(y) & \leq C\log(\epsilon^{-1})^{C_2(1+i)}(1-e^{-2t})^{2(\beta-\lfloor \beta \rfloor)}.
\end{align*}

Now, for the other term we have
\begin{align*}
    &\int \|\frac{\nabla^i r}{r}(y)-\frac{\nabla^i r}{r}(e^{-t}x)\|^2\mathds{1}_{\|y-e^{-t}x\|\geq \delta}dp^{t,x}(y)\\
   & \leq  C\int \|\frac{\nabla^i r}{r}(y)-\frac{\nabla^i r}{r}(e^{-t}x)\|^2\frac{\exp(-u(y))}{Qtr(x)(1-e^{-2t})^{d/2}}\exp(-\frac{\delta^2}{2(1-e^{-2t})})dy\\
    &\leq C\log(\epsilon^{-1})^{C_2(1+i)}\frac{\exp(-\frac{\delta^2}{2(1-e^{-2t})})}{Q_tr(x)(1-e^{-2t})^{d/2}}\int \exp(-u(y)/2)dy\\
    &\leq C\log(\epsilon^{-1})^{C_2(1+i)}\frac{\exp(-\frac{\delta^2}{2(1-e^{-2t})})}{Q_tr(x)(1-e^{-2t})^{d/2}}\\
    & \leq C\exp(-\frac{1}{4(1-e^{-2t})^{1/2}}),
\end{align*}
using Proposition \ref{prop:enfaitelemma} and taking $C^\star>0$ large enough.

\end{proof}

\subsubsection{Proof of Corollary \ref{coro:diffusregutime}}\label{sec:timeregu}

\begin{proof}
Suppose that $\gamma$ is an integer, the non-integer case can be treated similarly. Let us first recall from \eqref{eq:jnhodishsiozkdp} that
\begin{equation*}
\nabla \log p_t(x) = -x + \nabla \log Q_t r(x).
\end{equation*}
Now, we have 
\begin{align*}
\partial^{\gamma}_t \nabla \log Q_tr(x)
    & = \nabla \sum_{\pi \in \Pi_{\gamma}} (-1)^{|\pi|+1}\prod_{B\in \pi} \frac{\partial_t^{|B|} Q_tr(x)}{Q_tr(x)}\\
    & = \sum_{\pi \in \Pi_{\gamma}} (-1)^{|\pi|+1} \sum_{B\in \pi}\frac{\nabla \partial_t^{|B|} Q_tr(x)}{Q_tr(x)} \prod_{C\neq B\in \pi} \frac{ \partial_t^{|C|} Q_tr(x)}{Q_tr(x)}.
\end{align*}
Now, as
\begin{align*}
    \partial_t Q_tr(x) = \Delta Q_tr(x) -\langle x, \nabla Q_tr(x)\rangle,
\end{align*}

we have for all $\pi \in \Pi_{\gamma}$ that
$$\sum_{B\in \pi}\frac{\nabla \partial_t^{|B|} Q_tr(x)}{Q_tr(x)} \prod_{C\neq B\in \pi} \frac{ \partial_t^{|C|} Q_tr(x)}{Q_tr(x)}= \sum_{B\in \pi}\frac{\nabla F^{|B|} Q_tr(x)}{Q_tr(x)} \prod_{C\neq B\in \pi} \frac{ F^{|C|} Q_tr(x)}{Q_tr(x)}$$
with 
$$F^i=F\circ F^{i-1}, \quad F(g) = \Delta g - \langle x,\nabla g\rangle.$$
In terms of space-regularity, we know from Section \ref{sec:theo:highregu} that 
$$\|\frac{\nabla^i Q_tr(x)}{Q_tr(x)}\|\leq C\log(\epsilon^{-1})^{C_2} e^{-t}\text{ for } i\in \{1,...,\lfloor \beta\rfloor\}$$
and
$$\|\frac{\nabla^{\lfloor \beta\rfloor+j} Q_tr(x)}{Q_tr(x)}\|\leq C_j\log(\epsilon^{-1})^{C_2(1+j)}e^{-t}(1+t^{-\frac{j-(\beta-\lfloor \beta\rfloor)}{2}}) \text{ for } j\in \mathbb{N}_{>0}.$$
We deduce that for all $\pi \in \Pi_\gamma$ we have
\begin{align*}
\|&\sum_{B\in \pi}\frac{\nabla F^{|B|} Q_tr(x)}{Q_tr(x)} \prod_{C\neq B\in \pi} \frac{ F^C Q_tr(x)}{Q_tr(x)}\|\\
&\leq C_j\log(\epsilon^{-1})^{C_2(1+\gamma)}e^{-t}\sum_{B\in \pi}(1+t^{-(|B|-(\beta-1)/2)}\mathds{1}_{|B|\geq (\beta-1)/2})(1+\|x\|^{|B|})\\
& \times \prod_{C\neq B\in \pi} (1+t^{-(|C|-\beta/2)}\mathds{1}_{|C|\geq \beta/2})(1+\|x\|^{|C|}) \\
& \leq C_j\log(\epsilon^{-1})^{C_2(1+\gamma)}e^{-t}(1+t^{-(\gamma-(\beta-1)/2)}\mathds{1}_{\gamma\geq (\beta-1)/2})(1+\|x\|^{\gamma}).
\end{align*}
Recalling the definitions of $A_t^\epsilon$ \eqref{eq:aepstsmall} and \eqref{eq:fhosijsbdcbjksds}, we have that for $x\in A_t^\epsilon$
$$\|x\|\leq C\log(\epsilon^{-1})^{C_2}.$$
Therefore, we conclude that for $x\in A_t^\epsilon$,
\begin{equation*}
\|s(\cdot,x)+x\|_{ \mathcal{H}^{\gamma}([t,\infty))}\leq C_\gamma\log(\epsilon^{-1})^{C_2(1+\gamma)}e^{-t}\left(1+t^{-\big((\frac{1}{2}+\gamma-\frac{\beta}{2})\vee 0}\big)\right).
\end{equation*}  
The same way, we obtain
that 
\begin{equation*}\|\partial_t^k \big(s(t,\cdot)+\text{Id}(\cdot)\big)\|_{\mathcal{H}^{\gamma}(A_t^\epsilon)}\leq C_{k,\gamma}\log(\epsilon^{-1})^{C_2(1+k+\gamma)}e^{-t}\left(1+t^{-\big((\frac{1}{2}+k+\frac{\gamma-\beta}{2})\vee 0\big)}\right).
\end{equation*}
\end{proof}

\subsubsection{Proof of Corollary \ref{coro:approxnn}}\label{sec:coro:approxnn}

\begin{proof}

Let us divide the interval $[\epsilon^{\theta},\epsilon^2]$ into $[t_0,t_1], [t_1,t_2],...[t_{m-1},t_m]$ with $t_0=\epsilon^{\theta}$, $t_{i+1}=2t_i$ for $i\in \{0,...,m-2\}$ and $t_m=\epsilon^2.$ In particular we have $m\leq C_\theta \log(\epsilon^{-1})$. We are going to approximate $s(t_i,\cdot)$ and its time-derivatives for all $i\in \{0,...,m\}$ and then use a Taylor approximation in time to define a neural network on the whole interval $[t_i,t_{i+1}]$. Finally we define a neural network on the whole space $[\epsilon^\theta,\epsilon^2]\times \mathbb{R}^d$ by interpolating between the different approximation on each sub-interval.

\textbf{Approximation with polynomials of neural nets}\\
Taking $$\nu = \epsilon^{2(\beta+\theta)},$$
recall from \eqref{eq:hgfjkzudijdjhdhdhhdhd} that for all $t\leq \epsilon^{\theta}$ we have 
$$A_t^\nu=A_{\epsilon^2}^\nu.$$
Let us fix $R>0$ such that $A_{\epsilon^2}^\nu\subset B(0,\log(\epsilon^{-1})^R)$. For $i\in \{0,...,m\}$ and $k\in \{0,...,\lfloor \beta/2 \rfloor\}$, let $\mathcal{F}_i^{k}$ be the class of neural networks (given by either Theorem 5.1 in \cite{de2021approximation} or Theorem 5 in \cite{schmidt2020nonparametric}) with an architecture of depth $O(1)$, width $O(\epsilon^{-d})$ and weights bounded by $O(\epsilon^{-C_2})$ which satisfies that for all $f\in \mathcal{H}^{\beta-2k}_{C\log(\epsilon^{-1})^{C_2}t_i^{-\frac{1}{2}}}(B(0,\log(\epsilon^{-1})^R),\mathbb{R}^{d})$, there exists $N_f\in \mathcal{F}_{i}^k$ satisfying
$$\sup_{x\in B(0,\log(\epsilon^{-1})^R)}\|f-N_f\|\leq  C\log(\epsilon^{-1})^{C_2}\frac{\epsilon^{\beta-2k}}{t_i^{\frac{1}{2}}}.$$
Then from the space-regularity of the derivatives of the score \eqref{eq:reguscorespace}, we have that there exists $N_i^k\in \mathcal{F}_i^k$ such that 
\begin{equation}
    \sup_{x\in A^{\nu}_{\epsilon^2}} \|N_i^k(x)-\partial_t^k s(t_i,x)\|\leq C \log(\epsilon^{-1})^{C_2} \frac{\epsilon^{\beta-2k}}{t_i^{\frac{1}{2}}}.
\end{equation}
For $l\geq 0$, we define $P^{\lfloor l\rfloor}$ the $\lfloor l\rfloor$-th Taylor approximation which given an input $(a_k)_{k=0}^{\lfloor l\rfloor}$, outputs the polynomial $$P^{\lfloor l\rfloor}\big((a_k)_{k=0}^{\lfloor l\rfloor}\big)(t) = \sum_{k=0}^{\lfloor l\rfloor} a_k\frac{(t-t_i)^k}{k!}.$$ 
In particular, for a function $f\in \mathcal{H}^{l}_B$ we have
$$|f(t)-P^{\lfloor l\rfloor}\big((f^{(k)}(t_i))_{k=0}^{\lfloor l\rfloor}\big)(t)|\leq B|t-t_i|^{l}.$$
Now, for $t\in [t_i/2,2t_{i+1}]$, we define the Taylor approximation with respect to time of $s(\cdot,x)$ by
\begin{align}
    s_i(t,x) = P^{\lfloor \frac{\beta+1}{2}\rfloor}\big((N_i^k(x))_{k=0}^{\lfloor \frac{\beta+1}{2}\rfloor} \big)(t).
\end{align}
Then, from the time-regularity of the score \eqref{eq:reguscoretime} with $\gamma=\beta/2$, we have
\begin{align*}
       \|s(t,x)-s_i(t,x)\|  \leq & \|s(t,x)-P^{\lfloor \frac{\beta+1}{2}\rfloor}\big(\partial_t^k s(t_i,x))_{k=0}^{\lfloor \frac{\beta+1}{2}\rfloor }\big)(t)\|\\
        &+ \|P^{\lfloor \frac{\beta+1}{2}\rfloor}\big(\partial_t^k s(t_i,x))_{k=0}^{\lfloor \frac{\beta+1}{2}\rfloor }\big)(t)-s_i(t,x)\|\\
    \leq & C\log(\epsilon^{-1})^{C_2}\left(\frac{(t_{i+1}-t_i)^{\beta/2}}{t_i^{\frac{1}{2}}}+ \sum_{k=0}^{\lfloor \frac{\beta+1}{2}\rfloor}t_i^{-1/2}\epsilon^{\beta-2k}(t_{i+1}-t_i)^{k}\right)
    \\
    \leq & C\log(\epsilon^{-1})^{C_2} \frac{\epsilon^{\beta}}{t^{\frac{1}{2}}}.
\end{align*}
\textbf{Construction of the final neural net}\\
We know from Corollary
3.7 in \cite{de2021approximation} that there exists a tanh neural network $P_\epsilon :\mathbb{R}^{d(\lfloor \frac{\beta+1}{2}\rfloor)+1}\rightarrow\mathbb{R}$ (or Lemma A.3 in \cite{schmidt2020nonparametric} for ReLU) with depth  $O(1)$, width $O(1)$ and weights bounded by $O(\epsilon^{-C_\theta})$ such that
\begin{equation}\label{eq:nnaproxpoly}
\sup_{y\in [-C\epsilon^{-2\theta},C\epsilon^{-2\theta}]^{d\times\lfloor \frac{\beta+1}{2}\rfloor},t \in [0,1]}\| P_\epsilon(t,y)- P^{\lfloor \frac{\beta+1}{2}\rfloor}\big((y_k)_{k=0}^{\lfloor \frac{\beta+1}{2}\rfloor} \big)(t)\|\leq \epsilon^{\beta}.
\end{equation}
Define the functions \begin{align}
    \delta_i(t) := \left\{\begin{array}{ll} 1 & \text{ if } t\in [t_i,t_{i+1}]\\
    1-2t_i^{-1}(|t-t_i|\wedge |t-t_{i+1}|) & \text{ if } t\in [t_i/2,t_i]\cup [t_{i+1},t_{i+1}+t_i/2]\\
    0 & \text{ if } t\notin [t_i/2,t_{i+1}+t_i/2],
    \end{array}\right.
\end{align}
and
\begin{equation}\label{eq:xiii}
    \xi_i(t):=\frac{\delta_i(t)}{\sum_{j=0}^{m} \delta_j(t)},
\end{equation}
allowing to interpolate between the different approximations of $s(t,\cdot)$. 
Then, similarly to \eqref{eq:nnaproxpoly}, there exists a neural network $\bar{s}_i$ such that
$$
\sup_{x\in A_t^{\nu},t \in [0,1]}\|\bar{s}_i(t,x) - P_\epsilon(t,(N_i^k(x))_{k=0}^{\lfloor \frac{\beta+1}{2}\rfloor})\xi_i(t)\|\leq \epsilon^{\beta},
$$
and 
$$\sup_{x\in \mathbb{R}^d,t \in [0,1]} \|\bar{s}_i(t,x)\|\leq C\sup_{t\in [\epsilon^\theta,\epsilon^2], x\in A_t^{\nu}} \|s(t,x)\|, $$
having an architecture of depth $O(1)$, width $O(\epsilon^{-d})$ and weights bounded by $O(\epsilon^{-C_\theta})$.
Therefore, the neural network $$s_\epsilon(t,x)= \sum_{i=0}^m \bar{s}_i(t,x),$$
belongs to the class $\mathcal{F}$ composed of  neural networks having an architecture of depth $O(1)$, width $O(\log(\epsilon^{-C_\theta})\epsilon^{-d})$ and weights bounded by $O(\epsilon^{-C_\theta})$. In particular, the $\eta$-covering number of  $\mathcal{F}$ satisfies
\begin{align*}
    \log (\mathcal{N}(\mathcal{F},\|\cdot\|_\infty,\eta))\leq C_\theta \epsilon^{-d}\log((\epsilon\eta)^{-1})\log(\epsilon^{-1}).
\end{align*}
Furthermore, for $t\in[\epsilon^\theta,\epsilon^2]$ and $x\in A_t^{\nu}$ we have
\begin{align*}
    \|s(t,x)-s_\epsilon(t,x)\|\leq & \|s(t,x)-\sum_{i=0}^m P_\epsilon(t,(N_i^k(x))_{k=0}^{\lfloor \frac{\beta+1}{2}\rfloor})\xi_i(t)\|+\|\sum_{i=0}^m P_\epsilon(t,(N_i^k(x))_{k=0}^{\lfloor \frac{\beta+1}{2}\rfloor})\xi_i(t)-s_\epsilon(t,x)\|\\
    \leq & \sum_{i=0}^m \xi_i(t) \|s(t,x)-P_\epsilon(t,(N_i^k(x))_{k=0}^{\lfloor \frac{\beta+1}{2}\rfloor})\|+ \sum_{i=0}^m \| P_\epsilon(t,(N_i^k(x))_{k=0}^{\lfloor \frac{\beta+1}{2}\rfloor})\xi_i(t)-\bar{s}_i(t,x)\|\\
    \leq & C \log(\epsilon^{-1})^{C_2} \frac{\epsilon^{\beta}}{t}.
\end{align*}
Now, as $p_t$ is sub-Gaussian, we obtain for all $t\in [\epsilon^\theta,\epsilon^2]$ and $L>0$ that
\begin{align}\label{align:envrailuicestledernier}
\int \|s_\epsilon(x)-s(t,x)\|^2dp_t(x)= &\int \|s_\epsilon(t,x)-s(t,x)\|^2(\mathds{1}_{x\in A^{\nu}_t}+\mathds{1}_{x\notin A^{\nu}_t})dp_t(x)\nonumber\\
\leq & C \log(\epsilon^{-1})^{C_2}\frac{\epsilon^{2\beta}}{t}+\int \|s_\epsilon(t,x)-s(t,x)\|^2\mathds{1}_{x\notin A^{\nu}_t ,\|x\|\leq L\log(\epsilon^{-1}) }dp_t(x)\nonumber\\ 
&+ \int \|s_\epsilon(t,x)-s(t,x)\|^2\mathds{1}_{|x\|\geq L\log(\epsilon^{-1}) }dp_t(x)\nonumber\\
\leq & C_\theta \log(\epsilon^{-1})^{C_2} \frac{\epsilon^{2\beta}}{t},
\end{align}
taking $L>\theta$ large enough and recalling that $\nu = \epsilon^{2(\beta+\theta)}$ and $\|s(t,x)\|\leq Ct^{-1}(\|x\|+1).$

\end{proof}

\subsubsection{Proof of Corollary \ref{coro:approxnn2}}\label{sec:coro:approxnn2}
\begin{proof}
The proof follows the proof of Corollary \ref{coro:approxnn} with an additional time discretization.

Let us divide the interval $[\epsilon^2,\log(\epsilon^{-1})]$ into $[t_0,t_1], [t_1,t_2],...[t_{m-1},t_m]$ with $t_0=\epsilon^{2}$, $t_{i+1}=2t_i$ for $i\in \{0,...,m-2\}$ and $t_m=\log(\epsilon^{-1}).$
Additionally, taking
$$t_{i,j}=t_i+\frac{j}{L}(t_{i+1}-t_i) \text{ for } j\in \{0,...,L\}, \text{ and } L= \lfloor \epsilon^{-\delta/2}\rfloor,$$ 
we subdivide each interval $[t_i,t_{i+1}]$ into  $[t_{i,0},t_{i,1}], ...,[t_{i,L-1},t_{i,L}]$ with $t_{i,0}=t_i$ and $t_{i,L}=t_{i+1}$. 

\textbf{Approximation with polynomials of neural nets}\\
Let 
$$\gamma =2d\delta^{-1} \text{ and } \gamma_2=2\delta^{-1}(\beta+1),$$
and fix $R>0$ such that for all $t\geq \epsilon^{2}$ and $\nu = (\epsilon^{2(\beta+2)})\wedge 1/4$, we have $A^{\nu}_t\subset B(0,\log(\epsilon^{-1})^R)$. For $i\in \{0,...,m\}$ and $k\in \{0,...,\lfloor \gamma_2 \rfloor\}$, let $\mathcal{F}_i^{k}$ be the class of neural networks (given by either Theorem 5.1 in \cite{de2021approximation} or Theorem 5 in \cite{schmidt2020nonparametric}) with an architecture of depth $O(1)$, width $O(\epsilon^{-d-\delta/2})$ and weights bounded by $O(\epsilon^{-C_2})$ which satisfies that for all $f\in \mathcal{H}^{\beta+\gamma}_{C\log(\epsilon^{-1})^{C_2}t_i^{-(\frac{1}{2}(1+\gamma)+k)}}(B(0,\log(\epsilon^{-1})^R),\mathbb{R}^{d})$, there exists $N_f\in \mathcal{F}_{i}^k$ satisfying
$$\sup_{x\in B(0,\log(\epsilon^{-1})^R)}\|f-N_f\|\leq  C_\delta\log(\epsilon^{-1})^{C_2}\frac{\epsilon^{(1+\delta/(2d))(\beta+\gamma)}}{t_i^{\frac{1}{2}(1+\gamma)+k)}}.$$

From \eqref{eq:reguscorespace} we have that for all $k\in \mathbb{N}$, there exists $N_{i,j}^k\in \mathcal{F}_i^k$ such that
$$\sup_{x\in A_{t_{i,j}/2}^{\nu}}\|\partial_t^k s(t_{i,j},x)-N_{i,j}^k(x)\|\leq C_\delta\log(\epsilon^{-1})^{C_2}\frac{\epsilon^{(1+\delta/(2d))(\beta+\gamma)}}{t_i^{\frac{1}{2}(1+\gamma)+k)}}.$$
\paragraph{Exceptional case for certain times \( t_{i,j} \):}  
Recall from equations~\eqref{eq:dkdkoididjudueueueieoz} and~\eqref{eq:hgfjkzudijdjhdhdhhdhd} that the definition of \( A_t^\epsilon \) depends on whether \( t \) is above or below the threshold \( \log(\epsilon^{-1})^{-C^\star} \). For certain indices \( i \), there exist times \( t \in [t_{i,j}/2,\, t_{i,j+1} + t_{i,j}/2] \) such that \( A_{t_{i,j}/2} \neq A_t \). In this case, it must hold that \( t_i \geq \log(\epsilon^{-1})^{-C^\star}/8 \).

To avoid issues caused by the change in the support set \( A_t^\epsilon \) within the interval \( [t_{i,j}/2,\, t_{i,j+1} + t_{i,j}/2] \), we instead choose the approximating network \( N_{i,j}^k \) to satisfy the bound:
\[
\sup_{x \in A_\infty^\nu} \left\| \partial_t^k s(t_{i,j}, x) - N_{i,j}^k(x) \right\| \leq C_\delta \log(\epsilon^{-1})^{C_2} \frac{\epsilon^{(1 + \delta/(2d))(\beta + \gamma)}}{t_i^{\frac{1}{2}(1 + \gamma) + k}}.
\]

This is valid because as $t$ is sufficiently large, we retain the same regularity estimates (up to multiplicative constants) on the set \( A_\infty^\nu \), thanks to Proposition~\ref{prop:fjidididjhdhzpmwjf}, which states that for $x\in A_\infty^\epsilon$
\[
\left\| \nabla^k \left(s(t, \cdot) + \mathrm{Id}\right)(x) \right\| \leq C \log(\epsilon^{-1})^{C_2(1 + k)} \frac{e^{-(k+1)t}}{(1 - e^{-2t})^{k+1}}.
\]

Therefore, in the remainder of the analysis, we adopt the convention that \( A^\nu_{t_{i,j}/2} = A^\nu_\infty \) whenever \( t_i \geq \log(\epsilon^{-1})^{-C^\star}/8 \).

\textbf{Back to the approximation by polynomials of neural networks: }Defining for $t\in [t_{i,j},t_{i,j+1}]$:
\begin{align*}
    s_N(t,x) = P^{\lfloor \frac{\beta}{2} +\gamma_2\rfloor}\big((N_{i,j}^k(x))_{k=0}^{\lfloor \frac{\beta}{2}+\gamma_2\rfloor} \big)(t),
\end{align*}
we have from \eqref{eq:reguscoretime} that
\begin{align*}
    &\|s(t,x)-s_N(t,x)\|  \\
    & \leq \|s(t,x)-P^{\lfloor\frac{\beta}{2} +\gamma_2\rfloor}\big((\partial_t^k s(t_{i,j},x))_{k=0}^{\lfloor\frac{\beta}{2} +\gamma_2\rfloor}\big)(t)\| + \|P^{\lfloor\frac{\beta}{2} +\gamma_2\rfloor}\big((\partial_t^k s(t_{i,j},x))_{k=0}^{\lfloor\frac{\beta}{2} +\gamma_2\rfloor}\big)(t)-s_N(t,x)\|\\
    & \leq C_\delta\log(\epsilon^{-1})^{C_2} t_i^{-(1/2+\gamma_2)}(t_{i,j+1}-t_{i,j})^{\beta/2+\gamma_2}+C\|\sum_{k=0}^{\lfloor\frac{\beta}{2} +\gamma_2\rfloor} (\partial_t^k s(t_{i,j},x)-N_{i,j}^k(x))(t-t_{i,j})^k\|\\
    & \leq C_\delta\log(\epsilon^{-1})^{C_2}\left( t_i^{-(1/2+\gamma_2)}(t_i\epsilon^{\delta/2})^{\beta/2+\gamma_2}+\sum_{k=0}^{\lfloor\frac{\beta}{2} +\gamma_2\rfloor}\epsilon^{(1+\delta/(2d))(\beta+\gamma)}t_i^{-(\frac{1}{2}(1+\gamma)+k)}(t_i\epsilon^{\delta/2})^k\right)
    \\
    & \leq C_\delta\log(\epsilon^{-1})^{C_2}\left(t_i^{-1/2}\epsilon^{\beta+1}+\epsilon^{(1+\delta/(2d))(\beta+\gamma)}t_i^{-\frac{1}{2}(1+\gamma)}\right)\\
    & \leq C_\delta\log(\epsilon^{-1})^{C_2}t_i^{-1/2}\epsilon^{\beta+1}\left(1+\epsilon^{(\beta+\gamma)\delta/(2d)-1}\epsilon^\gamma t_i^{-\gamma/2}\right)\\
    & \leq C_\delta\log(\epsilon^{-1})^{C_2}t_i^{-1/2}\epsilon^{\beta+1}.
\end{align*}

\textbf{Construction of the final neural net}\\
This paragraph closely follows the one of the proof of Corollary \ref{coro:approxnn} in Section \ref{sec:coro:approxnn}. Define interpolation functions $\xi_{i,j}$ 
\begin{equation}\label{eq:xiii2}
    \xi_{i,j}(t):=\frac{\delta_{i,j}(t)}{\sum_{k,l} \delta_{k,l}(t)},
\end{equation}
with \begin{align}
    \delta_{i,j}(t) := \left\{\begin{array}{ll} 1 & \text{ if } t\in [t_{i,j},t_{i,j+1}]\\
    1-2t_{i,j}^{-1}(|t-t_{i,j}|\wedge |t-t_{i,j+1}|) & \text{ if } t\in [t_{i,j}/2,t_{i,j}]\cup [t_{i,j+1},t_{i,j+1}+t_{i,j}/2]\\
    0 & \text{ if } t\notin [t_{i,j}/2,t_{i,j}]\cup [t_{i,j+1},t_{i,j+1}+t_{i,j}/2].
    \end{array}\right.
\end{align}
Then, similarly to \eqref{eq:xiii}, we have that there exists a neural network $\bar{s}_{i,j}$ such that
$$
\sup_{x\in A_t^{\nu},t \in [0,1]}\|\bar{s}_{i,j}(t,x) - P(t,(N_i^k(x))_{k=0}^{\lfloor \frac{\beta}{2}+\gamma_2\rfloor})\xi_{i,j}(t)\|\leq \epsilon^{\beta+1},
$$
and 
$$\sup_{x\in \mathbb{R}^d,t \in [0,1]} \|\bar{s}_i(t,x)\|\leq C\sup_{t\in [\epsilon^2,\log(\epsilon^{-1})], x\in A_t^{\nu}} \|s(t,x)\|, $$
having an architecture of depth $O(1)$, width $O(\epsilon^{-d-\delta/2})$ and weights bounded by $O(\epsilon^{-C_\delta})$.
Therefore, the neural network $$s_\epsilon(t,x)= \sum_{i=0}^m\sum_{j=0}^{L} \bar{s}_{i,}(t,x),$$
belongs to the class $\mathcal{F}$ composed of  neural networks having an architecture of depth $O(1)$, width $O(\log(\epsilon^{-1})\epsilon^{-d-\delta})$ and weights bounded by $O(\epsilon^{-C_\delta})$. In particular, the $\eta$-covering number of  $\mathcal{F}$ satisfies
\begin{align*}
    \log (\mathcal{N}(\mathcal{F},\|\cdot\|_\infty,\eta))\leq C_\theta \epsilon^{-d-\delta}\log((\epsilon\eta)^{-1})\log(\epsilon^{-1}).
\end{align*}
Now, doing as \eqref{align:envrailuicestledernier}, we obtain for all $t\in [\epsilon^2,\log(\epsilon^{-1})]$
\begin{align*}
    \int \|s_\epsilon(t,x)-s(t,x)\|^2dp_t(x)\leq & C_\delta \log(\epsilon^{-1})^{C_2} \frac{\epsilon^{2(\beta+1)}}{t}.
\end{align*}
\end{proof}

\end{document}